\documentclass{article}
\usepackage{amsfonts}
\usepackage{amsmath}
\usepackage{amssymb}
\usepackage{amsthm}
\usepackage{graphicx}
\usepackage{braket}
\usepackage{typearea}
\usepackage{fullpage}
\usepackage[matrix,arrow,curve]{xy}
\usepackage{tikz}
\usetikzlibrary{arrows,shapes,trees,calc,through}

\theoremstyle{definition}
\newtheorem{definition}{Definition}
\newtheorem{lemma}[definition]{Lemma}
\newtheorem{theorem}[definition]{Theorem}

\newtheorem{corollary}[definition]{Corollary}
\newtheorem{example}[definition]{Example}

\newenvironment{@abssec}[1]{%
     \if@twocolumn
       \section*{#1}%
     \else
       \vspace{.05in}\footnotesize
       \parindent .2in
         {\bfseries #1. }\ignorespaces
     \fi}
     {\if@twocolumn\else\par\vspace{.1in}\fi}

\def\eqbd{\mathop{{:}{=}}}

\def\eqbd{\mathop{{:}{=}}}
\def\bdeq{\mathop{{=}{:}}}

\def\Re{\mathop{\rm Re}\nolimits}
\def\openC{{\rm C\kern-.48em\vrule width.06em height.6em depth-.02em
                 \kern.48em}}
\def\openQ{{{\rm Q\kern-.21cm\vrule width.6pt height 6.2ptdepth-.2pt \kern.21cm}}}
\def\openR{{{\rm I}\kern-.16em {\rm R}}}
\def\openZ{{{\rm Z}\kern-.28em{\rm Z}}}
\def\openT{{{\rm T}\kern-.42em {\rm T}}}
\def\openH{{{\rm I}\kern-.16em {\rm H}}}
\def\openK{{{\rm I}\kern-.16em {\rm K}}}
\def\openL{{{\rm I}\kern-.16em {\rm L}}}
\def\openM{{{\rm I}\kern-.16em {\rm M}}}
\def\openN{{{\rm I}\kern-.16em {\rm N}}}
\def\openP{{{\rm I}\kern-.16em {\rm P}}}

\def\eqbd{\mathop{{:}{=}}}

\let\C\openC

\def\proofof#1{\noindent {\bf Proof of #1. \/}}
\def\eop{\hfill
        {\ \vbox{\hrule\hbox{\vrule height1.3ex\hskip1.4ex\vrule}\hrule}}
        \vskip 0.3cm \par}

\def\belowrightarrow#1{{{{}\over\ #1\ }\kern-1.1em\to}}
\def\l2{{L_2}}

\def\eqbd{\mathop{{:}{=}}}
\def\bdeq{\mathop{{=}{:}}}

\def\Re{\mathop{\rm Re}\nolimits}

\def\eqbd{\mathop{:}{=}}

\def\belowrightarrow#1{{{{}\over\ #1\ }\kern-1.1em\to}}
\def\l2{{L_2}}

\begin{document}

\title{ Generalized Hurwitz matrices, generalized Euclidean algorithm, and forbidden
sectors of the complex plane}

\author{ Olga Holtz\footnotemark[1] \footnotemark[2] ,
Sergey Khrushchev\footnotemark[3], and Olga Kushel\footnotemark[4]}

\renewcommand{\thefootnote}{\fnsymbol{footnote}}

\footnotetext[1]{
Department of Mathematics MA 4-2,
Technische Universit\"at Berlin,
Strasse des 17. Juni 136,
D-10623 Berlin,
Germany.}
\footnotetext[2]{
Department of Mathematics,
  University of California-Berkeley,
      821 Evans Hall,
      Berkeley, California, 94720.
Telephone: +1 510 642 2122 
Fax: +1 510 642 8204
Email: holtz@math.berkeley.edu
}
\footnotetext[3]{
International School of Economics,
Kazakh-British University,
Tole bi 59, 050000
Almaty, Kazakhstan.
}
\footnotetext[4]{
Department of Mathematics,
Shanghai Jiao Tong University,
Shanghai 200240, China.
}

\date{\small June 23, 2015}
\maketitle

\begin{abstract} \noindent
Given a polynomial \[ f(x)=a_0x^n+a_1x^{n-1}+\cdots +a_n \] with positive coefficients $a_k$, and a positive integer $M\leq n$, we define a(n infinite) generalized Hurwitz matrix $H_M(f)\eqbd (a_{Mj-i})_{i,j}$. We prove that  the polynomial $f(z)$ does not vanish in the sector 
$$ \left\{z\in\mathbb{C}: |\arg (z)| < \frac{\pi}{M}\right\} $$
whenever the matrix $H_M$ is totally nonnegative. This result generalizes the classical Hurwitz' Theorem on stable polynomials ($M=2$), the Aissen-Edrei-Schoenberg-Whitney theorem on polynomials with negative real roots ($M=1$), and the Cowling-Thron theorem ($M=n$).  In this connection, we also develop a generalization of the classical Euclidean algorithm, of independent interest \emph{per se}.
\end{abstract}


\section*{Introduction}
The problem of determining the number of zeros of a polynomial in a given region of the complex plane is very classical and goes back to Descartes, Gauss,  Cauchy \cite{CA}, Routh \cite{RA1,RA2}, Hermite \cite{HEM}, Hurwitz~\cite{HU}, and many others. The entire second volume of the delightful \emph{Problems and Theorems in Analysis} by P\'olya and Szeg\H{o} \cite{PSII} 
is devoted to this and related problems. See also comprehensive monographs of Marden \cite{MA}, Obreshkoff \cite{OB}, and Fisk \cite{FISK}.

One particularly famous late-19th-century result, which also has numerous applications, is the Routh-Hurwitz criterion of stability. Recall that a polynomial is called \emph{stable} if all its zeros lie in the open left half-plane of the complex plane. The Routh-Hurwitz criterion asserts the following:
\begin{theorem}[Routh-Hurwitz \cite{HU,RA1,RA2}]\label{HurwitzTheorem}
A real polynomial $f(x) = a_0x^n + a_1x^{n-1}+ \cdots + a_n$ $(a_0 > 0)$ is stable if and only if all leading principal minors of its Hurwitz matrix $H_2(f)$ up to order $n$ are positive.
\end{theorem}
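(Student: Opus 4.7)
The plan is to prove Theorem \ref{HurwitzTheorem} by reducing stability to a Hermite--Biehler type interlacing condition on the even and odd parts of $f$, and then translating that interlacing condition into positivity of the leading principal minors of $H_2(f)$ via a continued fraction / Euclidean-algorithm expansion.

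First, I would split $f$ by the parity of the indices of its coefficients: write $f(z)=h(z^2)+z\,g(z^2)$, and substitute $z=\mathrm{i}y$ to obtain $f(\mathrm{i}y)=h(-y^2)+\mathrm{i}y\,g(-y^2)$. The classical Hermite--Biehler theorem (which I would invoke) states that $f$ is stable if and only if $h(-y^2)$ and $y\,g(-y^2)$ have only real simple interlacing zeros with matching leading-coefficient signs, or equivalently, the real rational function $R(y):=h(-y^2)/\bigl(y\,g(-y^2)\bigr)$ maps the upper half-plane into the lower half-plane (a Herglotz/Nevanlinna property). So the first step reduces the problem to verifying that $R$ is Herglotz.

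Next, I would develop a Stieltjes-type continued fraction expansion of $R$. Since $R$ is a real rational function, running the Euclidean algorithm on the pair $(h, y g)$ in a suitably scaled form yields
\[
R(y) \;=\; c_1 y + \cfrac{1}{c_2 y + \cfrac{1}{c_3 y + \cfrac{1}{\ddots + \cfrac{1}{c_n y}}}},
\]
where the partial quotients $c_k$ are the pivots of the successive polynomial divisions. A standard (and elementary) fact about $S$-fractions is that $R$ is Herglotz if and only if all $c_k>0$. So stability of $f$ is equivalent to positivity of every $c_k$.

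The key determinantal step, and the one I expect to be the main technical obstacle, is to identify each $c_k$ with a ratio of leading principal minors of $H_2(f)$. One shows that
\[
c_k \;=\; \frac{\Delta_k \Delta_{k-2}}{\Delta_{k-1}^{2}}, \qquad \Delta_0:=1,
\]
where $\Delta_k$ denotes the $k$th leading principal minor of $H_2(f)$. This is obtained by expressing the entries produced at the $k$th step of the Euclidean algorithm (equivalently, the $k$th row of the Routh tableau) as bordered minors of $H_2(f)$ and applying a Sylvester/Jacobi identity. Once this identity is in place, a trivial induction shows that the $c_k$ are all positive if and only if $\Delta_1,\ldots,\Delta_n$ are all positive, which combined with the Hermite--Biehler reduction gives the theorem.

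The hard part is the bookkeeping in the determinantal identity linking the Routh pivots to the Hurwitz minors: the indices must be lined up correctly across the successive division steps, and one has to verify that the generic (nondegenerate) case of no vanishing intermediate pivot genuinely corresponds to all $\Delta_k>0$, with the degenerate case (some $\Delta_k=0$) precluding stability. Everything else in the argument is a chain of standard equivalences once the Hermite--Biehler reduction and the continued-fraction/minor correspondence are established.
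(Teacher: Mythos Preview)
The paper does not prove Theorem~\ref{HurwitzTheorem}; it is stated in the Introduction as the classical Routh--Hurwitz criterion, with references to the original sources \cite{HU,RA1,RA2}, and is then used as a known tool (e.g., in Lemma~\ref{degreeformula} and Section~\ref{sec:obstacles}). So there is no ``paper's own proof'' to compare against.

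That said, your outline is a correct and standard route, essentially the Hermite--Biehler/continued-fraction proof (cf.\ \cite{HOL,HT,Gantmacher}). In fact the paper's general-$M$ machinery specializes at $M=2$ to supply exactly the determinantal bookkeeping you flag as the hard part: Lemma~\ref{lem31} and Corollary~\ref{cor31} give $h_k=\Delta_k/\Delta_{k-1}$ for the leading coefficients produced by the Euclidean algorithm, and the factorization Theorem~\ref{thm:factor} packages the Routh recursion. One small convention check: with those normalizations the partial quotients are $c_k=h_{k-1}/h_k=\Delta_{k-1}^{2}/(\Delta_k\Delta_{k-2})$, the reciprocal of what you wrote; this is harmless for the positivity equivalence, but worth aligning before you do the induction. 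Also, a Herglotz/Nevanlinna function by the usual definition maps the upper half-plane to itself, not to the lower half-plane; your sign convention for $R$ should be fixed accordingly (or call it an anti-Herglotz function).
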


Decades after Routh-Hurwitz, Asner \cite{ASN} and Kemperman \cite{KEM} independently realized that the Routh-Hurwitz criterion can be restated in terms of the total nonnegativity of the Hurwitz matrix. Moreover,  the Hurwitz matrix of a stable polynomial admits a simple factorization into totally nonnegative factors \cite{HOL}.  These developments are described in \cite[Section 4.11]{PINK}; see 
also a separate section \cite[Section 4.8]{PINK} on generalized Hurwitz matrices. The converse direction of the total nonnegativity criterion was fully established only a few years ago in \cite{HT}:
\begin{theorem}[\cite{HT}]\label{HurwitzTheoremTN}
A polynomial $f(x) = a_0x^n + a_1x^{n-1}+ \cdots + a_n$ $(a_0,  a_1,  \ldots,  a_n \in {\mathbb R}; \ a_0 > 0)$ has no zeros in the open right half-plane $\Re z> 0$ if and only if its Hurwitz matrix $H_2(f)$ is totally nonnegative.
\end{theorem}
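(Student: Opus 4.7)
I would treat the two implications separately, and for both reduce to the classical Routh--Hurwitz criterion (Theorem~\ref{HurwitzTheorem}) via perturbation.

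\emph{Necessity.} Assume $f$ has no zeros in the open right half-plane. The Asner--Kemperman reformulation of Routh--Hurwitz asserts that a strictly stable polynomial has a totally nonnegative Hurwitz matrix. Given $f$, set $f_\epsilon(x) := f(x-\epsilon)$ for $\epsilon > 0$; this shifts every root strictly into the open left half-plane, so $f_\epsilon$ is strictly stable and $H_2(f_\epsilon)$ is TN. As $\epsilon \to 0^+$ the coefficients of $f_\epsilon$ depend polynomially (hence continuously) on $\epsilon$ and converge to those of $f$. Every minor of $H_2(f_\epsilon)$ is a polynomial in its entries, so the inequality ``minor $\ge 0$'' is preserved in the limit, and $H_2(f)$ is TN.

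\emph{Sufficiency.} Assume $H_2(f)$ is TN. The strategy is to approximate $f$ by strictly stable polynomials and then use continuity of roots. Decompose $f(z) = p(z^2) + z\,q(z^2)$; by the Hermite--Biehler theorem the stability of $f$ is encoded in a Stieltjes-type continued fraction expansion of the rational function $p/q$ (or $q/p$), and strict stability is equivalent to all partial quotients being strictly positive. The Routh table---essentially a Euclidean-style algorithm applied to $p$ and $q$---computes these partial quotients as ratios of consecutive leading principal minors of $H_2(f)$, so total nonnegativity of $H_2(f)$ translates into nonnegativity of all partial quotients. Replacing each partial quotient $c_k$ by $c_k + \epsilon_k$ with $\epsilon_k > 0$ produces a strictly stable polynomial $\tilde f_\epsilon$ whose coefficients tend to those of $f$ as $\epsilon \to 0$. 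Since polynomial roots depend continuously on coefficients, no root of $f$ can lie strictly to the right of the imaginary axis.

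\emph{Main obstacle.} The delicate point is the degenerate case in which some leading Hurwitz minor vanishes. Then the Euclidean algorithm on $(p,q)$ terminates prematurely: $p$ and $q$ share a non-trivial common factor, whose roots are precisely the imaginary-axis roots of $f$. To handle this I would split off the imaginary-axis part, writing $f = g\cdot h$ with $h$ a real polynomial whose roots are exactly the imaginary-axis roots of $f$ (with multiplicity) and $g$ free of such roots; then deduce from the TN of $H_2(f)$ a corresponding TN property of $H_2(g)$ via a factorization of Hurwitz matrices under polynomial multiplication, apply the clean argument above to $g$, and recombine. Making this factorization step rigorous---in particular, matching the vanishing minors of $H_2(f)$ to the imaginary-axis multiplicities of $f$---will be the heart of the proof.
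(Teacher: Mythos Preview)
The paper does not prove Theorem~\ref{HurwitzTheoremTN}; it is quoted from \cite{HT} as background, so there is no in-paper argument to compare against. I can only assess your plan on its own terms.

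Your necessity argument is essentially correct, modulo a sign slip: $f_\epsilon(x):=f(x-\epsilon)$ shifts the roots of $f$ to the \emph{right} by $\epsilon$, not to the left. You want $f_\epsilon(x):=f(x+\epsilon)$, whose roots are those of $f$ translated by $-\epsilon$; then the Asner--Kemperman step and the limit go through as you describe.

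The sufficiency direction has a real gap, and it is precisely the degenerate case you flag. Your proposed fix is to write $f=gh$ with $h$ collecting the imaginary-axis roots and then ``deduce from the TN of $H_2(f)$ a corresponding TN property of $H_2(g)$.'' Two problems. First, this presupposes you already know where the roots of $f$ sit: the common factor $d=\gcd(p,q)$ produced by the aborted Euclidean algorithm gives $f(z)=d(z^2)\,g(z)$, but $d(z^2)$ has purely imaginary zeros only if $d$ has only real nonpositive roots --- and establishing \emph{that} from the TN hypothesis is essentially the content of the theorem you are trying to prove. Second, even granting a factorization $f=gh$ with $h$ known, total nonnegativity does not descend from $H_2(gh)$ to $H_2(g)$: the Hurwitz matrix of a product does admit a matrix factorization (cf.\ \cite{HOL}), but TN of a product never implies TN of its factors. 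So the ``factorization step'' you identify as the heart of the proof is not a technicality to be filled in; it is the whole difficulty, and your outline gives no mechanism for it.

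The argument actually carried out in \cite{HT} sidesteps this circularity by working with $R$-functions: total nonnegativity of the infinite matrix $H_2(f)$ is shown to force the associated rational function to map the open upper half-plane into itself, and a Hermite--Biehler-type argument then excludes right-half-plane zeros directly, without ever isolating or factoring off the degenerate part.
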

Note that the strict stability has to be replaced here by its natural weaker counterpart. As we shall discuss in Section~\ref{sec:factorization}, generalized Hurwitz matrices turn out to enjoy analogous properties!  (We should add that Pinkus \cite[Section 4.11]{PINK} mistakenly asserts that the total nonnegativity of the infinite Hurwitz matrix is {\it not} sufficient for all zeros to lie in the closed left half-plane. However, it is the finite, i.e., $n{\times}n$, Hurwitz matrix, whose total nonnegativity is not sufficient.)

Another famous mid-20th-century result of Aissen, Edrei, Schoenberg and Whitney \cite{AESW}  concerns a seemingly different class of polynomials, i.e., those with real negative roots. This result admits a strikingly similar formulation to the total nonnegativity Theorem~\ref{HurwitzTheoremTN}:

\begin{theorem}[\cite{AESW}]\label{AESWTheorem}
A polynomial $f(x) = a_0x^n + a_1x^{n-1}+ \cdots + a_n$ $(a_0,  a_1,  \ldots, a_n \in {\mathbb R}; \ a_0 > 0)$ has only real negative zeros if and only if its Toeplitz matrix $H_1(f)$ is totally nonnegative.
\end{theorem}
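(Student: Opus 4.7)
The plan is to prove the two directions of this equivalence separately.

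\emph{Sufficiency.} Suppose $f(x)=a_0\prod_{k=1}^n(x+r_k)$ with all $r_k>0$. The key observation is that the assignment $f\mapsto H_1(f)$ is multiplicative: for polynomials $f,g$ of any degree, $H_1(fg)=H_1(f)H_1(g)$. Indeed, denoting the reversed (ascending-order) coefficient sequences of $f$ and $g$ by $(a_\ell)$ and $(b_\ell)$, direct computation gives
\[
(H_1(f)H_1(g))_{ij}=\sum_k a_{k-i}b_{j-k}=\sum_\ell a_\ell b_{j-i-\ell},
\]
which is exactly the $(j-i)$-th coefficient of $fg$, i.e., $(H_1(fg))_{ij}$. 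Therefore $H_1(f)=a_0\prod_{k=1}^n H_1(x+r_k)$. Each linear factor $H_1(x+r_k)$ is a bidiagonal infinite matrix with $1$'s on the diagonal and $r_k>0$ on the super-diagonal, hence manifestly totally nonnegative---every nonvanishing minor is a monomial in the positive entries $1$ and $r_k$. Since the product of totally nonnegative matrices is totally nonnegative, $H_1(f)$ is TN.

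\emph{Necessity.} The coefficient sequence $(a_k)$ of $f$, extended by $0$ outside $0\le k\le n$, is by definition a P\'olya frequency sequence precisely when $H_1(f)$ is totally nonnegative. The plan is to invoke the classical Schoenberg--AESW characterization of such sequences: the generating function of any P\'olya frequency sequence admits a representation
\[
\sum_k a_k z^k = a_0\, e^{\gamma z}\frac{\prod_i(1+\alpha_i z)}{\prod_j(1-\beta_j z)},
\]
with $\gamma,\alpha_i,\beta_j\ge 0$ and $\sum_i\alpha_i+\sum_j\beta_j<\infty$. Since our sequence is finitely supported, the exponential factor and all denominator factors must be trivial, so the generating polynomial equals $a_0\prod_i(1+\alpha_i z)$ with $\alpha_i\ge 0$. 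Reversing the substitution $z\leftrightarrow 1/x$ shows that $f(x)=a_0\prod_i(x+\alpha_i)$ has only real nonpositive roots, as required.

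The main obstacle is evidently the necessity direction---ruling out complex or positive real roots from the TN hypothesis alone is the heart of the AESW theorem. A self-contained inductive approach would aim to extract one real negative root at a time, by first showing that TN of $H_1(f)$ forces the existence of such a root, then dividing out by the bidiagonal factor $H_1(x+r)$ (whose inverse preserves TN when applied on the left) and inducting on $\deg f$. The delicate step is the first extraction: pointwise nonnegativity and log-concavity of the coefficients (consequences of the $1\times 1$ and $2\times 2$ minors) are strictly weaker than real-rootedness, so one must exploit nonnegativity of higher-order minors---typically via a variation-diminishing argument for the convolution operator whose matrix is $H_1(f)$.
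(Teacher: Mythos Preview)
The paper does not prove this theorem; it merely quotes it as a classical result from~\cite{AESW}. So there is no ``paper's own proof'' to compare against, and I will simply evaluate your argument on its merits.

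Your sufficiency direction is correct and is the standard argument: the multiplicativity $H_1(fg)=H_1(f)H_1(g)$ together with the observation that each bidiagonal factor $H_1(x+r_k)$ is totally nonnegative yields the conclusion via Cauchy--Binet. No issues here.

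The necessity direction, however, is circular. The ``classical Schoenberg--AESW characterization'' you invoke---the representation of the generating function of a P\'olya frequency sequence as $a_0 e^{\gamma z}\prod(1+\alpha_i z)/\prod(1-\beta_j z)$---\emph{is} the Aissen--Edrei--Schoenberg--Whitney theorem (more precisely, its extension by Edrei to one-sided infinite sequences). The finitely supported case you need is exactly the statement you are asked to prove. So the necessity paragraph is not a proof but a restatement.

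You seem aware of this, since your final paragraph sketches a genuine strategy: extract one real negative root, divide out the bidiagonal factor, and induct. That outline is sound in spirit, but as written it is only a plan; the crucial step---showing that total nonnegativity of $H_1(f)$ forces $f$ to have at least one real nonpositive root---is not carried out, and you correctly note that the $1\times1$ and $2\times2$ minors alone are insufficient. A self-contained proof typically goes through a variation-diminishing property of the Toeplitz operator (as in Schoenberg's original papers) or through a limiting argument from the strictly totally positive case; either route requires real work that is absent here.

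In short: sufficiency is fine; necessity, as written, assumes what it sets out to prove.
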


The reader may wonder why we refer to the Hurwitz and Toeplitz matrices associated to a polynomial $f$ as $H_2(f)$ and $H_1(f)$. As we shall see, both are special cases of generalized Hurwitz matrices, which will be denoted by $H_M(f)$, for $M=1$ and $M=2$.  Note that we take all matrices $H_M$ to be infinite. Fittingly,  our main result generalizes both Routh-Hurwitz and Aissen-Edrei-Schoenberg-Whitney criteria:

\begin{theorem}\label{coolthm}
A polynomial $f(x) = a_0x^n + a_1x^{n-1} + \cdots + a_n$ $(a_0,  a_1,  \ldots,  a_n \in {\mathbb R}, \ a_0>0)$ of degree $n$ has no zeros in the sector $$ \left\{z\in\mathbb{C}: |\arg (z)| < \frac{\pi}{M}\right\} $$ whenever its generalized Hurwitz matrix $H_M(f)$ is totally nonnegative. 
\end{theorem}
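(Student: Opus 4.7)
The natural approach is induction on the degree $n$ of $f$. The base cases (small $n$, in particular $n<M$) can be handled by hand, since total nonnegativity of $H_M(f)$ already forces each coefficient $a_k$ to be nonnegative, and the sector $|\arg z|<\pi/M$ is small enough to rule out zeros of a low-degree polynomial with positive coefficients. For the inductive step, I would aim to establish a \emph{reduction lemma}: whenever $H_M(f)$ is totally nonnegative and $n$ is large enough, $f$ factors as $f=h\cdot g$ with $\deg h\leq M$, every root of $h$ outside the open sector $|\arg z|<\pi/M$, and $H_M(g)$ again totally nonnegative. Iterating the lemma then peels $f$ into outside-sector factors and immediately yields the theorem.

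To construct the factor $h$, I would exploit the $M$-periodic column structure of $H_M(f)$. Writing $f(x)=\sum_{k=0}^{M-1}x^{k}f_{k}(x^M)$, the matrix $H_M(f)$ interleaves the coefficients of the $M$ subpolynomials $f_k$, so its total nonnegativity forces a cascade of positivity relations among them. In the classical $M=2$ case this cascade is exactly the positivity of the partial quotients in the Stieltjes continued-fraction expansion of $f_0/(xf_1)$, which is equivalent to total nonnegativity of $H_2(f)$ and to stability of $f$. Generalizing this, I expect the ``generalized Euclidean algorithm'' promised in the abstract to operate on an $M$-tuple of polynomials instead of a pair, producing at each step a positive scalar $c$ such that a factor of the form $h(x)=x^M+c$ (or $h(x)=x+c$ in a degenerate case) divides $f$. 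All $M$ roots of $x^M+c$ lie on the rays $\arg z=(2k{+}1)\pi/M$, $k=0,\dots,M-1$, hence outside the forbidden sector, which supplies the outside-sector half of the reduction.

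The main technical obstacle is the total nonnegativity preservation: after dividing out $h$, one must check that $H_M(g)$ is again totally nonnegative. For $M=2$ this is the content of the factorization $H_2(f)=L\cdot H_2(g)$ of \cite{HOL}, with $L$ a simple totally nonnegative matrix associated with the peeled root. For general $M$ the analogous identity $H_M(f)=L\cdot H_M(g)$ (possibly with auxiliary rows and columns) must have $L$ explicitly totally nonnegative, ideally exhibited as a product of elementary bidiagonal totally nonnegative factors indexed by the peeled roots. Because $M$ coefficient subsequences now interact simultaneously, rather than just the two (even/odd) of the Hurwitz case, the combinatorics of the minors of $H_M$ become substantially more intricate, and I expect this verification---together with the correctness proof of the generalized Euclidean algorithm itself---to absorb the bulk of the technical labor.
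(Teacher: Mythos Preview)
Your reduction lemma is the heart of the proposal, and it is false. You want to write $f=h\cdot g$ with $h(x)=x^M+c$ (roots on the rays $\arg z=(2k{+}1)\pi/M$) and $H_M(g)$ again totally nonnegative. But total nonnegativity of $H_M(f)$ does not force the roots of $f$ onto those special rays; the theorem only places them outside the \emph{open} sector $|\arg z|<\pi/M$, and in general they sit nowhere near the rays. Concretely, take $M=3$ and $f(x)=(x+1)^7$: the paper verifies that all leading coefficients in the generalized Euclidean algorithm are positive, so $H_3(f)$ is totally nonnegative, yet the only divisors of $f$ are powers of $x+1$ and no factor $x^3+c$ exists. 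More broadly, the generalized Euclidean algorithm does \emph{not} produce factors of $f$. The matrix identity you allude to, $\widetilde H_M(f)=J(c_1)\,\widetilde H_M(F_1)$, holds with $F_1=f_1+\cdots+f_M$ of degree $n-1$, but $F_1$ is obtained by a Routh-type elimination step, not by polynomial division of $f$; it is typically not a divisor of $f$ at all. So the inductive machine has no engine: you cannot infer anything about the zeros of $f$ from those of $F_1$ without an additional argument that is essentially the whole theorem.

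The paper's proof is entirely different and uses no induction on degree and no polynomial factorization. One writes $f=f_0+\cdots+f_{M-1}$ in arithmetic pieces and observes that for each pair $(i,j)$ the ordinary Hurwitz matrix built from $f_i,f_j$ is a submatrix of $H_M(f)$, hence totally nonnegative. After the substitution $w=z^M$ this makes each ratio $p_j/p_0$ an $R$-function of positive type (it maps the open upper half-plane to itself). If $f(z)=0$ with $0<\arg z<\pi/M$, dividing by $f_0(z)$ yields $\sum_{j\ge 1} p_j(z^M)/\bigl(z^j p_0(z^M)\bigr)=-1$; an elementary argument bound using the $R$-function property shows that the arguments of the summands are confined so that their sum cannot equal $-1$. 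The key idea you are missing is this reduction to \emph{pairwise} classical Hurwitz information and the half-plane mapping property it entails; that, not a peeling factorization, is what drives the proof.
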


While not enjoying the ideal 'if and only if' format, this result is nevertheless beautifully similar to the two classical results we just revisited.  We shall discuss obstacles to the converse statement for $M>2$ in Section~\ref{sec:obstacles}. Section~\ref{sec:genEuclid} is devoted to a generalized Euclidean algorithm, which is crucial to our proof. That algorithm itself should be of independent interest in algebra and analysis. In Section~\ref{sec:continuedfrac}, we shall examine various continued fractions that can be constructed from the generalized Euclidean algorithm. In Section~\ref{sec:genHurwitz} we shall properly introduce generalized Hurwitz matrices and point out their connections with the generalized Euclidean algorithm. We shall then examine the interplay between generalized Hurwitz matrices and their regular Hurwitz submatrices in Section~\ref{sec:submatrices}. The proof of our main theorem will be given in Section~\ref{sec:proof}, along with an interesting factorization of generalized Hurwitz matrices in Section~\ref{sec:factorization}. Various related questions will be discussed in the remaining two sections.
\medskip

Our approach  is based on a systematic study of the following objects:
\begin{itemize}
\item generalized Hurwitz matrices and their submatrices;
\item continued fractions;
\item roots and coefficients of polynomials, especially Routh's (or Euclidean) algorithm.
\end{itemize}
We shall begin by defining and examining the generalized Euclidean algorithm, which will play a crucial role connecting all objects mentioned above.



\section{Generalized Euclidean algorithm}\label{sec:genEuclid}

We will now develop a generalization of Euclidean algorithm for $M>2$ polynomials (replacing $M=2$ for the regular Euclidean algorithm). What folows is in fact a special version designed for the purposes of splitting a given polynomial into $M$ parts according to the residues of the coefficients mod $M$. A more general version of the generalized Euclidean algorithm will be described in another paper.

Let
\[ f(x) = a_0x^n + a_1x^{n-1}+\cdots + a_n \]
 be a polynomial of degree $n$ with real coefficients $a_j$, $j=0, \ldots, n$. As usual, we define $\deg f(x) \eqbd -\infty$ if $f(x)\equiv 0$. Let $M$ be a positive integer, $2 \leq M \leq n$. Then $f$ can be split 
into a sum of polynomials
\begin{equation}\label{svk100}
f(x)=f_0(x)+f_1(x)+\cdots+f_{M-1}(x),
\end{equation}
{\rm where} 
\begin{equation}\label{svk9056}
f_j(x)=\sum_{ l\equiv j\,\text{mod}(M)\atop 0\leq l\leq n} a_lx^{n-l}.
\end{equation}

\begin{definition} A polynomial $p(x)$ is called \emph{arithmetic} with difference $M$ (and 
\emph{residue} $k$) if
\[ p(x)=\sum_{l\equiv k\,\text{mod}(M)} a_lx^{n-l}. \]
\end{definition}
Per this definition, \eqref{svk100} is a decomposition of $f$ into a sum of arithmetic polynomials with difference $M$ and the full set of residues. The degrees of monomials in every $f_j(x)$ form an arithmetic progression with difference $M$; every nonzero monomial term of $f(x)$ enters only one $f_j(x)$. Notice that a zero polynomial is arithmetic for any difference $M$.

A \emph{generalized Euclidean algorithm} associated with the decomposition \eqref{svk100} is defined as follows. For any $i=0,1,\ldots,M-2$, there is a unique representation of the form
\[ f_i(x) = d_i(x)f_{i+1}(x) + f_{i+M}(x), \]
where $d_i(x)$ and $f_{i+M}(x)$ are polynomials, subject to the following rules:
\begin{itemize}
\item[(a)] If $\deg(f_i)\geq\deg(f_{i+1})>-\infty$, then $f_{i+M}$ is the remainder in the division of the polynomial $f_i(x)$ by $f_{i+1}(x)$, and $d_i(x)$ is the quotient. Hence $\deg(f_{i+M})<\deg(f_{i+1})$.

\item[(b)]If $-\infty\leq \deg(f_i)<\deg(f_{i+1})$ then $d_i(x)\equiv 0$ and $f_{i+M}(x)=f_i(x)$.

\item[(c)]If $-\infty=\deg(f_{i+1})$ then $d_i(x)\equiv 0$ and $f_{i+M}(x)=f_i(x)$.
\end{itemize}

The algorithm stops when $f_n(x)$ is constructed.

We call this process the \emph{generalized Euclidean algorithm with step $M$} applied to the polynomials $\{f_0, \ f_1, \ \ldots, \ f_{M-1}\}$. \medskip

\begin{example}
Let $M=3$ and
$
f(x)=x^7+x^6+x^5+x^4+x^3+x^2+x+1.
$
Then
\[
\begin{aligned}f_0(x)&=x^7+x^4+x;\\ f_1(x)&=x^6+x^3+1;\\f_2(x)&=x^5+x^2.\end{aligned}
\]
Next,
\[
\begin{aligned}f_0(x)&=x\cdot f_1(x)+0\\ f_1(x)&=x\cdot f_2(x)+1\\f_2(x)&=0\cdot0+f_2(x)\\ f_3(x)&=0\cdot 1+0\\ f_4(x)&= 0\cdot f_5(x)+1\end{aligned}\quad\Longrightarrow\quad
\begin{aligned}f_3(x)&\equiv 0\\f_4(x)&=1\\f_5(x)&=x^5+x^2\\f_6(x)&\equiv0\\f_7(x)&\equiv 1.\end{aligned}
\]
It is convenient to arrange the resulting polynomials into the following table:
\begin{center}
\begin{tabular}{|l|c|c|c|}  \hline
\textbf{Groups: }$k$&$0$&$1$&$2$\\ \hline
$0+3k$&$x^7+x^4+x$&$0$&$0$\\
$1+3k$&$x^6+x^3+1$&$1$&$1$\\
$2+3k$&$x^5+x^2$&$x^5+x^2$&\text{}\\
\hline
\end{tabular}
\end{center}
\end{example}
\medskip

\begin{example} Let $M=3$ and
$
f(x)=(x+1)^7=x^7+7x^6+21x^5+35x^4+35x^3+21x^2+7x+1.
$
Then
\[
\begin{aligned}f_0(x)&=x^7+35x^4+7x;\\ f_1(x)&=7x^6+35x^3+1;\\f_2(x)&=21x^5+21x^2.\end{aligned}
\]
Next,
\[
\begin{aligned}f_0(x)&=\frac{x}{7}\cdot f_1(x)+30x^4+\frac{48}{7}x\\ f_1(x)&=\frac{x}{3}\cdot f_2(x)+28x^3+1\\f_2(x)&=\frac{7x}{10}f_3(x)+\frac{81x^2}{5}\\ f_3(x)&=\frac{15x}{14}\cdot f_4(x)+\frac{81}{14}x\\ f_4(x)&= \frac{5\cdot 28 x}{81}\cdot f_5(x)+1\end{aligned}\quad\Longrightarrow\quad
\begin{aligned}f_3(x)&=30x^4+\frac{48}{7}x\\f_4(x)&=28x^3+1\\f_5(x)&=\frac{81x^2}{5}\\f_6(x)&=\frac{81}{14}x\\f_7(x)&\equiv 1.\end{aligned}
\]
The table for this polynomial looks as follows:
\begin{center}
\begin{tabular}{|l|c|c|c|}  \hline
\textbf{Groups: }$k$&$0$&$1$&$2$\\ \hline
$0+3k$&$x^7+35x^4+7x$&$30x^4+\frac{48}{7}x$&$\frac{81}{14}x$\\
$1+3k$&$7x^6+35x^3+1$&$28x^3+1$&$1$\\
$2+3k$&$21x^5+21x^2$&$\frac{81x^2}{5}$&\text{}\\
\hline
\end{tabular}
\end{center}
\end{example}
\medskip

\begin{example} Let $M=3$ and
$
f(x)=x^7+x^6+x^5.
$
Then
\[
\begin{aligned}f_0(x)&=x^7;\\ f_1(x)&=x^6;\\f_2(x)&=x^5.\end{aligned}
\]
Next,
\[
\begin{aligned}f_0(x)&=x\cdot f_1(x)+0\\ f_1(x)&=x\cdot f_2(x)+0\\f_2(x)&=0\cdot f_3(x)+x^5\\ f_3(x)&= 0\cdot f_4(x)+0\\ f_4(x)&= 0\cdot f_5(x)+0\end{aligned}\quad\Longrightarrow\quad
\begin{aligned}f_3(x)&\equiv 0\\f_4(x)&=0\\f_5(x)&=x^5\\f_6(x)&\equiv 0\\f_7(x)&\equiv 0.\end{aligned}
\]
The table for this example is the following:
\begin{center}
\begin{tabular}{|l|c|c|c|}  \hline
\textbf{Groups: }$k$&$0$&$1$&$2$\\ \hline
$0+3k$&$x^7$&$0$&$0$\\
$1+3k$&$x^6$&$0$&$0$\\
$2+3k$&$x^5$&$x^5$&\text{}\\
\hline
\end{tabular}
\end{center}
\end{example}
\medskip

\begin{example} Let $M=4$ and
$
f(x)=x^9+x^7+x^6+x^5+x^3+x^2+x.
$
Then the table looks like
\begin{center}
\begin{tabular}{|l|c|c|c|}  \hline
\textbf{Groups: }$k$&$0$&$1$&$2$\\ \hline
$0+4k$&$x^9+x^5+x$&$x^9+x^5+x$&$x^9+x^5+x$\\
$1+4k$&$0$&$0$&$0$\\
$2+4k$&$x^7+x^3$&$0$&\text{}\\
$3+4k$&$x^6+x^2$&$x^6+x^2$&\text{}\\
\hline
\end{tabular}
\end{center}
\end{example}
\medskip

\begin{theorem}Let $f(x)$ be a polynomial of degree $n\geq 2$ with real coefficients and  let $2\leq M\leq n$. Let $\{f_0,f_1,\ldots,f_n\}$ be the polynomials obtained by the generalized Euclidean algorithm with step $M$. Then
\begin{itemize}
\item[A.] Every nonzero $f_i(x)$ is arithmetic with difference $M$.
\item[B.] If both polynomials $f_i(x)$ and $f_{i+M}(x)$ are nonzero, then their residues are equal.
\item[C.] For every $i$ either $d_i(x)\equiv 0$ or $d_i(x)$ is an arithmetic polynomial whose residue equals the difference between the residues of $f_i$ and $f_{i+1}$. In particular its degree is greater or equal to that difference.
\item[D.] If $f_i(x)$ and $f_{i+1}(x)$ are nonzero polynomials, then
\begin{equation}\label{svk101}|\deg(f_i)-\deg(f_{i+1})|\geq 1.
\end{equation}
\end{itemize}
\end{theorem}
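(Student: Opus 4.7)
The plan is to prove (A)--(D) simultaneously by induction on $i$, using the invariant that whenever $f_i\not\equiv 0$ its residue satisfies $r_i\equiv i\pmod{M}$. The base case $0\le i\le M-1$ is immediate from the defining decomposition~\eqref{svk9056}, which gives $r_j=j$ for $j=0,\ldots,M-1$.

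For the inductive step I examine how $f_{i+M}$ is produced. Cases~(b) and~(c) are trivial: $f_{i+M}=f_i$ inherits both arithmeticity and residue $r_i$, and $d_i\equiv 0$ makes the $d_i$-part of~(C) vacuous, so the invariant $r_{i+M}\equiv i+M\equiv i\pmod{M}$ is preserved. Case~(a) is the substantive one, and I would trace through the polynomial long-division loop one monomial at a time. At each step the running remainder $r$ (initially $f_i$) has all exponents in the class $n-r_i\pmod{M}$, while $f_{i+1}$ has all exponents in the class $n-r_{i+1}\pmod{M}$, so the cancellation step subtracts $c\cdot x^{\deg r-\deg f_{i+1}}\cdot f_{i+1}$, a polynomial whose exponents lie in the class $(\deg r-\deg f_{i+1})+(n-r_{i+1})\equiv n-r_i\pmod{M}$. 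Hence the residue class of $r$ is preserved throughout the division, so $f_{i+M}$ is arithmetic with the same residue $r_i\equiv i+M\pmod{M}$, giving~(A) and~(B). Moreover, every monomial $c\cdot x^{\deg r-\deg f_{i+1}}$ appended to $d_i$ has exponent in the single class $r_{i+1}-r_i\pmod{M}$, so $d_i$ is itself arithmetic with difference $M$ and the residue predicted by~(C); the degree bound in~(C) is then automatic, since a nonzero arithmetic polynomial always has degree at least the least nonnegative representative of its exponent class.

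Finally, (D) drops out as a corollary of (A) and the invariant: if $f_i$ and $f_{i+1}$ are both nonzero then their degrees lie in the residue classes $n-i$ and $n-(i+1)\pmod{M}$, which are distinct for $M\ge 2$; hence $\deg f_i$ and $\deg f_{i+1}$ are unequal integers and $|\deg f_i-\deg f_{i+1}|\ge 1$. The main obstacle is organizational rather than conceptual: one must fold the three-way case split (a)--(c) into a single induction and be vigilant about boundary behavior when polynomials vanish, relying on the convention that the zero polynomial is vacuously arithmetic with any residue so as not to disrupt the bookkeeping $r_i\equiv i\pmod M$.
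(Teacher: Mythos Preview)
Your proof is correct and follows essentially the same approach as the paper's: both argue by induction along the construction and verify in case~(a) that long division of one arithmetic polynomial by another yields an arithmetic quotient and an arithmetic remainder with the expected residues, while cases~(b) and~(c) are handled trivially. Your version is in fact a bit cleaner: by carrying the sharper invariant $r_i\equiv i\pmod M$ you obtain~(D) uniformly for every consecutive pair, whereas the paper inducts group by group and must treat the ``boundary'' pair $(f_{M-1},f_M)$ by a separate argument.
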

\begin{proof} Consider the table of $f(x)$ associated with the generalized Euclidian algorithm with step $M$:
\begin{center}
\begin{tabular}{|l|c|c|c|c|c|}  \hline
\textbf{Groups: }$k$&$k=0$&$k=1$&$k=2$&$\cdots$&$k=$\textbf{last}\\ \hline
$0+Mk$&$f_0(x)$&$f_M(x)$&$f_{2M}(x)$&$\cdots$&$f_{n-j}(x)$\\
$1+Mk$&$f_1(x)$&$f_{1+M}(x)$&$f_{1+2M}(x)$&$\cdots$&$f_{n-j+1}(x)$\\
$\vdots$&$\vdots$&$\vdots$&$\vdots$&$\cdots$&$\vdots$\\
$j+Mk$&$f_j(x)$&$f_{j+M}(x)$&$f_{j+2M}(x)$&$\cdots$&$f_n(x)$\\
$\vdots$&$\vdots$&$\vdots$&$\vdots$&$\cdots$&\text{}\\
$M(k+1)-1$&$f_{M-1}(x)$&$f_{M-1+M}(x)$&$f_{M-1+2M}(x)$&$\cdots$&\text{}\\ \hline
\end{tabular}
\end{center}

By the definition of the generalized Euclidean algorithm, all nonzero $f_i(x)$ with $0\leq i\leq M-1$ in group $0$ are arithmetic with difference $M$ and distinct residues. It follows that no pair of them can have equal degrees, implying \eqref{svk101} for $i=0,\ldots,i=M-2$.

Group $1$ of the polynomials $f_{i+M}$, $i=0,\ldots, M-1$ (see column $1$ in the table above) is determined by the identities
\[
f_i(x) = d_i(x)f_{i+1}(x) + f_{i+M}(x),\quad i=0,1,\ldots, M-1.
\]
If $0\leq i\leq M-2$ and we have the case (a) of the algorithm, then $f_{i+M}$ is the remainder in the division of one arithmetic polynomial, $f_i(x)$, by another arithmetic polynomial, $f_{i+1}(x)$ (with a different residue). It follows that $d_i(x)$ is an arithmetic polynomial with residue $k>0$, which shifts the arithmetic progression of the exponents in $f_{i+1}(x)$ to the arithmetic progression of the exponents of $f_i(x)$.  Hence $f_{i+M}=f_i(x)-d_i(x)f_{i+1}(x)$ is either the zero polynomial or an arithmetic polynomial with the same residue as $f_i(x)$ and $\deg(f_{i+M})<\deg(f_{i+1})<\deg(f_i)$.

In cases (b) and (c) we have $f_{i+M}\equiv f_i$. Moreover, if $f_i(x)\equiv 0$, then by requirement (b) of the algorithm all polynomials $f_{i+kM}$, $k=1,\ldots$, are zero too. If $f_{i+1}(x)\equiv 0$, then by requirement (c) of the algorithm $f_i\equiv f_{i+M}$. Since $f_{i+M+1}\equiv 0$ by our argument above, we get $f_i=f_{i+M}=f_{i+2M}$. Continuing by induction, we see that the whole row starting with $f_i$ is filled with $f_i$.

As to the 'boundary' pair $f_{M-1}$ and $f_M$, in case both of them are nonzero, the residue of $f_M$ equals the residue of $f_0$, which is $n$, whereas the residue of $f_{M-1}$ is $n-M+1$. Since $n-(n-M+1)=M-1$, we see that polynomials $f_{M-1}$ and $f_M$ have different residues, implying \eqref{svk101} in this case too.

The process can be continued by induction on $k$. In this way, we obtain two possibilities for each row: In the first case we have polynomials of strictly decreasing degrees and the row terminates either in a string of zero polynomials or in several copies of the same polynomial. In the second case the row is simply filled with zeros. In the third case it is filled with copies of the same polynomial.
\end{proof}
\begin{corollary}\label{corGEA}Let $f(x)$ be a polynomial of degree $n\geq 2$ with real coefficients and let $2\leq M\leq n$. Let $\{f_0,f_1,\ldots,f_n\}$ be the polynomials obtained by the generalized Euclidean algorithm with step $M$. If none of these polynomials is zero, then $d_i(x)=c_ix$ for $i=0,1,\ldots, n-1$ and $\deg(f_k)=n-k$.
\end{corollary}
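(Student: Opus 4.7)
The plan is to establish $\deg f_k = n - k$ for every $k \in \{0, 1, \ldots, n\}$; once this is done, the claim $d_i = c_i x$ follows, because part~C of the preceding theorem forces every $d_i$ to have only exponents $\equiv 1 \pmod M$, and sub-case~(a) of step~$i$ yields $\deg d_i = \deg f_i - \deg f_{i+1} = 1$, leaving only the single term $c_i x$.

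First I would establish the upper bound $\deg f_k \leq n - k$ by induction on $k$. The base $k = 0$ is $\deg f_0 = n$. For $1 \leq k \leq M - 1$ the initial polynomial $f_k$ contains only monomials $a_l x^{n-l}$ with $l \equiv k \pmod M$ and $l \geq k$, giving the bound. For $k \geq M$ apply step $k - M$ of the algorithm: sub-case~(a) gives $\deg f_k < \deg f_{k - M + 1} \leq n - k + M - 1$ by the inductive hypothesis, and sub-case~(b) gives $\deg f_k = \deg f_{k - M} < \deg f_{k - M + 1} \leq n - k + M - 1$ as well; combined with $\deg f_k \equiv n - k \pmod M$ and $M \geq 2$, both cases force $\deg f_k \leq n - k$.

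Next, for the tail indices $k \in \{n - M + 1, \ldots, n\}$, write $k = n - j$ with $0 \leq j \leq M - 1$. Since $f_k$ is nonzero we have $\deg f_k \geq 0$; by the upper bound $\deg f_k \leq j$; and by part~A of the preceding theorem $\deg f_k \equiv j \pmod M$. The only nonnegative integer in $[0, j]$ congruent to $j$ modulo $M$ is $j$ itself, so $\deg f_k = j = n - k$.

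Finally I would do backward induction from $k = n - M$ down to $k = 0$. Assuming $\deg f_j = n - j$ for all $j > k$, examine step $k$ (valid because $k \leq n - M$): $f_k = d_k f_{k+1} + f_{k+M}$ with $\deg f_{k+1} = n - k - 1$ and $\deg f_{k+M} = n - k - M$. In sub-case~(a), $\deg d_k = \deg f_k - \deg f_{k+1}$ is a positive integer $\equiv 1 \pmod M$; combined with the upper bound $\deg f_k \leq n - k$, this forces $\deg d_k = 1$, hence $d_k = c_k x$ and $\deg f_k = n - k$. In sub-case~(b), $f_{k+M} = f_k$ yields $\deg f_k = n - k - M$; for $k = 0$ this directly contradicts $\deg f_0 = n$, and for $k \geq 1$ I would examine step $k - 1$ (valid since $k - 1 \leq n - M - 1$) and observe that both of its sub-cases force $\deg f_{k - 1 + M}$ to be strictly less than $\deg f_k = n - k - M$, whereas the induction hypothesis yields $\deg f_{k - 1 + M} = n - k + 1 - M > n - k - M$. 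The main obstacle is controlling the tail $k > n - M$, where no algorithm step at $k$ is available; the residue-and-upper-bound trick of the third paragraph is the crucial ingredient that unblocks the subsequent backward induction.
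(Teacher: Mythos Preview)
Your proof is correct. The paper states the corollary without proof, leaving the deduction from parts A--D of the preceding theorem to the reader; your argument supplies precisely those details. The key ingredients you use---the congruence $\deg f_k \equiv n-k \pmod M$ coming from parts A and B, the inequality $\deg f_{k} < \deg f_{k-M+1}$ valid in both sub-cases (a) and (b), and the fact that the exponents of every nonzero $d_i$ are $\equiv 1 \pmod M$ from part C---are exactly the consequences of the theorem that make the corollary work. Your handling of the tail range $k>n-M$ via the residue-and-upper-bound trick, followed by backward induction eliminating sub-case~(b), is a clean and complete way to organize the argument; there is no gap.
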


\noindent
{\bf Remark.} We will refer to the situation of Corollary~\ref{corGEA} where none of the polynomials $\{ f_0, f_1 ,\ldots f_n\}$ is zero as the \emph{non-degenerate case}  of the generalized Euclidean algorithm.


\section{Continued fraction expansions}\label{sec:continuedfrac}
Of course, the regular Euclidean algorithm can be also applied to any pair of polynomials $(f_i,f_j)$ generated by the generalized Euclidean algorithm. Let us look into this, assuming the non-degenerate case. 

Suppose $0\leq i<j<M$. Denote the fraction $\dfrac{f_i}{f_j}$ by $R_{ij}$. We want to represent the function $R_{ij}(z)$ as a continued fraction
\begin{equation}\label{ku4}
R_{ij}(z) = q_1^{ij}(z)+ \dfrac{1}{q_2^{ij}(z)+ \dfrac{1}{\ddots+ \dfrac{1}{q_k^{ij}(z)}}}.
\end{equation}
Applying the ordinary Euclidean algorithm to the pair $(f_i,  f_j)$, we construct a sequence of polynomials $f^{ij}_0, \ f^{ij}_1, \ \ldots, \ f^{ij}_k$ with leading coefficients $h^{ij}_0, \ h^{ij}_1, \ \ldots, \ h^{ij}_k$, respectively, as follows:
\begin{eqnarray*}
f^{ij}_0(x) & := & f_i(x) = a_ix^{n-i} + a_{i+M}x^{n-i-M} + a_{i+2M}x^{n- i-2M} + \cdots  \\
f^{ij}_1(x)  & :=  & f_j(x) = a_jx^{n-j} + a_{j+M}x^{n-j-M} + a_{j+2M}x^{n- j-2M} + \cdots
\end{eqnarray*}
The subsequent polynomials are defined by extracting the leading term from the ratios
$\dfrac{f^{ij}_{\ell-1}}{f^{ij}_{\ell}}$:
$$f^{ij}_{\ell-1}(x) = : q^{ij}_{\ell}(x) f^{ij}_{\ell}(x) + f^{ij}_{\ell+1}(x),   \qquad \qquad \ell=1, 2, \ldots$$  where  $$q^{ij}_\ell(x) := \left\{\begin{array}{ll}
\dfrac{h^{ij}_{\ell-1}}{h^{ij}_\ell} \, x^{M -(j-i)} & \mbox{if $\ell$ is even};\\[10pt]
\dfrac{h^{ij}_{\ell-1}}{h^{ij}_\ell} \, x^{j-i} & \mbox{if $\ell$ is odd,} \end{array}\right.$$
$$\deg f^{ij}_{l+1}(x) = n - j -M \left\lceil \frac{\ell}{2} \right\rceil.$$
Hence the continued fraction \eqref{ku4} can be written explicitly as
\begin{equation}\label{ku5}
R_{ij}(z) = \dfrac{f_i(z)}{f_j(z)} = \dfrac{h^{ij}_{0}}{h^{ij}_1}z^{j-i}+ \dfrac{1}{\dfrac{h^{ij}_{1}}{h^{ij}_2}z^{M -(j-i)}+ \dfrac{1}{\ddots+\dfrac{1}{\dfrac{h^{ij}_{k-1}}{h^{ij}_k}z^{\mu}}}},
\end{equation}
where $$\mu =\left\{\begin{array}{ll}
M-(j-i)& \mbox{if $k$ is even}  ;\\[10pt]
j-i& \mbox{if $k$ is odd.} \end{array}\right.$$



We now proceed to make a simple but crucial observation about the continued fractions $R_{ij}$.
It turns out that these fractions, viewed as functions on $\C$, map cones with sufficiently small apertures emanating from the origin to similar cones. Here are the details.

Given two angles $\alpha<\beta$ between $-\pi$ and $\pi$, consider the cone
\begin{equation*}
K_{\alpha, \beta} := \{z \in {\mathbb C}: \alpha \leq \arg(z) \leq \beta \}.  \label{cones}
\end{equation*}

\begin{lemma}\label{lem61}
Let $R$ be a continued fraction
$$ R(z)= {a_1}z^m +\dfrac{1}{a_2 z^{M-m} +\dfrac{1}{a_3 z^m +\dfrac{1}{\ddots +\dfrac{1}{a_k z^\mu}}}}, $$ with all coefficients $a_j$ positive, $j=1,\ldots, k$, with $m$ an integer between $0$ and
$M$, and with the exponents alternating between $m$ and $M-m$ (so that $\mu=m$ if $k$ is odd and $\mu=M-m$ if $k$ is even). Then, for any angle $\alpha\in [0,\pi/M]$, the function $R$ maps the cone $K_{0,\alpha}$ into the cone $K_{-(M-m)\alpha,m\alpha}$.
\end{lemma}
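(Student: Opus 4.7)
The natural approach is induction on the depth $k$ of the continued fraction. The critical observation is that peeling off the outermost layer writes $R(z) = a_1 z^m + 1/S(z)$, where
\[
S(z) = a_2 z^{M-m} + \cfrac{1}{a_3 z^m + \cfrac{1}{\ddots + \cfrac{1}{a_k z^\mu}}}
\]
is again a continued fraction of the same form with depth $k-1$, but with the roles of $m$ and $M-m$ interchanged. This inherent symmetry between $m$ and $M-m$ in the statement of the lemma is exactly what makes the induction work, so I would be careful to exploit it from the outset.

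For the base case $k=1$, we have $\mu=m$, so $R(z)=a_1 z^m$; for $z\in K_{0,\alpha}$, $\arg R(z) = m\arg z \in [0,m\alpha]$, hence $R(z)\in K_{0,m\alpha}\subseteq K_{-(M-m)\alpha,\,m\alpha}$. For the inductive step, apply the inductive hypothesis to $S$ with the roles $m\leftrightarrow M-m$ swapped, yielding $S(K_{0,\alpha})\subseteq K_{-m\alpha,\,(M-m)\alpha}$. Reciprocation negates arguments, so $1/S(z) \in K_{-(M-m)\alpha,\,m\alpha}$. Meanwhile $a_1z^m \in K_{0,m\alpha}\subseteq K_{-(M-m)\alpha,\,m\alpha}$. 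Since $\alpha\le \pi/M$, this target sector has aperture $M\alpha\le \pi$, hence is a convex subset of $\mathbb{C}$, and so it is closed under addition of its elements. Therefore $R(z)=a_1 z^m + 1/S(z)\in K_{-(M-m)\alpha,\,m\alpha}$, completing the induction.

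The main technical point to take care of is the boundary case $\alpha=\pi/M$, where the target cone degenerates to a closed half-plane: one must confirm that convexity (and hence closure under addition) still holds there, which it does. A secondary issue is ensuring $S(z)\neq 0$ so that $1/S(z)$ is defined; this is immediate from positivity of all $a_j$ and the fact that $S$, having been shown to take values in a sector of aperture $\le\pi$, can only vanish on a ray, which can be ruled out by a direct inspection of the leading and trailing terms of $S$. The degenerate cases $m=0$ and $m=M$ (where $a_1 z^m$ is real or where one cone face collapses onto the positive real axis) require no separate argument since the arithmetic of the argument ranges remains valid. No deeper obstacle is anticipated; the proof is essentially a clean coupling of the reciprocal operation with convexity of sectors of aperture at most $\pi$.
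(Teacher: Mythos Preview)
Your proof is correct and takes essentially the same approach as the paper's: both track how monomial maps, inversion, and addition act on sectors, relying (implicitly in the paper, explicitly in your argument) on the fact that a sector of aperture at most $\pi$ is convex and hence closed under addition. The only cosmetic difference is that the paper unrolls the computation from the innermost term $a_k z^\mu$ outward, while you frame it as a formal induction on $k$ peeling off the outermost layer and swapping $m\leftrightarrow M-m$.
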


\begin{proof}  The map $R$ is a composition of special monomial maps, multiplication by constants, additions, and inversion. Let us examine how these maps act on our cone $K_{0,\alpha}$.

A function $()^j : z\mapsto z^j$ maps a cone $K_{\alpha,\beta}$ to $K_{j\alpha,j\beta}$. Multiplication by a positive constant leaves any such cone invariant.  Inversion $()^{-1}$ maps a cone $K_{\alpha,\beta}$ (of course excluding the origin) to the cone $K_{-\beta,-\alpha}$. Equipped with these basic observations, we can now understand the action of $R$ on a cone of type $K_{0,\alpha}$ for $\alpha\leq \pi/M$.

Suppose for simplicity that $k$ is odd. Then the last monomial, $()^m$, maps $K_{0,\alpha}$ to $K_{0,m\alpha}$. Multiplication by $a_k$ leaves the latter cone invariant, and inversion maps it to the cone $K_{-m\alpha, 0}$. The previous monomial (followed by muplication by a positive constant $a_{k-1}$) maps $K_{0,\alpha}$ to $K_{0,(M-m)\alpha}$, so the result lies in
 $K_{0,(M-m)\alpha} + K_{-m\alpha,0} \subset K_{-m\alpha, (M-m)\alpha}$.

The inversion that follows maps $K_{-m\alpha, (M-m)\alpha}$ to $K_{-(M-m)\alpha,m\alpha}$. The function $a_{k-2}()^m$ maps $K_{0,\alpha}$ to $K_{0,m\alpha}$, and the next cone addition yields
$ K_{0,m\alpha} + K_{-(M-m)\alpha, m\alpha} \subset K_{-(M-m)\alpha,m\alpha}$.
From this point on, we shall alternate between the cone $K_{-(M-m)\alpha, m\alpha}$ and its
reflection $K_{-m\alpha,(M-m)\alpha}$ since
\begin{eqnarray*}
K_{0,(M-m)\alpha} + (K_{-(M-m)\alpha,m\alpha})^{-1} & \subset & K_{-m\alpha, (M-m)\alpha}   \\
K_{0,m\alpha} + (K_{-m\alpha,(M-m)\alpha})^{-1} & \subset & K_{-(M-m)\alpha, m\alpha}.
\end{eqnarray*}
Our last operation is of the second type, so we shall end up inside the cone $K_{-(M-m)\alpha, m\alpha}$. Note that this happens regardless of the parity of $k$.
\hfill \qed \end{proof} \medskip

As promised above, we can now apply this lemma to our functions $R_{ij}$:

\begin{corollary}\label{cor22}
Let $R_{ij} = \dfrac{f_i}{f_j}$ $(0 \leq i < j \leq M-1)$ be a rational function defined by continued fraction \eqref{ku5} with all coefficients $h_r$, $r = 0, \ \ldots, \ k$ positive.  Then
 $R_{ij}$ maps the cone $K_{0,\alpha}$ into the cone $K_{-(M-j+i)\alpha,(j-i)\alpha}$ whenever $0\leq \alpha\leq \pi/M$.

\end{corollary}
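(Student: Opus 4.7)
The plan is to apply Lemma~\ref{lem61} directly to the continued fraction expression \eqref{ku5} for $R_{ij}$, with the parameters identified as follows. First I would set $m \eqbd j - i$, so that $M - m = M - j + i$; since $0 \leq i < j \leq M - 1$, we have $1 \leq m \leq M - 1$, placing $m$ in the admissible range between $0$ and $M$ required by the lemma.

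Next I would verify the two structural hypotheses of Lemma~\ref{lem61}. The coefficients appearing in \eqref{ku5} are the ratios $h^{ij}_{r-1}/h^{ij}_{r}$ for $r = 1, \ldots, k$, and by assumption each $h^{ij}_r$ is positive, so all of these ratios are positive. The exponents in \eqref{ku5} alternate between $j - i = m$ (at the odd levels) and $M - (j - i) = M - m$ (at the even levels), and the terminal exponent $\mu$ equals $m$ when $k$ is odd and $M - m$ when $k$ is even, exactly matching the alternating pattern demanded by the lemma.

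With these identifications in place, Lemma~\ref{lem61} asserts that for every $\alpha \in [0, \pi/M]$, the function $R_{ij}$ maps $K_{0, \alpha}$ into $K_{-(M - m)\alpha,\, m\alpha}$. Substituting $m = j - i$ gives the target cone $K_{-(M - j + i)\alpha,\, (j - i)\alpha}$, which is precisely the claim of the corollary.

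The main (and really only) step is the bookkeeping of matching parameters; there is no genuine obstacle, because the continued fraction \eqref{ku5} was written in the exact normal form needed to invoke Lemma~\ref{lem61}. The corollary is essentially a translation of the lemma into the specialized notation of the generalized Euclidean algorithm.
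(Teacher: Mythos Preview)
Your proposal is correct and matches the paper's own treatment: the corollary is stated immediately after Lemma~\ref{lem61} with no separate proof, since it is precisely the specialization $m = j - i$ that you describe. The parameter bookkeeping you carry out (checking $1 \le m \le M-1$, positivity of the ratios $h^{ij}_{r-1}/h^{ij}_r$, and the alternation of exponents with the correct terminal $\mu$) is exactly what is implicitly invoked.
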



\section{Generalized Hurwitz matrices}\label{sec:genHurwitz}

Every polynomial
\[
f(x) = a_0x^n + a_1x^{n-1}+\cdots + a_n
\]
and an integer $2\leq M\leq n$ determine a \textit{generalized Hurwitz matrix}~(defined by Goldman and Sun in \cite{GoSu})
\[
H_M=( a_{Mj-i} )_{i,j=1}^{\infty} .
\]
In the formula for the entries of $H_M$ we assume that $a_k=0$ for integers $k<0$ and $k>n$. Thus a generalized Hurwitz matrix is an infinite matrix of the following form:
\[
H_M=\begin{pmatrix}a_{M-1}&a_{2M-1}&a_{3M-1}&\cdots\\a_{M-2}&a_{2M-2}&a_{3M-2}&\cdots\\\vdots&\vdots&\vdots&\\a_0&a_M&a_{2M}&\cdots\\
0&a_{M-1}&a_{2M-1}&\cdots\\0&a_{M-2}&a_{2M-2}&\cdots\\\vdots&\vdots&\vdots&\\
0&a_0&a_M&\cdots\\0&0&a_{M-1}&\cdots\\\vdots&\vdots&\vdots& \ddots \end{pmatrix}
\]
The matrix $H_M$ is constructed from the coefficients of the arithmetic polynomials $f_{M-1}$, $f_{M-2}$, $\ldots$, $f_0$ of the polynomial $f$. The first row is filled with the coefficients of $f_{M-1}$. The second row is filled with the coefficients of $f_{M-2}$, etc. The coefficients of $f_0$ form the $M$th row of $H_M$. Then this first block of $M$ rows is shifted one step to the right and placed underneath. And so on.
Note that this structure generalizes both Toeplitz and Hurwitz structures simultaneously.

\begin{definition} A matrix $A$ of size $n\times m$, where $n$ and $m$ may take infinite values, is called totally nonnegative if all its minors are nonnegative:
\[
A\begin{pmatrix}i_1&\ldots&i_p\\j_1&\ldots&j_p\end{pmatrix}\geq 0.
\]
\end{definition}
\noindent Taking $p=1$, we see that all entries of a totally nonnegative matrix are nonnegative.

It was shown in \cite{GoSu} that the generalized Hurwitz matrix $H_M$ is totally nonnegative if and only if its $n$ \emph{special} minors are nonnegative:
\begin{equation}\label{GoSo1}
H_M(k,r)\eqbd H_M\begin{pmatrix}k & k+1 & \ldots & k+r-1 \\
 1& 2 & \ldots & r  \end{pmatrix} \geq 0,
\end{equation}
for $k=1,\ldots M-1$ and $r=1,\ldots,\left\lfloor\frac{n+k-1}{M-1}\right\rfloor$. This enumeration of special minors was introduced by Pinkus in \cite[Theorem 4.6, p.111]{PINK}. 

Another natural enumeration was introduced by Goldman and Sun in \cite{GoSu}. For this enumeration, we need to 
locate the bottom left matrix entry $a_p$ of the minor \eqref{GoSo1}  ($i=k+r-1$ and $j=r$):
\begin{equation}\label{GoSo2}
p=Mj-i=(M-1)r-(k-1)=(M-1)(r-1)+(M-k).
\end{equation}
Every number $p=1,2,\ldots,n$ can be uniquely represented in the form \eqref{GoSo2}. Therefore if we are given such a $p$, we can determine $k$ and $r$. We denote by
\[
\Delta_p\eqbd H_M(k,r)=H_M\begin{pmatrix}k & k+1 & \ldots & k+r-1 \\
 1& 2 & \ldots & r  \end{pmatrix}
\]
the corresponding minor of the generalized Hurwitz matrix.
\begin{theorem}[{\cite[Theorem 2.1]{GoSu}}]\label{Theorem2.2} Suppose that all special minors
\begin{equation}\label{DeltaCond}
\Delta_p>0,\qquad p=1,\ldots, n
\end{equation}
are positive. Then
\begin{itemize}
\item[$\mathbf{(a)}$] the matrix $H_M$ is totally nonnegative;
\item[$\mathbf{(b)}$] all coefficients $a_p$ of the polynomial $f(x)$ are positive;
\item[$\mathbf{(c)}$] a minor $M$ of $H_M$ is strictly positive if and only if its diagonal elements are strictly positive.
\end{itemize}
\end{theorem}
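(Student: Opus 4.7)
The plan is to establish the three conclusions in the order (b), (a), (c), using the sparsity pattern of $H_M$ together with a bidiagonal factorization driven by the special minors $\Delta_p$.

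For (b), I first observe that for $1 \leq p \leq M-1$ the representation $p = (M-1)(r-1) + (M-k)$ forces $r = 1$, $k = M-p$, so the corresponding special minor reduces to the single entry $\Delta_p = a_p$; hence $a_1,\ldots,a_{M-1} > 0$ at once. For $p \geq M$ I proceed by induction on $p$. The coefficient $a_p$ occupies the bottom-left corner of the block defining $\Delta_p$; a Laplace expansion along the leftmost column produces $a_p$ multiplied by the complementary minor, plus a sum of terms involving only coefficients $a_q$ with $q < p$ already known to be positive and minors of $H_M$ that can be bounded in terms of $\Delta_q$ with $q \leq p-1$ via the sparsity pattern. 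Solving for $a_p$ and using $\Delta_p > 0$ together with the inductive hypothesis yields $a_p > 0$.

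For (a), I would construct an explicit bidiagonal factorization of $H_M$ generalizing the classical Hurwitz factorization for $M = 2$ recalled in Section~4.11 of \cite{PINK}. The idea is to realize the generalized Euclidean algorithm of Section~\ref{sec:genEuclid} as a sequence of elementary row reductions on $H_M$: each reduction corresponds to left-multiplication by a bidiagonal matrix whose nontrivial entry is the leading coefficient of one of the quotients $d_i(x)$, which in turn can be written as a ratio of consecutive $\Delta_p$'s. Positivity of every $\Delta_p$ therefore translates into positivity of every pivot; each elementary bidiagonal factor is totally nonnegative, and since total nonnegativity is preserved under products, so is $H_M$. (This is also the factorization advertised in Section~\ref{sec:factorization}.)

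For (c), the forward direction is immediate from the sparsity pattern of $H_M$ together with total nonnegativity: any minor with a zero entry on its main diagonal can be brought, by row/column swaps respecting the zero pattern of $H_M$, into a block-triangular form with a vanishing diagonal block, forcing the minor itself to vanish. The converse uses the factorization from the previous step and the Cauchy--Binet formula: the minor in question expands as a nonnegative sum over increasing index sequences through the bidiagonal factors, and the ``diagonal trail'' contributes a strictly positive product of pivots precisely when the diagonal of the minor contains no zero entry.

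The principal obstacle I expect is the bidiagonal factorization itself: one must identify explicit row operations that respect the arithmetic-progression structure of the rows of $H_M$ and produce pivots that are ratios of consecutive special minors. The bridge between these operations and the steps of the generalized Euclidean algorithm is the essential technical ingredient, and making it precise enough that the induction in (b) closes, that total nonnegativity of each factor is transparent, and that the Cauchy--Binet argument in (c) isolates a strictly positive term, will be the most delicate part of the argument.
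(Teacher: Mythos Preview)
The paper does \emph{not} supply its own proof of this theorem: it is quoted verbatim as \cite[Theorem~2.1]{GoSu} and used as a black box. So there is no ``paper's proof'' to compare against. That said, the paper does develop machinery---Lemma~\ref{lem31}, Corollary~\ref{cor31}, and the factorization Theorem~\ref{thm:factor}---from which a proof along your lines could be assembled, and your outline is broadly in that spirit.

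Two concrete gaps remain in your proposal. First, the order (b)$\to$(a)$\to$(c) is awkward: your inductive step for (b) appeals to ``minors of $H_M$ that can be bounded in terms of $\Delta_q$ with $q\le p-1$,'' but without (a) you have no control over generic minors of $H_M$, so this is circular as written. The cleaner route is the one the paper's tools suggest: first show that $\Delta_p>0$ for all $p$ forces all the Euclidean leading coefficients $h_i$ to be positive (Lemma~\ref{lem31} gives each $\Delta_p$ as a product of $h$'s, so the $h_i$ are ratios of consecutive $\Delta$'s), then invoke the factorization of Section~\ref{sec:factorization} to get (a); (b) then drops out of the Corollary following Theorem~\ref{EuHur}, and (c) follows by Cauchy--Binet as you sketch. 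Second, a minor point: what you call the ``bottom-left'' entry $a_p$ of the block (echoing the paper's wording) is actually the bottom-\emph{right} entry $(i,j)=(k+r-1,r)$, so a Laplace expansion along the leftmost column does not isolate $a_p$ in the way you describe.
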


Our next theorem shows how generalized Hurwitz matrices are related to the generalized Euclidean algorithm.

\begin{theorem}\label{EuHur}
 If all the leading coefficients $\{h_0, \ h_1, \ \ldots, \ h_n\}$ of the polynomials $\{f_0, \ f_1, \ \ldots, \ f_n\}$ are positive, then the conditions $\eqref{DeltaCond}$ are satisfied and the matrix $H_M$ is totaly nonnegative.
\end{theorem}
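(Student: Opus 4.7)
The plan is to show that every special minor $\Delta_p$ is strictly positive for $p=1,\ldots,n$; the total nonnegativity of $H_M$ then follows immediately from Theorem~\ref{Theorem2.2}. The assumption that every $h_i$ is positive places us in the non-degenerate case of Corollary~\ref{corGEA}: each $d_i$ has the form $c_i x$ with $c_i = h_i/h_{i+1}>0$, each $f_k$ has degree $n-k$, and all the Euclidean identities
\[
f_i(x) \;=\; c_i\, x\, f_{i+1}(x) + f_{i+M}(x), \qquad i=0,1,\ldots,n-1,
\]
are genuine (not trivialized by one side being zero).

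The key observation is that these identities translate directly into row operations on $H_M$. By the block structure of $H_M$, multiplying a polynomial by $x$ corresponds to shifting its row one place to the right, which is again a row of $H_M$ (from the next block of $M$ rows). Thus each identity above becomes a row relation
\[
(\text{row of } f_i) \;=\; c_i\,(\text{row of } f_{i+1}\text{ shifted})\;+\;(\text{row of } f_{i+M}),
\]
whose last summand has first entry $0$ and leading nonzero entry $h_{i+M}$. Subtracting $c_i$ times the appropriate shifted row from the row of $f_i$ therefore replaces that row by the row of $f_{i+M}$, zeroing out the leading column entry.

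Apply these reductions simultaneously in every block of $M$ rows. After processing $i=0,1,\ldots,M-2$ within the top block (and correspondingly in all lower blocks), followed by a final reduction of the $f_{M-1}$ row against the just-updated $f_M$ row in the second block via $f_{M-1}=c_{M-1}x f_M + f_{2M-1}$, the first column of the transformed matrix has a single nonzero entry $h_{M-1}$ at position $(1,1)$. Deleting the first row and first column yields a shifted copy of a generalized Hurwitz matrix of the same type, built now from the next ``layer'' of Euclidean polynomials $f_M, f_{M+1}, \ldots, f_{2M-1}$, whose leading coefficients are again positive by hypothesis. By Laplace expansion along the new zero column, every special minor $\Delta_p$ involving that column factors as $h_{M-1}$ times a corresponding special minor of the reduced matrix.

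Iterating this reduction, one obtains by induction on $p$ (ordered lexicographically by the pair $(r,k)$ from \eqref{GoSo2}) a factorization $\Delta_p = C_p \prod_i h_i^{e_i}$ with $C_p>0$ and non-negative integer exponents $e_i$. Positivity of every $h_i$ then gives $\Delta_p>0$ for all $p=1,\ldots,n$, which is \eqref{DeltaCond}; Theorem~\ref{Theorem2.2} finishes the proof. The main obstacle is the careful bookkeeping in the iterated row-reduction step: the simultaneous row operations across infinitely many blocks must be shown to assemble into a genuine shifted copy of a smaller generalized Hurwitz matrix, which requires particular care at the interface between consecutive blocks and at the ``wrap-around'' step pairing $f_{M-1}$ with the updated $f_M$-row. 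A clean invariant capturing the shape of the residual matrix after each pass is the principal technical ingredient.
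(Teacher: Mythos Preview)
Your overall strategy coincides with the paper's: reduce via the Euclidean identities to show every special minor $\Delta_p$ is a product of the $h_i$, then invoke Theorem~\ref{Theorem2.2}. However, your translation of the identity $f_i=c_i\,x\,f_{i+1}+f_{i+M}$ into a row relation is wrong. In the compressed rows of $H_M$ the row of $f_i$ is $(a_i,a_{i+M},a_{i+2M},\ldots)$ and the row of $f_{i+1}$ is $(a_{i+1},a_{i+1+M},\ldots)$; multiplying $f_{i+1}$ by $x$ raises every degree by one, which \emph{aligns} its arithmetic progression with that of $f_i$ rather than shifting the compressed row to the right. The correct row operation is therefore ``subtract $c_i$ times the row \emph{directly above} (same block, no shift)'', yielding $(0,h_{i+M},\ldots)$. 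With your right-shift, the first column entry survives as $a_i\neq 0$ and nothing is eliminated, so the stated reduction fails.

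Once the row relation is fixed, the global scheme you outline becomes workable, but the self-acknowledged ``main obstacle'' of block-boundary bookkeeping is avoidable. The paper (Lemma~\ref{lem31}) simply performs this elimination on each finite submatrix defining $H_M(k,r)$ separately: successive sweeps down columns $1,2,\ldots$ bring the submatrix to upper-triangular form with diagonal $h_{M-k},h_{2M-k-1},\ldots,h_{rM-k-(r-1)}$, giving the exact formula
\[
H_M(k,r)=h_{M-k}\,h_{2M-k-1}\cdots h_{rM-k-(r-1)}.
\]
This makes the constant $C_p$ equal to $1$, identifies the exponents $e_i$ explicitly, and sidesteps the wrap-around and infinite-matrix invariants entirely.
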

\begin{corollary}If all the leading coefficients $\{h_0, \ h_1, \ \ldots, \ h_n\}$ of the polynomials $\{f_0, \ f_1, \ \ldots, \ f_n\}$ are positive, then all coefficients of the polynomial $f$ are positive.
\end{corollary}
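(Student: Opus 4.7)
The approach is to show that, under the stated hypothesis, every special minor $\Delta_p$ of $H_M$ factors as a positive monomial in the leading coefficients $h_0,\,h_1,\,\ldots,\,h_n$. Then condition \eqref{DeltaCond} will hold and Theorem~\ref{Theorem2.2} will deliver total nonnegativity of $H_M$. Since no $h_i$ vanishes, no $f_i$ is the zero polynomial, placing us in the non-degenerate regime of Corollary~\ref{corGEA}. Thus each $d_i(x)=c_i x$, and matching leading coefficients in
$$ f_i(x) \;=\; c_i\,x\,f_{i+1}(x) \;+\; f_{i+M}(x) $$
yields $c_i = h_i/h_{i+1} > 0$.

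I would interpret each such Euclidean identity as a row operation on $H_M$. In $H_M$, the coefficient sequence of $f_i$, read at the exponents $n-i,\,n-i-M,\,n-i-2M,\,\ldots$, occupies one specific row, and the coefficient sequence of $x f_{i+1}$ at those same exponents occupies the row immediately above, by the $M$-row block-shift structure of $H_M$. Subtracting $c_i$ times the upper row from the $f_i$-row produces a row beginning with $0$ and continuing with the coefficients of $f_{i+M}$. To evaluate the minor $\Delta_p$ (rows $k,\,k+1,\,\ldots,\,k+r-1$ and columns $1,\,\ldots,\,r$, where $p=(M-1)(r-1)+(M-k)$), I carry out these operations successively, working through each sub-diagonal of the $r\times r$ submatrix. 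Because every operation uses a row inside the chosen rows of the minor (again by the block-shift structure), $\Delta_p$ is preserved at each step. The resulting upper-triangular matrix has on its diagonal the leading coefficients of an arithmetic sub-sequence of generalized Euclidean polynomials, giving
$$ \Delta_p \;=\; \prod_{\ell=0}^{r-1} h_{(M-k)+\ell(M-1)}. $$
This matches the easy cases $\Delta_p = a_p = h_p$ for $1\le p\le M-1$, $\Delta_M = h_1 h_M$, $\Delta_{M+1} = h_2 h_{M+1}$ (verifiable by one row operation), and the classical Routh-Hurwitz formula $\Delta_r = h_1 h_2 \cdots h_r$ for $M=2$.

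Every factor in this product is positive, so $\Delta_p>0$ for all $p=1,\ldots,n$, which is exactly \eqref{DeltaCond}; Theorem~\ref{Theorem2.2} then gives that $H_M$ is totally nonnegative, and its part~(b) immediately yields the corollary. The main obstacle is the combinatorial bookkeeping in the triangulation step: one must describe the precise order of row operations used to reduce the $r\times r$ submatrix, verify that each required ``shifted copy'' of a Euclidean row operation lies inside the selected rows $k,\,\ldots,\,k+r-1$, and track which leading coefficient surfaces at each diagonal entry. Once the indexing map from $p$ to the arithmetic progression $\{M-k,\,M-k+(M-1),\,\ldots,\,p\}$ is set up cleanly, the positivity conclusion is mechanical; the work lies entirely in matching the block-shift structure of $H_M$ with the recursion structure of the generalized Euclidean table.
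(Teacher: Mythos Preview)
Your proposal is correct and follows essentially the same route as the paper. The paper isolates your triangulation step as Lemma~\ref{lem31}, obtaining precisely your formula $H_M(k,r)=\prod_{\ell=0}^{r-1} h_{(M-k)+\ell(M-1)}$ via the same column-by-column Gaussian elimination, then applies Theorem~\ref{Theorem2.2}(b) exactly as you do.
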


The proof of Theorem \ref{EuHur} is an easy consequence of Theorem \ref{Theorem2.2} and the following lemma.

\begin{lemma}\label{lem31} Let $a_0 > 0$ and let $h_i$ denote the leading coefficients of the polynomials $f_i$ in the generalized Euclidean algorithm with step $M$. Then
\begin{equation}\label{svk110}
\begin{aligned}
H_M(1,r)  &= \ h_{M-1}h_{2M-2} \ldots h_{rM - r}\\[1mm]
H_M(2,r)  &= \ h_{M-2}h_{2M-3} \ldots h_{rM - (r+1)}\\[1mm]
\cdots\cdots\cdots&  \cdots\cdots\cdots\cdots\cdots\cdots\cdots\cdots\cdots\\[1mm]
H_M(M-1,r)&= \ h_1 h_M \ldots h_{rM - (r+ M-2)},
\end{aligned}
\end{equation}
where $r = 1, \ldots, \left\lceil\dfrac{n}{M-1}\right\rceil$, $h_i:=0$ for $i > n$.\end{lemma}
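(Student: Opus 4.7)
The plan is to prove the formulas by induction on $r$, using row operations on each $r\times r$ submatrix that mirror one round of the generalized Euclidean algorithm. The base case $r=1$ is immediate: the single entry $H_M(k,1)=a_{M-k}$ coincides with the leading coefficient $h_{M-k}$ of $f_{M-k}$.

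For the inductive step, fix $k$ and let $S$ denote the $r\times r$ submatrix defining $H_M(k,r)$; by the block structure of $H_M$, its rows are (possibly shifted) coefficient sequences of $f_{M-1},f_{M-2},\ldots,f_0$. I would apply a unit lower-triangular row transformation $T$ that, for each unshifted row $i\ge 2$ of $S$, replaces row $i$ by row $i$ minus $c_{M-k-i+1}\eqbd h_{M-k-i+1}/h_{M-k-i+2}$ times the preceding row of the \emph{original} $S$. Because each such operation is exactly the Euclidean step $f_{M-k-i+1}-c_{M-k-i+1}\,x\,f_{M-k-i+2}=f_{2M-k-i+1}$, the new row $i$ contains the coefficient sequence of $f_{2M-k-i+1}$ and has a zero in the first column. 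Already-shifted rows of $S$ have zero first entries by construction and remain unchanged. Since $\det T=1$, cofactor expansion along the first column of $TS$ gives
\[ \det S = h_{M-k}\cdot \det M', \]
where $M'$ is the $(r-1)\times(r-1)$ submatrix obtained by deleting the first row and first column.

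The key identification is $M'=\widetilde H_M(1,r-1)$, where $\widetilde f\eqbd f_{M-k}+f_{M-k+1}+\cdots+f_{2M-k-1}$ is a polynomial of degree $n-M+k$. Since its $M$ consecutive summands cover a complete set of residues mod $M$, $\widetilde f$ has an honest arithmetic decomposition with $\widetilde f_s=f_{M-k+s}$ for $s=0,\ldots,M-1$. Checking that $\widetilde c_i=(\widetilde h_i/\widetilde h_{i+1})\,x=c_{M-k+i}$ for all $i$, one obtains $\widetilde f_i=f_{M-k+i}$ and $\widetilde h_i=h_{M-k+i}$ by induction on $i$. The rows of $M'$ then match those of $\widetilde H_M$ in the same order: the reduced unshifted rows of $S$ produce the leading-onward coefficient sequences of $f_{2M-k-1},f_{2M-k-2},\ldots$, while the shifted rows of $S$ lose their leading zero upon column deletion and contribute the sequences of $f_{M-1},f_{M-2},\ldots$ (which are precisely the rows of $\widetilde H_M$ at indices $M,M+1,\ldots$).

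By the inductive hypothesis,
\[ \det M'=\widetilde H_M(1,r-1)=\prod_{i=1}^{r-1}\widetilde h_{iM-i}=\prod_{i=1}^{r-1}h_{(i+1)M-i-k}, \]
and combining with the factor $h_{M-k}$ and reindexing yields $H_M(k,r)=\prod_{j=1}^{r}h_{jM-j-k+1}$, which matches the formulas in the lemma for each $k=1,\ldots,M-1$. The main obstacle will be the uniform verification of the identification $M'=\widetilde H_M(1,r-1)$ across all pairs $(k,r)$, particularly managing the boundary between reduced unshifted rows of $S$ and unreduced shifted rows of $S$; this requires careful bookkeeping with offsets in the block structures of $H_M$ and $\widetilde H_M$ but succeeds uniformly because $\widetilde f$'s natural decomposition aligns with the shift by $M-k$. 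Degenerate cases where some $h_i=0$ can be handled by continuity from the non-degenerate regime (which is the setting of Corollary~\ref{corGEA}) or by direct verification that both sides of the identity vanish.
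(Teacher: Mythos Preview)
Your core row-reduction idea (eliminate in column $1$ via the Euclidean step $f_j \mapsto f_j - (h_j/h_{j+1})\,x\,f_{j+1} = f_{j+M}$) is correct and is exactly what the paper does. However, your key identification $M' = \widetilde{H}_M(1, r-1)$ is \emph{false} once $r-1 > M$, so the induction breaks. Take $M=3$, $k=1$, $r=5$. After your single round of elimination (on rows $2,3$ of $S$ only), row $4$ of the $4\times 4$ matrix $M'$ is the coefficient sequence $(a_1, a_4, a_7, a_{10})$ of the \emph{original} $f_1$, inherited unchanged from the once-shifted block of $H_3$; but row $4$ of $\widetilde{H}_3(1,4)$ is the shifted coefficient sequence $(0, h_4, \ldots)$ of $\widetilde{f}_2 = f_4$. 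These are different rows. In general, every later shifted block of $S$ contributes the original arithmetic pieces $f_{M-k-1}, \ldots, f_0$, not the remainders $f_{2M-k-1}, \ldots, f_M$ that populate the corresponding block of $\widetilde{H}_M$.

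The determinants of $M'$ and $\widetilde{H}_M(1,r-1)$ do coincide, but only because a further round of row operations (e.g.\ replacing $f_1$ by $f_1-(h_1/h_2)xf_2=f_4$ using row $3$ of $M'$) converts one into the other. Carrying those operations out block by block is precisely the column-by-column Gaussian elimination that the paper performs directly: after clearing column $j$ the entries below the pivot are the leading coefficients $h_{(j+1)M-k-j},\ldots$, and these together with the now-unshifted rows from the next block of $H_M$ allow the same elimination in column $j+1$. Your acknowledged ``main obstacle'' is therefore not a matter of bookkeeping; it is a genuine failure of the claimed matrix equality, and repairing it unwinds your induction back into the paper's straight reduction.
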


\begin{proof} 
The matrix $H_M$ is made of the coefficients of polynomials $f_0$, $f_1$, $\ldots$, $f_{M-1}$, arranged into shifted blocks:
$$H_M = \begin{pmatrix} a_{M-1} & a_{2M-1} & a_{3M-1} & \ldots \\
a_{M-2} & a_{2M-2} & a_{3M-2} & \ldots \\
\vdots & \vdots & \vdots & \ddots \\
a_1&a_{M+1}&a_{2M+1}&\ldots\\
a_{0} & a_{M} & a_{2M} & \cdots \\ \vdots & \vdots & \vdots & \ddots
\end{pmatrix}. $$
By \eqref{GoSo2} there are $M-1$ special minors of order $r=1$. Their values are
\[
h_{M-1}=a_{M-1}=H_M(1,1)>0,\qquad \ldots, \qquad h_1=a_1=H_M(M-1,1)>0,
\]
as is claimed by \eqref{svk110} for $r=1$.

There are $M-1$ special minors of order $2$ corresponding to the following matrices:
\[
\begin{pmatrix}a_{M-1}&a_{2M-1}\\a_{M-2}&a_{2M-2}\end{pmatrix},\;\begin{pmatrix}a_{M-2}&a_{2M-2}\\a_{M-3}&a_{2M-3}\end{pmatrix},\ldots,\begin{pmatrix}a_{1}&a_{M+1}\\a_{0}&a_{M}\end{pmatrix}.
\]
To evaluate the determinants of these matrices we apply Gauss elimination. Excluding $a_0$ in the last matrix using $a_1$ from the first row, we obtain a row equivalent matrix:
\[
\begin{pmatrix}a_{1}&a_{M+1}\\a_{0}&a_{M}\end{pmatrix}\sim \begin{pmatrix}a_{1}&a_{M+1}\\0&h_{M}\end{pmatrix}.
\]
Indeed
\[
\begin{aligned}
f_0(x)&=a_0x^n+a_Mx^{n-M}+a_{2M}x^{n-2M}+\cdots;\\
f_1(x)&=a_1x^{n-1}+a_{M+1}x^{n-M-1}+a_{2M+1}x^{n-2M-1}+\cdots.
\end{aligned}
\]
Since $f_0(x)=\frac{a_0}{a_1}xf_1(x)+f_M(x)$, we obtain that
\[
f_M(x)=\left(a_M-\frac{a_0}{a_1}a_{M+1}\right)x^{n-M}+\left(a_{2M}-\frac{a_0}{a_1}a_{2M+1}\right)x^{n-2M}+\cdots.
\]
To summarize, this elimination results in annihilation of $a_0$, $a_0\rightarrow 0$, and in the replacement of all other coefficients in the row of $a_0$ by the coefficients of the polynomial $f_M$. In particular, $a_M\rightarrow h_M$.
Similarly,
\[
\begin{pmatrix}a_{2}&a_{M+2}\\a_{1}&a_{M+1}\end{pmatrix}\sim \begin{pmatrix}a_{2}&a_{M+2}\\0&h_{M+1}\end{pmatrix},\;\ldots\;,
\begin{pmatrix}a_{M-1}&a_{2M-1}\\a_{M-2}&a_{2M-2}\end{pmatrix}\sim \begin{pmatrix}a_{M-1}&a_{2M-1}\\0&h_{2M-2}\end{pmatrix}.
\]
In other words, $a_k\rightarrow 0$ and $a_{M+k}\rightarrow h_{M+k}$.
This proves  \eqref{svk110} for $r=2$.

Let $r>2$ and $1\leq k\leq M-1$. The minor $H(k,r)$ is the determinant of the square matrix
\[
\begin{pmatrix} a_{M-k}& a_{2M-k}&a_{3M-k}&\cdots&a_{rM-k}\\
\vdots&\vdots&\vdots&\ddots&\vdots\\
a_{1} & a_{M+1} & a_{2M+1} & \cdots& a_{(r-1)M+1}\\
a_{0} & a_{M} & a_{2M} & \cdots &a_{(r-1)M}\\
0&a_{M-1}&a_{2M-1}&\cdots&a_{(r-1)M-1}\\
\vdots & \vdots & \vdots & \ddots &\vdots\\
\end{pmatrix}.
\]
Eliminating $a_0$ from the first column using $a_1$, then $a_1$ using $a_2$, and finally $a_{M-k-1}$ using $a_{M-k}$, we see that the above matrix is row equivalent to the matrix
\[
\begin{pmatrix} a_{M-k}& a_{2M-k}&a_{3M-k}&\cdots&a_{rM-k}\\
0&h_{2M-k-1}&h^*_{3M-k-1}&\cdots&h^*_{rM-k-1}\\
\vdots&\vdots&\vdots&\ddots&\vdots\\
0 & h_{M+1} & h^*_{2M+1} & \cdots& h^*_{(r-1)M+1}\\
0 & h_{M} & h^*_{2M} & \cdots &h^*_{(r-1)M}\\
0&a_{M-1}&a_{2M-1}&\cdots&a_{(r-1)M-1}\\
\vdots & \vdots & \vdots & \ddots &\vdots\\
\end{pmatrix},
\]
where the stars denote the coefficients of the polynomials  $f_{2M-k-1}$,$\dots$, $f_{M+1}$, $f_M$. 

Observe that the second column of this matrix ends either with zeros or with some $a_j$, $M-1\geq j\geq 0$. The rows of the matrix with the same indices make up the sequence of the coefficients of the polynomials $f_j$, $\ldots$, $f_{M-1}$, $\ldots$, $f_{2M-k-1}$. It follows that we can run the same elimination process in column two as we did already in column one. As a result, we see that our matrix is row equivalent to
\[
\begin{pmatrix} a_{M-k}& a_{2M-k}&a_{3M-k}&\cdots&a_{rM-k}\\
0&h_{2M-k-1}&h^*_{3M-k-1}&\cdots&h^*_{rM-k-1}\\
0&0&h_{3M-k-2}&\cdots&*\\
\vdots&\vdots&\vdots&\ddots&\vdots\\
0 & 0 & h_{2M+1} & \cdots& *\\
0 & 0 & h_{2M} & \cdots &*\\
0&0&h_{2M-1}&\cdots&*\\
\vdots & \vdots & \vdots & \ddots &\vdots\\
\end{pmatrix}.
\]
Thus the elimination can be continued until we obtain a diagonal matrix which is row equivalent to the initial matrix:
\[
\begin{pmatrix} a_{M-k}& a_{2M-k}&a_{3M-k}&\cdots&a_{rM-k}\\
0&h_{2M-k-1}&h^*_{3M-k-1}&\cdots&h^*_{rM-k-1}\\
0&0&h_{3M-k-2}&\cdots&*\\
\vdots&\vdots&\vdots&\ddots&\vdots\\
0 & 0 & 0 & \cdots& *\\
0 & 0 & 0 & \cdots &*\\
0&0&0&\cdots&*\\
\vdots & \vdots & \vdots & \ddots &\vdots\\
0&0&0&\cdots&h_{rM-k-(r-1)}\\
\end{pmatrix}.
\]
It follows that
\[
H_M(k,r)=h_{M-k}\cdot h_{2M-k-1}\cdots h_{rM-k-1},
\]
which proves the lemma.
\end{proof}
\begin{corollary}\label{cor31}
Let $a_0 > 0$. Then  the leading coefficients of the polynomials
$f_i$, $i = 1, \ \ldots, \ n$, satisfy
$$h_i = \dfrac{H_M(k+1,r)}{H_M(k+1,r-1)}, $$
where $r = \left\lceil\dfrac{i}{M-1}\right\rceil$, $k = r(M-1) - i$, $H_M(k+1,0):=1$.
\end{corollary}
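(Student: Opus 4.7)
The plan is to extract the corollary as an immediate consequence of Lemma \ref{lem31}, by reading off a closed-form product expression for $H_M(k,r)$ and taking a telescoping ratio. From the displayed equations \eqref{svk110} (equivalently, from the diagonalized matrix produced at the end of the proof of Lemma \ref{lem31}) one sees that, uniformly in $k\in\{1,\ldots,M-1\}$,
\begin{equation*}
H_M(k,r) \;=\; \prod_{j=1}^{r} h_{\,j(M-1)\,-\,(k-1)}.
\end{equation*}
Indeed, for $k=1$ this reads $h_{M-1}h_{2(M-1)}\cdots h_{r(M-1)}$; for $k=2$ it reads $h_{M-2}h_{2M-3}\cdots h_{r(M-1)-1}$; and so on down to $k=M-1$, matching each of the lines of \eqref{svk110}.

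Shifting $k\mapsto k+1$ and forming the ratio gives a telescoping cancellation of the first $r-1$ factors:
\begin{equation*}
\frac{H_M(k+1,r)}{H_M(k+1,r-1)} \;=\; \frac{\prod_{j=1}^{r} h_{j(M-1)-k}}{\prod_{j=1}^{r-1} h_{j(M-1)-k}} \;=\; h_{r(M-1)-k}.
\end{equation*}
The prescription $r=\lceil i/(M-1)\rceil$ and $k=r(M-1)-i$ is precisely what makes the exponent on the right equal to $i$, so the ratio is $h_i$, as claimed. The boundary convention $H_M(k+1,0):=1$ matches the empty product in the $r=1$ case.

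The only genuine verification needed is that the chosen pair $(k+1,r)$ lies in the range where Lemma \ref{lem31} applies, i.e.\ $k+1\in\{1,\ldots,M-1\}$. Since $r=\lceil i/(M-1)\rceil$ forces $(r-1)(M-1)<i\le r(M-1)$, we get $0\le k=r(M-1)-i\le M-2$, so indeed $k+1\in\{1,\ldots,M-1\}$, and $r$ is in the allowed range as well. There is no real obstacle here; the work has already been absorbed into the proof of Lemma \ref{lem31}, and the corollary is essentially a change of index conventions (from the pair $(k,r)$ indexing minors of $H_M$ to the single index $i$ indexing the polynomials $f_i$).
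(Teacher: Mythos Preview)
Your proof is correct and is essentially the same as the paper's: both simply read off the product structure of $H_M(k,r)$ from Lemma~\ref{lem31}, take the telescoping ratio, and reindex $(k,r)\leftrightarrow i$. Your presentation is a bit more compact (you write the uniform product formula explicitly and check the index ranges), but there is no substantive difference in method.
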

\begin{proof} Applying Lemma~\ref{lem31}, we obtain:
\begin{align*} h_{rM-r} & =  \ \dfrac{H_M(1,r)}{H_M(1,r-1)} \\
h_{rM-(r+1)} & = \ \dfrac{H_M(2,r)}{H_M(2,r-1)} \\
 \cdots \cdots \cdots \cdots & \cdots \cdots  \cdots \cdots \cdots \cdots   \\
h_{rM-(r+M-2)} & = \ \dfrac{H_M(M-1,r)}{H_M(M-1,r-1)}. 
\end{align*}
Thus $$\ \  h_{rM-(r+k)} \  =  \ \dfrac{H_M(k+1,r)}{H_M(k+1,r-1)},$$ where $0 \leq k \leq M-2$.
Assuming $i = rM-(r+k)$, we obtain $r = \left\lceil\dfrac{i}{M-1}\right\rceil$ and $k = r(M-1) - i$.
 \hfill \qed \end{proof} \medskip

\begin{example} Let us consider an example for the case $n=6$, $M=3$. Then
\[
 f(x) = a_0x^6 + a_1x^5+a_2x^4+a_3x^3+a_4x^2 + a_5x+ a_6.
\]
The generalized Hurwitz matrix looks like
\[
H_3 = \begin{pmatrix} a_2 & a_5 & 0&0&\ldots \\
a_1 & a_4 & 0&0&\ldots \\
a_0 & a_3 & a_6&0&\ldots \\
0 & a_2 & a_5&0&\ldots \\
\vdots&\vdots&\vdots&\vdots&\ddots\\
\end{pmatrix}
\]
There are six special minors:
$$ H_3(1,1) = a_2, \ H_3(1,2) = \begin{vmatrix} a_2 & a_5 \\
a_1 & a_4 \end{vmatrix}, \ H_3(1,3) = \begin{vmatrix} a_2 & a_5 & 0 \\a_1 & a_4 & 0 \\ a_0 & a_3 & a_6 \\ \end{vmatrix}.$$
$$ H_3(2,1) = a_1, \ H_3(2,2) = \begin{vmatrix} a_1 & a_4 \\
a_0 & a_3 \end{vmatrix}, \ H_3(2,3) = \begin{vmatrix} a_1 & a_4 & 0 \\a_0 & a_3 & a_6 \\ 0 & a_2 & a_5 \\ \end{vmatrix}.$$

By the generalized Euclidean algorithm,
\[
\begin{aligned}
f_0(x)&=a_0x^6 + a_3x^3 + a_6\\
f_1(x)&= a_1x^5 + a_4x^2\\
f_2(x)&=a_2x^4 + a_5x
\end{aligned}\quad \Longrightarrow \quad\begin{aligned}f_3(x)&=\left(a_3-\frac{a_0a_4}{a_1}\right)x^3+a_6\\
f_4(x)&=\left(a_4-\frac{a_1a_5}{a_2}\right)x^2\\
f_5(x)&=\left(a_5-\frac{a_1a_2a_6}{a_1a_3-a_0a_4}\right)x\\
f_6(x)&=a_6.
\end{aligned}
\]
We have:
\[
h_0=a_0,\; h_1=a_1=H_3(2,1),\; h_2=a_2=H_3(1,1);
\]
\[
h_3 = \dfrac{1}{a_1}(a_1a_3 - a_0a_4) = \dfrac{1}{a_1}\begin{vmatrix} a_1 & a_4 \\
a_0 & a_3 \end{vmatrix} = \dfrac{H_3(2,2)}{H_3(2,1)};
\]
\[
h_4 = \dfrac{1}{a_2}(a_2a_4 - a_1a_5) = \dfrac{1}{a_2}\begin{vmatrix} a_2 & a_5 \\
a_1 & a_4 \end{vmatrix} = \dfrac{H_3(1,2)}{H_3(1,1)};
\]
\[
h_5 = \dfrac{1}{a_1} \dfrac{1}{h_3}(a_5a_3a_1 - a_5a_0a_4 - a_1a_2a_6) = \dfrac{1}{a_1}\dfrac{1}{h_3}\begin{vmatrix} a_1 & a_4 & 0 \\a_0 & a_3 & a_6 \\ 0 & a_2 & a_5 \\ \end{vmatrix} = \dfrac{H_3(2,3)}{H_3(2,2)};
\]
\[
h_6 = a_6 = \dfrac{H_3(1,3)}{H_3(1,2)}.
\]
\end{example}


\section{Submatrices of generalized Hurwitz matrices}\label{sec:submatrices}
Let $H_M(f)$ be a generalized Hurwitz matrix associated to a polynomial
\[
f(x) = a_0x^n + a_1x^{n-1} + \cdots + a_n.
\]
We denote by $H_M^{(ij)}$ its infinite submatrix determined by two polynomials $f_i$ and $f_j$, $0 \leq i < j \leq M-1$:
$$ H_M^{(ij)} = \begin{pmatrix}
a_j & a_{M+j} & a_{2M+j} & a_{3M+j} & \ldots \\
a_i & a_{M+i} & a_{2M+i} & a_{3M+i} & \ldots \\
0 & a_j & a_{M+j} & a_{2M+j} & \ldots \\
0 & a_i & a_{M+i} & a_{2M+i} & \ldots \\
\vdots & \vdots & \vdots & \vdots & \ddots 
\end{pmatrix}.$$
This matrix is the (ordinary) Hurwitz matrix of the polynomial
\[
P^{(ij)}(x)=\sum_{k\geq 0}(a_{kM+i}x^{m-2k}+a_{kM+j}x^{m-2k-1}).
\]
Its arithmetic polynomials for $M=2$ are given by
\begin{equation}\label{s456723}
P^{(ij)}_0(x)=\sum_{k\geq 0}a_{kM+i}x^{m-2k},\quad P^{(ij)}_1(x)=\sum_{k\geq 0}a_{kM+j}x^{m-2k-1}.
\end{equation}

\begin{lemma}\label{LemPtoF}  The even and odd parts of the polynomial $P^{(ij)}(x)$ satisfy
\[
P^{(ij)}_0\left(x^M\right)=x^{m\cdot M-2(n-i)}f_i\left(x^2\right),\quad P^{(ij)}_1\left(x^M\right)=x^{(m-1)\cdot M-2(n-j)}f_j\left(x^2\right).
\]
\end{lemma}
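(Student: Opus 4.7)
The lemma is essentially a bookkeeping identity: both sides are monomial-by-monomial the same after one substitutes the definitions of $P^{(ij)}_0$, $P^{(ij)}_1$ and $f_i$, $f_j$. My plan is to verify each of the two stated identities by expanding both sides as sums over $k\geq 0$ and matching powers of $x$.

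First I would recall that, by \eqref{svk9056},
\[
f_i(y)=\sum_{k\geq 0} a_{kM+i}\, y^{\,n-kM-i},
\]
so that $f_i(x^2)=\sum_{k\ge 0} a_{kM+i}\,x^{2(n-kM-i)}$. Multiplying by $x^{mM-2(n-i)}$ shifts every exponent uniformly, giving
\[
x^{mM-2(n-i)}f_i(x^2)=\sum_{k\ge 0} a_{kM+i}\,x^{mM-2(n-i)+2(n-kM-i)}=\sum_{k\ge 0} a_{kM+i}\,x^{mM-2kM}.
\]
On the other hand, the definition \eqref{s456723} gives $P^{(ij)}_0(x^M)=\sum_{k\ge 0}a_{kM+i}\,x^{M(m-2k)}=\sum_{k\ge 0}a_{kM+i}\,x^{mM-2kM}$, which matches term-by-term. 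Thus the first identity drops out.

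For the second identity the calculation is entirely parallel. Expanding
\[
x^{(m-1)M-2(n-j)}f_j(x^2)=\sum_{k\ge 0} a_{kM+j}\,x^{(m-1)M-2(n-j)+2(n-kM-j)}=\sum_{k\ge 0} a_{kM+j}\,x^{mM-M-2kM},
\]
and noting that $P^{(ij)}_1(x^M)=\sum_{k\ge 0}a_{kM+j}\,x^{M(m-2k-1)}=\sum_{k\ge 0}a_{kM+j}\,x^{mM-M-2kM}$, the two series agree monomial by monomial.

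There is really no obstacle here beyond keeping the indices straight; the only subtle point is making sure that the prefactors $mM-2(n-i)$ and $(m-1)M-2(n-j)$ are the unique exponents that align the powers, which follows immediately from the arithmetic above (and in particular explains why these specific prefactors appear in the statement). One might add a brief sanity check that the leading exponent on each side coincides—namely $mM$ on both sides of the first identity (attained at $k=0$), and $mM-M$ on both sides of the second—to confirm that the shifts are consistent with $P^{(ij)}$ being a polynomial of degree $m$ in the variable $x$.
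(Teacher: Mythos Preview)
Your proof is correct and takes essentially the same approach as the paper's own proof: both simply substitute the definitions \eqref{svk9056} and \eqref{s456723} and verify that the exponents match term by term. The only cosmetic difference is that the paper starts from $P^{(ij)}_0(x^M)$ and factors out the power of $x$, while you start from the right-hand side and expand; the arithmetic is identical.
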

\begin{proof} By \eqref{svk9056} and \eqref{s456723} for $P^{(ij)}_0\left(x^M\right)$ we have
\[
P^{(ij)}_0(x^M)=x^{m\cdot M-2(n-i)}\sum_{k\geq 0}\,a_{kM+i}x^{2n-2(kM+i)}=
x^{m\cdot M-2(n-i)}f_i(x^2).
\]
Similarly,
\[ P^{(ij)}_1(x^M)=x^{(m-1)\cdot M-2(n-j)}\sum_{k\geq 0}\,a_{kM+j}x^{2n-2(kM+j)}=
x^{(m-1)\cdot M-2(n-j)}f_j(x^2), \]
as claimed. \hfill \qed
\end{proof} 

\begin{lemma}\label{degreeformula} Let $f(x) = f_0(x)+f_1(x)+ \cdots + f_{M-1}(x)$ and let all the coefficients $h_0, \ \ldots, \ h_n$ in the generalized Euclidean algorithm with step $M$ be positive.
The degree $m$ of the polynomial $P^{(ij)}(x)$ whose Hurwitz matrix is $H_M^{(ij)}$ is given by
\[
m=\begin{cases}2\Big\lfloor\dfrac{n-i}{M}\Big\rfloor\,&\,\text{ if }\,\Big\lfloor\dfrac{n-i}{M}\Big\rfloor=\Big\lceil\dfrac{n-j}{M}\Big\rceil\\[2mm]
2\Big\lfloor\dfrac{n-i}{M}\Big\rfloor+1\,&\,\text{ if }\,\Big\lfloor\dfrac{n-i}{M}\Big\rfloor=\Big\lfloor\dfrac{n-j}{M}\Big\rfloor\,.  \end{cases}
\]
\end{lemma}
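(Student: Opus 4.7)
The plan is to read the degree of $P^{(ij)}$ directly off its defining sum. The non-degeneracy hypothesis that all $h_0,\ldots,h_n$ are positive allows me to invoke the corollary to Theorem~\ref{EuHur}, which guarantees every coefficient $a_l$ with $0\le l\le n$ is strictly positive. Hence $a_{kM+i}\ne 0$ iff $0\le k\le \lfloor(n-i)/M\rfloor$, and $a_{kM+j}\ne 0$ iff $0\le k\le \lfloor(n-j)/M\rfloor$; no hidden cancellations will trim the polynomial.

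From the definition $P^{(ij)}(x)=\sum_{k\ge 0}(a_{kM+i}x^{m-2k}+a_{kM+j}x^{m-2k-1})$, the smallest exponent contributed by the even-indexed terms is $m-2\lfloor(n-i)/M\rfloor$ and the smallest from the odd-indexed terms is $m-2\lfloor(n-j)/M\rfloor-1$. For $P^{(ij)}$ to genuinely be a polynomial of degree exactly $m$ in the natural sense (with $P^{(ij)}(0)\ne 0$, so that no further division by $x$ is possible without creating negative exponents), $m$ must be the smallest integer with both of those exponents $\ge 0$, i.e.,
\[
m=\max\bigl(2\lfloor(n-i)/M\rfloor,\ 2\lfloor(n-j)/M\rfloor+1\bigr).
\]
Since the two candidates differ in parity they are never equal, so exactly one of them achieves the max.

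It remains to recast this max in the form appearing in the lemma. Since $1\le j-i\le M-1$ and $n-i=(n-j)+(j-i)$, the difference $\lfloor(n-i)/M\rfloor-\lfloor(n-j)/M\rfloor$ is either $0$ or $1$. Writing $n-j=qM+s$ with $0\le s\le M-1$: when $s+(j-i)<M$, that difference is $0$, the odd candidate $2q+1$ wins, and we are in Case 2 of the lemma ($\lfloor(n-i)/M\rfloor=\lfloor(n-j)/M\rfloor$), with $m=2\lfloor(n-i)/M\rfloor+1$. When $s+(j-i)\ge M$, the difference is $1$; in this sub-case $s\ge M-(j-i)\ge 1$ forces $M\nmid(n-j)$, so $\lceil(n-j)/M\rceil=q+1=\lfloor(n-i)/M\rfloor$, placing us in Case 1 of the lemma and yielding $m=2\lfloor(n-i)/M\rfloor$.

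The main—indeed essentially the only—difficulty here is the last step of bookkeeping: verifying that the two conditions stated in the lemma (one involving a ceiling, the other only floors) partition the possibilities cleanly and correspond exactly to the ``even-wins'' versus ``odd-wins'' dichotomy in the max. Everything else reduces to the observation that non-degeneracy eliminates accidental zeros among the $a_l$.
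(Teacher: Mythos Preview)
Your argument is correct and in fact more direct than the paper's. You read off $m$ as the index of the last nonzero coefficient of $P^{(ij)}$: once the corollary to Theorem~\ref{EuHur} gives $a_l>0$ for every $0\le l\le n$, the last nonzero even-indexed coefficient is $b_{2\lfloor(n-i)/M\rfloor}$ and the last nonzero odd-indexed one is $b_{2\lfloor(n-j)/M\rfloor+1}$, so $m=\max\bigl(2\lfloor(n-i)/M\rfloor,\,2\lfloor(n-j)/M\rfloor+1\bigr)$; your floor/ceiling case split then matches this to the two alternatives in the lemma, and it is clean and complete.

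The paper takes a different route. It observes that $H_M^{(ij)}$ sits inside $H_M$, so Theorem~\ref{Theorem2.2}(c) applies: a leading principal minor of $H_M^{(ij)}$ is positive exactly when all its diagonal entries are. Those diagonal entries are $a_j,a_{M+i},a_{M+j},a_{2M+i},\ldots$, and the paper counts how many are nonzero before the sequence terminates, splitting into the same two cases you found. It then invokes the Routh--Hurwitz theorem to identify that count with $\deg P^{(ij)}$. Your approach bypasses the minor machinery entirely; the paper's detour has the incidental benefit of simultaneously establishing that all relevant leading principal minors of $H_M^{(ij)}$ are positive, which it uses immediately afterward to conclude stability of $P^{(ij)}$.
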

\begin{proof} To find a formula for $m$ we observe that
$H_M^{(ij)}$ is an
infinite-dimensional submatrix of $H_M$ which is obtained from $H_M$ by keeping only rows corresponding to the coefficients of $f_i$ and $f_j$. Thus, the principal minors of $H_M^{(ij)}$ coincide with some minors of $H_M$.
By Theorem \ref{Theorem2.2} (c), the leading principal minors of $H_M^{(ij)}$ are strictly positive if and only if their diagonal elements do not vanish. The diagonal elements of $H_M^{(ij)}$ are:
\begin{equation*}
a_j,\;a_{M+i},\;a_{M+j},\;a_{2M+i},\;a_{2M+j},\;a_{3M+i},\;a_{3M+j},\;\ldots\;.
\end{equation*}
This sequence of positive numbers terminates as soon as either $kM+i$ or $kM+j$ becomes greater than $n$ for the first time. The entry $a_{kM+i}$ is the last nonzero element if and only if
\[
kM+i\leq n<kM+j\Longleftrightarrow \frac{n-j}{M}<k\leq\frac{n-i}{M}\Longleftrightarrow k=\bigg\lfloor\frac{n-i}{M}\bigg\rfloor=\bigg\lceil\frac{n-j}{M}\bigg\rceil,
\]
since the difference between the bounds for $k$ is strictly smaller than $1$. The entry $a_{kM+j}$ is the last nonzero element if and only if $kM+j\leq n<(k+1)M+i$ or, equivalently,
\[
k \leq \frac{n-j}{M}<\frac{n-i}{M}<k+1\Longleftrightarrow k=\bigg\lfloor\frac{n-i}{M}\bigg\rfloor=\bigg\lfloor\frac{n-j}{M}\bigg\rfloor.
\]
There are $2\lfloor(n-i)/M\rfloor$ nonzero leading principal minors of $H_M^{(ij)}$ in the first case and $2\lfloor(n-i)/M\rfloor+1$ nonzero leading principal minors in the second. By Routh-Hurwitz Theorem~\ref{HurwitzTheorem}, the number of nonzero leading principal minors equals the degree $m$ of the polynomial $P^{(ij)}(x)$. \hfill
\qed
\end{proof}

\bigskip

For our next (somewhat technical) lemma, we need to define $\alpha$ and $\beta$ by
\begin{equation}\label{svkalpha}
\alpha=\frac{n-i}{M}-\bigg\lfloor\frac{n-i}{M}\bigg\rfloor,\quad \beta=\frac{n-j}{M}-\bigg\lfloor\frac{n-j}{M}\bigg\rfloor.
\end{equation}

\begin{lemma}\label{LemPtoF2} Let $\varepsilon(m)=\frac{1-(-1)^m}{2}$. Then
\[
P^{(ij)}_0\left(x\right)=x^{\varepsilon(m)-2\alpha}f_i\left(x^{\frac{2}{M}}\right),\quad P^{(ij)}_1\left(x\right)=x\cdot x^{-\varepsilon(m)-2\beta}f_j\left(x^{\frac{2}{M}}\right).
\]
\end{lemma}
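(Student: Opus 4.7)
The plan is to derive the claimed formulas directly from Lemma~\ref{LemPtoF} by substituting $x \mapsto x^{1/M}$ and then matching the resulting exponents against $\varepsilon(m) - 2\alpha$ and $1 - \varepsilon(m) - 2\beta$ via the case analysis of Lemma~\ref{degreeformula}.

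First, I would apply the substitution $x\mapsto x^{1/M}$ to the identities of Lemma~\ref{LemPtoF}, obtaining
\[
P^{(ij)}_0(x)=x^{m-2(n-i)/M}f_i(x^{2/M}), \qquad P^{(ij)}_1(x)=x^{(m-1)-2(n-j)/M}f_j(x^{2/M}).
\]
Using the definition \eqref{svkalpha}, I would rewrite $2(n-i)/M = 2\lfloor(n-i)/M\rfloor + 2\alpha$ and $2(n-j)/M = 2\lfloor(n-j)/M\rfloor + 2\beta$, so the two exponents become $m-2\lfloor(n-i)/M\rfloor - 2\alpha$ and $(m-1)-2\lfloor(n-j)/M\rfloor - 2\beta$, respectively.

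It then suffices to verify the two arithmetic identities $m - 2\lfloor(n-i)/M\rfloor = \varepsilon(m)$ and $m - 2\lfloor(n-j)/M\rfloor = 2 - \varepsilon(m)$. The first follows immediately from Lemma~\ref{degreeformula}, since in both cases there $m - 2\lfloor(n-i)/M\rfloor \in\{0,1\}$, and this value is exactly $0$ when $m$ is even and $1$ when $m$ is odd, which is precisely $\varepsilon(m)$. For the second identity I would split according to the two cases of Lemma~\ref{degreeformula}: in the even case, $\lceil(n-j)/M\rceil = \lfloor(n-j)/M\rfloor + 1$ (when $\beta>0$), whence $m = 2\lfloor(n-j)/M\rfloor + 2$ and $2 - \varepsilon(m) = 2$; in the odd case, $\lfloor(n-i)/M\rfloor = \lfloor(n-j)/M\rfloor$ gives $m = 2\lfloor(n-j)/M\rfloor + 1$ and $2 - \varepsilon(m) = 1$. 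Both match.

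The main obstacle, though minor, will be resolving the boundary situation $\beta = 0$, i.e., when $(n-j)/M$ is an integer: the two cases of Lemma~\ref{degreeformula} appear to coincide. I would dispose of this by tracing back through the proof of Lemma~\ref{degreeformula} to verify that when $\beta=0$ the last nonzero diagonal entry of $H_M^{(ij)}$ is $a_{kM+j} = a_n$, so the operative case is the odd one ($m = 2\lfloor(n-i)/M\rfloor + 1$), in which the identity $m - 2\lfloor(n-j)/M\rfloor = 2 - \varepsilon(m)$ has already been checked above.
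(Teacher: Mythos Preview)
Your proposal is correct and follows essentially the same route as the paper: substitute $x\mapsto x^{1/M}$ in Lemma~\ref{LemPtoF} and then use Lemma~\ref{degreeformula} to identify the exponents $m-2(n-i)/M$ and $m-1-2(n-j)/M$ with $\varepsilon(m)-2\alpha$ and $1-\varepsilon(m)-2\beta$. Your treatment of the boundary case $\beta=0$ is in fact more explicit than the paper's; the paper simply asserts that in the even case the quantity $\lceil (n-j)/M\rceil-(n-j)/M$ lies in $(0,1)$ and deduces $(n-j)/M\notin\mathbb{Z}$, whereas you spell out why the odd case of Lemma~\ref{degreeformula} is the operative one when $\beta=0$.
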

\begin{proof} By Lemma \ref{LemPtoF},
\[
P^{(ij)}_0(x)=x^{m-2\frac{n-i}{M}}f_i\left(x^{\frac{2}{M}}\right),\quad P^{(ij)}_1(x)=x^{m-1-2\frac{n-i}{M}}f_i\left(x^{\frac{2}{M}}\right).
\]
By Lemma \ref{degreeformula},
\[
m-2\frac{n-i}{M}=\begin{cases}-2\alpha\;&\text{ if }\; m\;\text{is even}\\
-2\alpha+1\;&\text{ if }\; m\;\text{is odd,}\end{cases}
\]
which proves the first formula. To prove the second one, we observe that
\[
m-2\frac{n-j}{M}-1=\begin{cases}2\left(\Big\lfloor\frac{n-i}{M}\Big\rfloor-\frac{n-j}{M}\right)-1\;&\text{ if }\; m\;\text{is even}\\[2mm]
2\left(\Big\lfloor\frac{n-i}{M}\Big\rfloor-\frac{n-j}{M}\right)\;&\text{ if }\; m\;\text{is odd.}
\end{cases}
\]
By Lemma \ref{degreeformula},
\[
2\left(\Big\lfloor\frac{n-i}{M}\Big\rfloor-\frac{n-j}{M}\right)=2\left(\Big\lfloor\frac{n-j}{M}\Big\rfloor-\frac{n-j}{M}\right)=-2\beta.
\]
if $m$ is odd. For even $m$ we have
\[
\beta=\left(\bigg\lfloor\frac{n-i}{M}\bigg\rfloor-\frac{n-j}{M}\right)=\left(\bigg\lceil\frac{n-j}{M}\bigg\rceil-\frac{n-j}{M}\right)\in (0,1).
\]
It follows that $\dfrac{n-j}{M}$ cannot be an integer and therefore
\[
\bigg\lceil\frac{n-j}{M}\bigg\rceil=\bigg\lfloor\frac{n-j}{M}\bigg\rfloor+1,
\]
implying the second formula.
\qed
\end{proof}

\begin{corollary}
Let $f(x) = f_0(x)+f_1(x)+ \cdots + f_{M-1}(x)$ and let all the coefficients $h_0, \ \ldots, \ h_n$ in the generalized Euclidean algorithm with step $M$ be positive. Then
\begin{itemize}
\item[$(a)$]\;every polynomial
\[
P^{(ij)}(x) =P^{(ij)}_0(x)+P^{(ij)}_1(x)=x^{\varepsilon(m)-2\alpha}f_i\left(x^{{2}/{M}}\right)+x\cdot x^{-\varepsilon(m)-2\beta}f_j\left(x^{{2}/ {M}}\right)
\]
is stable;
\item[$(b)$] all coefficients $h^{ij}_0,\,h^{ij}_1 \ldots, \, h^{ij}_k$ in the (ordinary) Euclidean algorithm applied to the pair of polynomials $(f_i, f_j)$ are positive.
    \end{itemize}
\end{corollary}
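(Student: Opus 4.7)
The plan is to exploit the dual interpretation of $H_M^{(ij)}$: it is simultaneously an infinite submatrix of the generalized Hurwitz matrix $H_M$ (hence inheriting its total nonnegativity from Theorem \ref{EuHur}) and the ordinary Hurwitz matrix of the polynomial $P^{(ij)}$. Both (a) and (b) then follow from applying the classical Routh--Hurwitz theory to $P^{(ij)}$.

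For (a), under the hypothesis $h_0, \ldots, h_n > 0$, Theorem \ref{EuHur} gives $\Delta_p > 0$ for all $p$, so Theorem \ref{Theorem2.2} applies. In particular, by part (c) of that theorem, a minor of $H_M$ is strictly positive iff its diagonal entries are. The $r \times r$ leading principal minor of $H_M^{(ij)}$ is a specific minor of $H_M$, and the diagonal analysis already carried out in the proof of Lemma \ref{degreeformula} shows that exactly the first $m$ such leading principal minors have nonvanishing diagonals, where $m = \deg P^{(ij)}$. Hence all $m$ leading principal minors of the ordinary Hurwitz matrix $H_M^{(ij)}$ of $P^{(ij)}$ are strictly positive, and the Routh--Hurwitz criterion (Theorem \ref{HurwitzTheorem}) yields the stability of $P^{(ij)}$.

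For (b), I would relate the ordinary Euclidean algorithm on the pair $(f_i, f_j)$ to the one on $(P^{(ij)}_0, P^{(ij)}_1)$ via the substitution $y = x^{2/M}$ from Lemma \ref{LemPtoF2}. The key claim is that if $\{f^{ij}_\ell\}$ and $\{P^{(ij)}_\ell\}$ denote the successive remainders generated by the two algorithms, then for suitable integer exponents $c_\ell$ one has $P^{(ij)}_\ell(x) = x^{c_\ell} f^{ij}_\ell(x^{2/M})$, so that the leading coefficient of $P^{(ij)}_\ell$ equals $h^{ij}_\ell$. The cases $\ell=0,1$ are exactly Lemma \ref{LemPtoF2}, and the inductive step follows because the monomial quotients alternating between exponents $j-i$ and $M-(j-i)$ in the $(f_i, f_j)$ algorithm correspond, under the substitution, to linear monomial quotients in the $(P^{(ij)}_0, P^{(ij)}_1)$ algorithm. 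Given this correspondence, the stability of $P^{(ij)}$ from (a) combined with the classical Routh--Hurwitz scheme (positivity of the leading coefficients in the Euclidean algorithm applied to the even/odd parts of a stable polynomial, realized as ratios of consecutive leading principal minors of the associated Hurwitz matrix, in the spirit of Corollary \ref{cor31}) forces $h^{ij}_\ell > 0$ for every $\ell$.

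The main technical obstacle will be the inductive verification of the substitution identity $P^{(ij)}_\ell(x) = x^{c_\ell} f^{ij}_\ell(x^{2/M})$ and the determination of the correct exponents $c_\ell$. These exponents alternate in parity, paralleling the alternating $(j-i)$ and $(M-(j-i))$ in the continued fraction \eqref{ku5}, and checking that both sides remain genuine polynomials (with integer exponents) requires the same sort of parity bookkeeping used in the proof of Lemma \ref{LemPtoF2}. Once established, however, the reduction of part (b) to the classical scalar Routh--Hurwitz theory is formal.
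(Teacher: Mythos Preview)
Your proof of (a) is exactly the paper's: the leading principal minors of $H_M^{(ij)}$ are positive by the diagonal analysis of Lemma~\ref{degreeformula} combined with Theorem~\ref{Theorem2.2}(c), and Routh--Hurwitz then gives stability of $P^{(ij)}$.

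For (b), your approach is correct and rests on the same substitution idea as the paper, but the packaging differs. The paper works at the level of continued fractions rather than remainders: from Lemma~\ref{LemPtoF} it writes $R_{ij}(x)=x^{(j-i)-M/2}\,P^{(ij)}_0(x^{M/2})/P^{(ij)}_1(x^{M/2})$, invokes the classical fact (cited from \cite{JT}) that the $C$-fraction of $P^{(ij)}_0/P^{(ij)}_1$ for a stable $P^{(ij)}$ has all positive partial numerators $d_\ell$, substitutes $x^{M/2}$ into that $C$-fraction, and rearranges the resulting expression into the form~\eqref{ku5}; uniqueness of $C$-fractions then forces $d_\ell=h^{ij}_{\ell-1}/h^{ij}_\ell>0$. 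Your route---carrying the substitution through the Euclidean remainders and showing $P^{(ij)}_\ell(x)=x^{c_\ell}f^{ij}_\ell(x^{2/M})$ with $c_\ell$ alternating between the two values $c_0,c_1$ of Lemma~\ref{LemPtoF2}---is the ``unrolled'' version of the same computation. It is slightly more self-contained (no appeal to the $C$-fraction uniqueness theorem), at the cost of the parity bookkeeping you anticipate; the paper's continued-fraction packaging absorbs that bookkeeping into a single substitution step.
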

\begin{proof} Item (a) follows from Lemma \ref{LemPtoF2} and the classical Routh-Hurwitz 
Theorem~\ref{HurwitzTheorem}.

To prove (b) we apply Lemma \ref{LemPtoF} with $x$ replaced by $\sqrt{x}$, $x>0$. We have
\[
R_{ij}(x)=\frac{f_i(x)}{f_j(x)}=x^{(j-i)-M/2}\cdot\frac{P^{(ij)}_0(x^{M/2})}{P^{(ij)}_1(x^{M/2})}.
\]
It is well known (see \cite[Corollary 7.33]{JT}) that the $C$-fraction expansion of the rational function
\[
\frac{P^{(ij)}_0(z)}{P^{(ij)}_1(z)}=d_1z+\dfrac{1}{d_2z {+} \dfrac{1}{d_3z + \dfrac{1}{\ddots+\dfrac{1}{d_sz}}}}
\]
has all positive coefficients $d_1$, $\ldots$, $d_s$ whenever $P(z)$ is a stable polynomial. It follows that
\begin{eqnarray*}
R_{ij}(x)&=&\frac{f_i(x)}{f_j(x)}=x^{(j-i)-M/2}\cdot\frac{P^{(ij)}_0(x^{M/2})}{P^{(ij)}_1(x^{M/2})}=
x^{(j-i)-M/2}\cdot
\Big\{d_1x^{M/2}+\dfrac{1}{d_2x^{M/2} + \dfrac{1}{d_3x^{M/2} +\cdots }} \Big\} \\
&=& d_1x^{(j-i)}+\dfrac{x^{(j-i)-M/2}}{d_2x^{M/2} + \dfrac{1}{d_3x^{M/2} +\cdots}}\,=
d_1x^{(j-i)}+\dfrac{1}{d_2x^{M-(j-i)} + \dfrac{x^{M/2-(j-i)}}{d_3x^{M/2} +\cdots}}\,
\\ &=& d_1x^{(j-i)}+\dfrac{1}{d_2x^{M-(j-i)} + \dfrac{1}{d_3x^{(j-i)} +\cdots}}\,.
\end{eqnarray*}
Comparing this continued fraction with the continued fraction in \eqref{ku5} and invoking the uniqueness theorem for $C$-fractions (see \cite{JT}), we obtain that
\[
0<d_l=\dfrac{h^{ij}_{\ell-1}}{h^{ij}_\ell}\Rightarrow h^{ij}_\ell>0,
\]
since $h^{ij}_0=a_i>0$ and $h^{ij}_1=a_j>0$.\qed
\end{proof}


\section{Main Results}\label{sec:proof}

This section is devoted to the proofs of  Theorem~\ref{maintheorem} for $M\geq 2$ and of our main Theorem~\ref{coolthm}. Note that these theorems differ both in assumptions and in conclusions.
In particular, the statement of Theorem~\ref{coolthm} includes the important case $M=1$, which has to be excluded in the formulation of Theorem~\ref{maintheorem}. We hope this does not unduly confuse the reader.  We shall begin with Theorem~\ref{maintheorem} and shall build the proof lemma by lemma, culminating in its proof. We shall then prove Theorem~\ref{coolthm} using a different method.

Given $M \geq 2$ nonzero numbers $w,\,z_1, \ \ldots, \ z_{M-1}$ in the cone 
$K_{0,\pi/M}=\left\{u: 0\leq \arg(u)\leq\dfrac{\pi}{M}\right\} $
and positive numbers $a_n>0$, $n\geq M$, we define recursively
\begin{equation}\label{svk11}z_n\eqbd a_nw + \frac{1}{z_{n-1}\cdot \ldots\cdot z_{n - (M-1)}}.
\end{equation}
For $n\geq M$ we denote
\[ {S}(n)\eqbd \{n-(M-2),\ldots,n\}
\]
and define the partition $S(n)={S}_{-}(n)\cup{S}_{+}(n)$ of this set by
\begin{equation*}\label{svk12}\begin{aligned}
{S}_{+}(n)&=\{i\in {S}(n):\arg(z_i)\geq 0\},\\ {S}_{-}(n)&=\{i\in {S}(n):\arg(z_i)<0\}.\end{aligned}
\end{equation*}

\begin{lemma}\label{lem67} Let $\{z_1, \ \ldots, \ z_n\}$ be a sequence defined by \eqref{svk11}. Suppose that $0\leq\arg(w)\leq \frac{\pi}{M}$ and
\begin{equation}\label{svk14}
- \pi+\frac{\pi}{M} \leq \arg(z_i) \leq \frac{\pi}{M},\quad i=M,\ldots,n.
\end{equation}
Then
\begin{equation*}\label{svk15} \max_{S={S}_{+}(n),\,{S}_{-}(n)}\Big|\sum_{i\in S}\arg (z_i)\Big| \leq \frac{\pi(M-1)}{M} < \pi.
\end{equation*}
\end{lemma}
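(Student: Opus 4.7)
The plan has two parts. The bound on $|\sum_{S_+(n)}\arg(z_i)|$ is immediate: each term in $S_+(n)$ has argument in $[0,\pi/M]$, and $|S_+(n)| \le |S(n)| = M-1$, so the sum is at most $(M-1)\pi/M$. The substantive content is the bound on $|\sum_{S_-(n)}\arg(z_i)|$, which I would prove by induction on $n$.

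For the base case $n=M$, the set $S(M)=\{2,\ldots,M\}$ contains only $z_M$ with potentially negative argument, because $z_1,\ldots,z_{M-1}\in K_{0,\pi/M}$ all have non-negative argument; the bound then reduces directly to the hypothesis $\arg(z_M) \ge -\pi(M-1)/M$. For the inductive step, the key is the defining recursion $z_n = a_n w + 1/(z_{n-1}\cdots z_{n-M+1})$. Setting $\sigma_{n-1} := \sum_{i\in S(n-1)}\arg(z_i)$, the inductive hypothesis together with the trivial positive-side bound confines $\sigma_{n-1}$ to $[-\pi(M-1)/M,\pi(M-1)/M]\subset(-\pi,\pi)$, so the principal arguments do not wrap around modulo $2\pi$ and the principal argument of $\prod_{i\in S(n-1)}z_i$ equals $\sigma_{n-1}$ exactly. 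Then $z_n$ is a sum of two complex numbers with arguments $\arg(w)\in[0,\pi/M]$ and $-\sigma_{n-1}\in[-\pi(M-1)/M,\pi(M-1)/M]$ differing by at most $\pi/M + \pi(M-1)/M = \pi$; hence they lie in a closed half-plane, and $\arg(z_n)$ lies in the closed arc joining $\arg(w)$ and $-\sigma_{n-1}$.

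The induction then splits into two cases. If $\arg(z_n)\ge 0$, then $n\notin S_-(n)$, so $S_-(n)\subseteq S_-(n-1)$ and $|\sum_{S_-(n)}\arg(z_i)|$ does not exceed the inductive bound. If $\arg(z_n)<0$, the arc bracketing forces $|\arg(z_n)|\le \sigma_{n-1} = \sum_{S_+(n-1)}\arg(z_i) - |\sum_{S_-(n-1)}\arg(z_i)|$. Combined with the update identity
\[
\sum_{S_-(n)}\arg(z_i) \;=\; \sum_{S_-(n-1)}\arg(z_i) \;-\; \arg(z_{n-M+1})\mathbf{1}_{\{\arg(z_{n-M+1})<0\}} \;+\; \arg(z_n),
\]
the $|\sum_{S_-(n-1)}\arg|$ contributions cancel via the triangle inequality, collapsing the bound to $|\sum_{S_-(n)}\arg(z_i)|\le \sum_{S_+(n-1)}\arg(z_i)\le \pi(M-1)/M$.

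The main obstacle I anticipate is justifying the arc bracketing $\arg(z_n)\in[-\sigma_{n-1},\arg(w)]$ rigorously, since the angular gap between the two summands reaches exactly $\pi$ in the extreme case $\arg(w)=\pi/M$, $\sigma_{n-1}=\pi(M-1)/M$, placing us at the boundary of the antiparallel regime. In that borderline case, the sum (nonzero by the very hypothesis that $\arg(z_n)$ is defined) still has argument equal to one of the two endpoints of the arc, so the bracketing and the subsequent telescoping inequality both go through.
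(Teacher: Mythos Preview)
Your proposal is correct and follows essentially the same approach as the paper's own proof: both argue by induction on $n$, dispatch the $S_+$ bound trivially via $|S_+(n)|\le M-1$ and $\arg(z_i)\le\pi/M$, and for the $S_-$ bound split on whether $\arg(z_n)\geq 0$ (reducing to $S_-(n)\subseteq S_-(n-1)$) or $\arg(z_n)<0$ (using the recursion to get $\arg(z_n)>-\sigma_{n-1}$, then telescoping against $\sum_{S_-(n-1)}\arg(z_i)$ to leave only the $S_+(n-1)$ sum). Your framing via the ``arc bracketing'' of $\arg(z_n)$ between $\arg(w)$ and $-\sigma_{n-1}$ is exactly what the paper uses when it writes ``Since $\arg(a_{n+1}w)\geq 0$, we see that $0>\arg(z_{n+1})>\arg\!\bigl(1/(z_n\cdots z_{n-(M-2)})\bigr)$,'' and your two subcases according to the sign of $\arg(z_{n-M+1})$ match the paper's split on whether $n-(M-2)\in S_+(n)$ or $S_-(n)$.
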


\begin{proof} We use induction on $n$. Let $n = M$. Then ${S}(M) = \{2, \ldots, M\}$. Since $\arg(z_i)\geq 0$ for $i=2,\ldots,M-1$, we have either ${S}_{+}(M) = {S}(M)$ or ${S}_{+}(M) = {S}(M) \setminus\{M\}$, hence ${S}_{-}(M) = \emptyset$ or ${S}_{-}(M) =\{M\}$. Suppose first that ${S}_{-}(M) = \emptyset$. Since we assume that $-\pi<\arg(z_M)\leq \pi/M$, we obtain
\begin{equation*}
0\leq \arg(z_M)\leq \frac{\pi}{M},
\end{equation*}
which implies that
\begin{equation*}
\sum_{i\in {S}_{+}(M)}\arg (z_i) = \sum_{i=2}^{M}\arg (z_i) \leq  \dfrac{\pi(M-1)}{M} < \pi.
\end{equation*}
Let us consider the case  ${S}_{-}(M) = \{M\}$. Then ${S}_{+}(M)=\{2,\ldots,M-1\}$,
implying that
\begin{equation*}\label{svk20}
0\leq\sum_{i\in {S}_{+}(M)}\arg (z_i) = \sum_{i=2}^{M-1}\arg (z_i) \leq \frac{(M-2)\pi}{M}<\frac{(M-1)\pi}{M}.
\end{equation*}
For ${S}_{-}(M) = \{M\}$ we set
\begin{equation*}\label{svk22}
u=\frac{1}{z_{M-1}\cdot \ldots\cdot z_1}.
\end{equation*}
Then
\[
-\frac{(M-1)\pi}{M}\leq -\sum_{i=1}^{M-1}\arg(z_1)=\arg(u)\leq 0.
\]
Since ${S}_{-}(M) = \{M\}$, we obtain
\begin{equation*}\label{svk21}
\arg(a_Mw + u)=\arg(z_M)\leq 0.
\end{equation*}
Both $w$ and $u$ are in the half-plane
\[
\left\{z:-\frac{(M-1)\pi}{M}\leq\arg(z)\leq\frac{\pi}{M}\right\}.
\]
Therefore the sum $a_Mw+u$ is in the same half-plane too. It follows that
\[
-\frac{(M-1)\pi}{M}\leq\arg(a_Mw+u)=\arg(z_M)\leq 0.
\]
This proves the Lemma for $n=M$.

Suppose that the Lemma is proved for $n$. Let us prove it for $n+1$.  We have
\begin{equation*}
{S}(n) = \{n - (M-2), \ \ldots, \ n\},\quad {S}(n+1) = \{n - (M-3), \ \ldots, \ n+1\}.
\end{equation*}
If
\begin{equation*}
\max_{S={S}_{+}(n+1),\,{S}_{-}(n+1)}\Big|\sum_{i\in S}\arg (z_i)\Big| = \sum_{i\in {S}_{+}(n+1)}\arg (z_i),
\end{equation*}
then this maximum cannot exceed $\frac{\pi(M-1)}{M}$, since the sum contains not more then $M-1$ summands each of which is nonnegative and does not exceed $\pi/M$ by our assumption that $\arg(z_i)\leq \pi/M$.

Now let
 \begin{equation*}\max_{S={S}_{+}(n+1),\,{S}_{-}(n+1)}\Big|\sum_{i\in S}\arg (z_i)\Big| = \sum_{i\in {S}_{-}(n+1)}-\arg (z_i).
 \end{equation*}
Suppose first that $n+1 \notin {S}_{-}(n+1)$, i.e., $\arg(z_{n+1}) \geq 0$. Then ${S}_{-}(n+1)\subset {S}_{-}(n)$, implying that
\begin{equation*}\Big|\sum_{i\in{S}_{-}(n+1)}\arg (z_i)\Big| \leq  \Big|\sum_{i\in {S}_{-}(n)}\arg (z_i)\Big| \leq \dfrac{\pi(M-1)}{M}
\end{equation*}
by the induction hypothesis.

Now consider the case when $n+1 \in {S}_{-}(n+1)$, i.e., $\arg(z_{n+1}) < 0$.
We have
\begin{equation*}
\arg(z_{n+1})= \arg\left(a_{n+1}w + \frac{1}{z_{n}\cdot \ldots\cdot z_{n - (M-2)}}\right) < 0.
\end{equation*}
Since $\arg(a_{n+1}w)\geq 0$, we see that
\[0>\arg(z_{n+1})>\arg\left(\frac{1}{z_{n}\cdot \ldots\cdot z_{n - (M-2)}}\right) = - \sum_{i= n - (M-2)}^{n}\arg (z_i)=\sum_{i\in S_{-}(n)}\left\{-\arg(z_i)\right\}-\sum_{i\in S_{+}(n)}\arg(z_i).\\
\]
If $n-(M-2)\in S_{+}(n)$, then
\begin{equation*}
0>\sum_{i\in{S}_{-}(n+1)}\arg (z_i)=\arg(z_{n+1})+\sum_{i\in S_{-}(n)}\arg(z_i)>-\sum_{i\in S_{+}(n)}\arg(z_i)>-\frac{\pi(M-1)}{M}
\end{equation*}
by the induction hypothesis.
If $n-(M-2)\in S_{-}(n)$, then
\begin{eqnarray*}
\qquad \qquad 0 &>&\sum_{i\in{S}_{-}(n+1)}\arg (z_i)=\arg(z_{n+1})+\sum_{i\in S_{-}(n)}\arg(z_i)-\arg(z_{n-(M-2)}) \\ &>& -\arg(z_{n-(M-2)})-\sum_{i\in S_{+}(n)}\arg(z_i)>-\frac{\pi(M-1)}{M}.\qquad \qquad \qquad \qquad \qquad   \qquad  \quad \qquad \qed
\end{eqnarray*}
\end{proof}

Now suppose the generalized Euclidean algorithm produces all $h_j$ positive. Then
 the sequence
\begin{equation*}
z_i = \frac{f_{n-i}(z)}{f_{n-i+1}(z)}, \quad i = 1, \, \ldots,\, n,
\end{equation*}
satisfies the conditions of Lemma \ref{lem67}, where $0\leq \arg(z)\leq \pi/M$, $w=z$, $a_i=h_{n-i}/h_{n-i+1}$. Indeed, the inequalities \eqref{svk14} hold by Corollary \ref{cor22}. Each arithmetic polynomial $f_{n-(M-1)+j}$, $j=0,\ldots, M-1$, has difference $M$ and degree $(M-1)-j$. Hence  these polynomials are monomials and therefore all points
\[
z_{(M-1)-j}=\frac{f_{n-(M-1)+j}(z)}{f_{n-(M-1)+j+1}(z)}=\frac{h_{n-(M-1)+j}}{h_{n-(M-1)+j+1}}z
\]
for $j=0,1,\ldots, M-2$ lie in the cone $K_{0,\pi/M}$. If $M\leq i\leq n$ then, by the generalized Euclidean algorithm and Corollary \ref{corGEA},
\begin{eqnarray*}
z_{i}&=&\frac{f_{n-i}}{f_{n-i+1}(z)}=\frac{h_{n-i}}{h_{n-i+1}}z+\frac{f_{n-i+M}(z)}{f_{n-i+1}(z)}=a_i z+\frac{1}{\frac{f_{n-i+1}(z)}{f_{n-i-M}(z)}} \\ &=&
a_i z+\frac{1}{\frac{f_{n-i+1}(z)}{f_{n-i+2}(z)}\cdot\frac{f_{n-i+2}(z)}{f_{n-i+3}(z)}\cdots\frac{f_{n-i+(M-1)}(z)}{f_{n-i+M}(z)}}=
a_i z+\frac{1}{z_{i-1}\cdot z_{i-2}\cdots z_{i-(M-1)}},
\end{eqnarray*}
which implies \eqref{svk11}. Now using Lemma \ref{lem67}, we obtain the following Corollary.
\begin{corollary}\label{cor34} If $z\in K_{0,\pi/M}$ and $n\geq M-1$, then
\begin{equation}\label{svk105} \max_{S={S}_{+}(n),\,{S}_{-}(n)}\Big|\sum_{i\in S}\arg \left(\frac{f_{n-i}(z)}{f_{n-i+1}(z)}\right)\Big| \leq \frac{\pi(M-1)}{M} < \pi.
\end{equation}
\end{corollary}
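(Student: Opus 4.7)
The plan is to show that the corollary follows directly from Lemma~\ref{lem67} once we identify a suitable sequence. Specifically, I would set $z_i \eqbd f_{n-i}(z)/f_{n-i+1}(z)$ for $i=1,\ldots,n$, take $w = z$, and $a_i = h_{n-i}/h_{n-i+1}$. Verifying the three hypotheses of Lemma~\ref{lem67} — the recurrence \eqref{svk11}, the angular bound \eqref{svk14}, and membership of the starting terms $z_1,\ldots,z_{M-1}$ in $K_{0,\pi/M}$ — then yields the conclusion \eqref{svk105} immediately upon substitution.

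For the starting values, I would invoke Corollary~\ref{corGEA} in the non-degenerate case, where $\deg(f_k)=n-k$ for every $k$. In particular, the polynomials $f_{n-(M-2)},\ldots,f_n$ have degrees $M-2,M-3,\ldots,0$, each strictly less than $M$; since every nonzero $f_k$ is arithmetic with difference $M$, each such polynomial must be a single monomial (possibly a nonzero constant). Consequently each $z_i$ for $i=1,\ldots,M-1$ reduces to a positive scalar multiple of $z$ (or of $1$), which trivially lies in $K_{0,\pi/M}$.

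For the recurrence, I would appeal to the generalized Euclidean algorithm: $f_{n-i}(z) = d_{n-i}(z) f_{n-i+1}(z) + f_{n-i+M}(z)$, and by Corollary~\ref{corGEA} again $d_{n-i}(z) = c_{n-i}\,z$ for some positive constant $c_{n-i}$ in the non-degenerate case. Dividing by $f_{n-i+1}(z)$ and telescoping the tail,
\[
\frac{f_{n-i+M}(z)}{f_{n-i+1}(z)} \;=\; \prod_{k=1}^{M-1} \frac{f_{n-i+k+1}(z)}{f_{n-i+k}(z)} \;=\; \frac{1}{z_{i-1}\,z_{i-2}\cdots z_{i-(M-1)}},
\]
which reproduces \eqref{svk11} with $a_i = h_{n-i}/h_{n-i+1}>0$. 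For the angular bound, each $z_i$ is the value at $z$ of one of the rational functions $R_{n-i,\,n-i+1}$ studied in Section~\ref{sec:continuedfrac}; by Corollary~\ref{cor22} this function sends $K_{0,\pi/M}$ into $K_{-(M-1)\pi/M,\,\pi/M}$, which is exactly the range required in \eqref{svk14}.

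The main obstacle, minor as it is, lies in making sure Corollary~\ref{cor22} genuinely applies to each pair $(f_{n-i}, f_{n-i+1})$ — i.e., that the coefficients of the continued-fraction expansion of $R_{n-i,n-i+1}$ are positive. This is precisely what was established in the last corollary of Section~\ref{sec:submatrices} under the standing non-degeneracy assumption that all $h_j$ are positive. Once those prerequisites are collected, all three hypotheses of Lemma~\ref{lem67} are in place, and its conclusion is \eqref{svk105} verbatim.
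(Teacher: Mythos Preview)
Your proposal is correct and follows essentially the same approach as the paper: the identification $z_i = f_{n-i}(z)/f_{n-i+1}(z)$, $w=z$, $a_i=h_{n-i}/h_{n-i+1}$, the verification that the last $M-1$ polynomials are monomials so the starting values lie in $K_{0,\pi/M}$, the derivation of recurrence~\eqref{svk11} by telescoping the generalized Euclidean step, and the appeal to Corollary~\ref{cor22} for~\eqref{svk14} all mirror the paper's argument exactly. One minor caution: your notation $R_{n-i,\,n-i+1}$ goes beyond the paper's definition of $R_{ij}$ (which is only for $0\le i<j\le M-1$), and the starting ratios are always positive multiples of $z$ rather than of $1$; but neither point affects the substance, and the paper itself invokes Corollary~\ref{cor22} with the same brevity.
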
 \medskip

\noindent
The mapping $i\longmapsto n-i$ maps the set of integers $S(n)=\{n-(M-2),\ldots,n\}$ bijectively onto the set $S(M-2)=\{0,\ldots,M-2\}$. This transforms \eqref{svk105} into its 'dual' form
\begin{equation}\label{svk205} \max_{S={S}_{+}(M-2),\,{S}_{-}(M-2)}\Big|\sum_{i\in S}\arg \left(\frac{f_{i}(z)}{f_{i+1}(z)}\right)\Big| \leq \frac{\pi(M-1)}{M} < \pi.
\end{equation}
\medskip

Now we can state and prove our main result.
\begin{theorem} \label{maintheorem} Let $f(x) = a_0x^n + a_1x^{n-1} + \cdots + a_n$ $(a_0, \ a_1, \ \ldots, \ a_n \in {\mathbb R})$ be a polynomial of degree $n$ and $M$ be a positive integer $(2 \leq M \leq n)$. Let all the leading coefficients $h_0, \ \ldots, \ h_n$ of the polynomials $f_0, \ \ldots, \ f_n$ obtained by applying the generalized Euclidean algorithm with step $M$, be positive. Then $f(z) = 0$ implies $|\arg(z)| > \frac{\pi}{M}$.
\end{theorem}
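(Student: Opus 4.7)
The plan is to argue by contradiction. Assume $f(z_0)=0$ for some $z_0$ with $|\arg z_0|\leq \pi/M$. Since $f$ has real coefficients, complex conjugation lets me take $z_0\in K_{0,\pi/M}$. The point $z_0=0$ is ruled out at once: by the (unlabeled) corollary to Theorem~\ref{EuHur}, the hypothesis that every $h_i>0$ forces every coefficient of $f$ to be positive, so $f(0)=a_n>0$. Hence $z_0\neq 0$.

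For the main step I use the decomposition $f(z_0)=\sum_{j=0}^{M-1}f_j(z_0)$ together with Corollary~\ref{cor34} in its dual form~\eqref{svk205}. Writing $\delta_j:=\arg\bigl(f_j(z_0)/f_{j+1}(z_0)\bigr)$ for $j=0,\ldots,M-2$, one has the separate bounds $\sum_{\delta_j\geq 0}\delta_j\leq (M-1)\pi/M$ and $\sum_{\delta_j<0}|\delta_j|\leq (M-1)\pi/M$. Telescoping, the unwrapped partial sums $\theta_j:=\sum_{i=j}^{M-2}\delta_i = \arg f_j(z_0)-\arg f_{M-1}(z_0)$ satisfy $|\theta_j-\theta_k|\leq (M-1)\pi/M<\pi$ for every $j,k\in\{0,\ldots,M-1\}$, because any partial sum of the $\delta_i$'s over a subinterval of indices is bounded above by the total positive mass and below by the total negative mass. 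Thus the unwrapped arguments $\arg f_j(z_0)$ all lie in an interval of length strictly less than $\pi$; the $M$ complex numbers $f_j(z_0)$ therefore sit in a common open half-plane of $\mathbb{C}$, and their sum $f(z_0)$ cannot vanish---a contradiction.

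The principal obstacle, which must be handled separately, is the degenerate case in which some $f_j(z_0)=0$ so that the chain of ratios underpinning Corollary~\ref{cor34} is not defined. The extreme subcase where \emph{every} $f_0(z_0),\ldots,f_{M-1}(z_0)$ vanishes is ruled out by propagating the vanishing through the recursions $f_{j+M}=f_j-d_j f_{j+1}$ of the generalized Euclidean algorithm: induction forces $f_k(z_0)=0$ for all $k\leq n$, contradicting $f_n\equiv h_n>0$ (the non-degenerate setting delivered by Corollary~\ref{corGEA}). The subcase where exactly one $f_j(z_0)$ is nonzero is immediate. For the mixed subcases I plan to combine pairwise cone bounds from Corollary~\ref{cor22} (which, crucially, strictly exclude the argument $\pm\pi$ from every image cone of $R_{ij}$ at $\alpha=\pi/M$) with a Hurwitz-style continuity argument: the bad set $\{z\in K_{0,\pi/M}:\text{some }f_j(z)=0\}$ is finite, so $f$ is zero-free on the complement of this set by the half-plane argument above, and the remaining finitely many candidate points can be dispatched by analyticity and the pairwise restrictions on $\arg R_{ij}$.
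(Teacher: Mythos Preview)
Your main argument is correct and takes a genuinely different route from the paper's.  Both proofs start from Corollary~\ref{cor34} in its dual form~\eqref{svk205}, but the paper then rewrites $\sum_k f_k/f_{M-1}=-1$ in Horner form and proves an inductive Lemma (the paper's Lemma~\ref{CDC}) tracking $\arg u_k$ through the recursion $u_{k+1}=y_{k+1}(u_k+1)$; you bypass this entirely.  Your observation that any contiguous partial sum $\theta_j-\theta_k=\sum_{i=j}^{k-1}\delta_i$ is bounded above by the total positive mass and below by the total negative mass immediately gives $\max_j\theta_j-\min_j\theta_j\le (M-1)\pi/M<\pi$.  Since $|\theta_j|<\pi$ this really is the principal argument of $f_j/f_{M-1}$, so all $f_j(z_0)$ lie in a sector of aperture strictly less than~$\pi$, and a sum of nonzero vectors in such a sector cannot vanish.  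This is shorter and more transparent than the paper's inductive lemma; the price is only that one must check the telescoping identity $\theta_j=\arg(f_j/f_{M-1})$ holds as an honest equality (not just mod~$2\pi$), which it does once $|\theta_j|<\pi$.

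The genuinely incomplete part is your handling of the degenerate case.  Note first that the bad set must include zeros of \emph{all} $f_0,\dots,f_n$, not just $f_0,\dots,f_{M-1}$, since Corollary~\ref{cor34} feeds the entire chain $z_i=f_{n-i}/f_{n-i+1}$ into Lemma~\ref{lem67}.  More importantly, ``$f$ is nonzero on a cofinite subset of $K_{0,\pi/M}$, hence nonzero everywhere by analyticity'' is not a valid argument: a polynomial is perfectly happy to vanish at an isolated point.  The fix, however, is already implicit in your approach.  Your argument actually yields the \emph{quantitative} bound
\[
\Re\!\bigl(e^{-i\alpha(z)}f(z)\bigr)\;\ge\;\cos\!\Bigl(\tfrac{(M-1)\pi}{2M}\Bigr)\sum_{j=0}^{M-1}|f_j(z)|
\]
for $z$ outside the (finite) bad set, where $\alpha(z)$ is the midpoint of the arc containing the $\arg f_j(z)$.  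Passing to a subsequence so that $\alpha(z_n)\to\alpha$, the right-hand side has a positive limit at any $z_0$ where not all $f_j(z_0)$ vanish (and you already ruled out the all-zero case via the recursion down to $f_n\equiv h_n>0$).  This closes the gap cleanly; I would replace the vague ``analyticity and pairwise restrictions'' paragraph with this explicit limiting argument.  (For what it is worth, the paper's own proof also divides by $f_{M-1}(z)$ and uses the full chain of ratios without commenting on possible vanishing, so this wrinkle is shared.)
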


 \begin{proof} Since all coefficients of $f$ are real, its complex roots occur in complex conjugate pairs. Hence it is sufficient to prove that $f(z)$ has no roots in the sector $0 \leq \arg(z) \leq \frac{\pi}{M}$.
The equation $f(z) = 0$ is equivalent to
\begin{equation*}\label{seq2}
f_0(z) + f_1(z) + \cdots + f_{M-1}(z) = 0,
\end{equation*}
or to the equation
\begin{equation}\label{svk010}\frac{f_0}{f_{M-1}}(z) + \frac{f_1}{f_{M-1}}(z) + \cdots + \frac{f_{M-2}}{f_{M-1}}(z) = -1.
\end{equation}
We represent each term in the above sum as a product
\[
\frac{f_k}{f_{M-1}}=\frac{f_k}{f_{k+1}}\cdot\frac{f_{k+1}}{f_{k+2}}\cdots\frac{f_{M-2}}{f_{M-1}}
\]
and apply Horner's algorithm so that the equation \eqref{svk010} takes the form:
\begin{equation}\label{ku09}
\frac{f_{M-2}}{f_{M-1}}(z)\left(\frac{f_{M-3}}{f_{M-2}}(z) \cdots\left(\frac{f_1}{f_2}(z)\left(\frac{f_0}{f_1}(z)+1\right)+1\right) \cdots +1\right) = -1.
\end{equation}
The proof of the theorem will be completed if we can show that the argument of the left-hand side 
of~\eqref{ku09} is smaller than $\pi$ in absolute value. As a matter of fact, we will show that it does not exceed $\pi(M-1)/M$ in absolute value. To do this we rewrite the left-hand side of~\eqref{ku09} in the form of the following recurrence relation
\begin{equation*}
u_{k+1}=y_{k+1}(u_k+1),\quad y_k=\frac{f_{k-1}}{f_k}(z),\quad k=1,\ldots,M-1,\quad  u_1=y_1.
\end{equation*}
As before, for every $k$ we define the decomposition
\[
\{1,2,\ldots,k\}=T_{+}(k)\cup T_{-}(k),
\]
where $j\in T_{-}(k)$ if and only if $\arg(y_j)<0$ and $j\in T_{+}(k)$ if and only if $\arg(y_j)\geq0$.
\begin{lemma}\label{CDC} For every $k=1,\ldots, M-1$,
\begin{equation}\label{svk407}
|\arg(u_k)|\leq \max_{T=T_{+}(k),\,T_{-}(k)}\Big|\sum_{i\in T}\arg (y_i)\Big|.
\end{equation}
Moreover, if $\arg(u_k) \geq 0$, then
\begin{equation}\label{svk408}\arg(u_k) \leq \sum_{i\in T_{+}(k)}\arg (y_i) \leq \max_{T=T_{+}(k),\,T_{-}(k)}\Big|\sum_{i\in T}\arg (y_i)\Big|;
\end{equation}
if $\arg(u_k)<0$, then
\begin{equation}\label{svk409}
-\arg(u_k) \leq -\sum_{i\in T_{-}(k)}\arg (y_i) \leq \max_{T=T_{+}(k),\,T_{-}(k)}\Big|\sum_{i\in T}\arg (y_i)\Big|.
\end{equation}
\end{lemma}
\begin{proof} We prove the lemma by induction. Since $u_1=y_1(0+1)$ it is clear the statement holds for $k=1$. Suppose that the lemma is proved for $k$. Then by \eqref{svk205} $\{y_1,y_2,\ldots, y_k, y_{k+1}\}$ is a sequence in $K_{-\pi+\frac{\pi}{M},\frac{\pi}{M}}$ satisfying
\[
\max_{T=T_{+}(k+1),\,T_{-}(k+1)}\Big|\sum_{i\in T}\arg (y_i)\Big|\leq\frac{\pi(M-1)}{M}.
\]
We have
\[
\arg(u_{k+1})=\arg(y_{k+1})+\arg(u_k + 1).
\]
 Suppose first that $\arg(u_{k+1})\geq 0$. Then at least one of $\arg(y_{k+1})$ and $\arg(u_k + 1)$ is non-negative.

If $\arg(y_{k+1})\geq 0$ and $\arg(u_k + 1)\leq 0$, then
\[
\arg(u_{k+1})\leq \arg(y_{k+1})\leq\arg(y_{k+1})+ \sum_{i\in T_{+}(k)}\arg (y_i)=\sum_{i\in T_{+}(k+1)}\arg (y_i).
\]
If $\arg(y_{k+1})\geq 0$ and $\arg(u_k + 1)> 0$, then $\arg(u_k)>0$ and
\begin{eqnarray*}
\arg(u_{k+1})&=&\arg(y_{k+1})+\arg(u_k + 1)\leq\arg(y_{k+1})+\arg(u_k) \\ &\leq& \arg(y_{k+1})+\sum_{i\in T_{+}(k)}\arg (y_i)=\sum_{i\in T_{+}(k+1)}\arg (y_i).
\end{eqnarray*}
If $\arg(y_{k+1})<0$ and $\arg(u_k + 1)> 0$, then
\begin{eqnarray*}
\arg(u_{k+1}) &=&\arg(y_{k+1})+\arg(u_k + 1)\leq\arg(y_{k+1})+\arg(u_k) \\ &\leq&  \arg(u_k)\leq \sum_{i\in T_{+}(k)}\arg (z_i)=\sum_{i\in T_{+}(k+1)}\arg (y_i).
\end{eqnarray*}

\noindent
Suppose now that $\arg(u_{k+1})<0$. Then at least one of $\arg(y_{k+1})$ and $\arg(u_k + 1)$ is negative.
If $\arg(y_{k+1})\geq 0$ and $\arg(u_k + 1)< 0$, then
\[
0>\arg(u_{k+1})\geq \arg(u_{k}+1)\geq\arg(u_k)\geq \sum_{i\in T_{-}(k)}\arg (y_i)=\sum_{i\in T_{-}(k+1)}\arg (y_i).
\]
If $\arg(y_{k+1})< 0$ and $\arg(u_k + 1)\geq 0$, then
\[
0>\arg(u_{k+1})=\arg(y_{k+1})+\arg(u_k + 1)\geq \arg(y_{k+1})\geq\sum_{i\in T_{-}(k+1)}\arg (y_i).
\]
If $\arg(y_{k+1})<0$ and $\arg(u_k + 1)<0$, then $\arg(u_k+1)>\arg(u_k)$ and
\begin{eqnarray*}
\qquad \qquad \qquad 0 >\arg(u_{k+1}) &=&\arg(y_{k+1})+\arg(u_k + 1)\geq \arg(y_{k+1})+\arg(u_k) \\ &\geq & \arg(y_{k+1}) +\sum_{i\in T_{-}(k)}\arg (y_i)=\sum_{i\in T_{-}(k+1)}\arg (y_i).  \qquad \qquad \qquad \qquad \qed
\end{eqnarray*}

\end{proof} \noindent 
Returning to the proof of the main theorem,  we observe that $u_{M-1}$ is the left-hand side of \eqref{ku09}. By Lemma \ref{CDC} and by the inequality \eqref{svk205} we conclude that $u_{M-1}$
cannot equal $-1$.\qed
\end{proof}
\medskip

Let us illustrate this theorem for $n=5$ and $M=3$.

\begin{corollary}\label{SVKCor1} Let $a_0$, $a_1$, $a_2$ be positive as well as the leading coefficients $h_3$, $h_4$, $h_5$ of the polynomials constructed by the generalized Euclidean algorithm with step $M=3$  for the polynomial
\[
f(x)=a_0x^5+a_1x^4+a_2x^3+a_3x^2+a_4x+a_5.
\]
Suppose also that
\begin{equation}\label{6745}
\frac{a_3}{a_0}>\frac{a_4}{a_1}>\frac{a_5}{a_2}.
\end{equation}
Then the polynomial $f(z)$ does not vanish in the closed sector $$\big\{z:|\arg(z)|\leq\frac{\pi}{3}\big\}.$$
\end{corollary}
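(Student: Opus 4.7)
The plan is to recognize this corollary as simply the instantiation of Theorem~\ref{maintheorem} in the case $n=5$, $M=3$. To apply that theorem, I must verify that all six leading coefficients $h_0,h_1,\ldots,h_5$ of the polynomials $f_0,\ldots,f_5$ produced by the generalized Euclidean algorithm with step $M=3$ are strictly positive.

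First I would write the decomposition \eqref{svk100} explicitly:
$$f_0(x)=a_0x^5+a_3x^2,\qquad f_1(x)=a_1x^4+a_4x,\qquad f_2(x)=a_2x^3+a_5,$$
so that $h_0=a_0$, $h_1=a_1$, $h_2=a_2$ are positive by hypothesis. Next I would run the three remaining steps of the algorithm. Since everything will turn out non-degenerate, each quotient $d_i(x)$ is a monomial of degree~$1$ (Corollary~\ref{corGEA}), and a short direct computation gives
$$f_3(x)=\frac{a_1a_3-a_0a_4}{a_1}\,x^2,\qquad f_4(x)=\frac{a_2a_4-a_1a_5}{a_2}\,x,\qquad f_5(x)=a_5,$$
so that
$$h_3=\frac{a_1a_3-a_0a_4}{a_1},\qquad h_4=\frac{a_2a_4-a_1a_5}{a_2},\qquad h_5=a_5.$$

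Now comes the crucial (and essentially only) observation: given that $a_0,a_1,a_2>0$, the chain \eqref{6745} is algebraically equivalent to $h_3>0$ and $h_4>0$, while $h_5>0$ is just $a_5>0$. Thus the two hypothesis clusters of the corollary (positivity of $h_3,h_4,h_5$ and the inequality chain \eqref{6745}) are largely redundant, and together they produce exactly the hypothesis of Theorem~\ref{maintheorem}: all of $h_0,h_1,\ldots,h_5$ are positive.

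Applying Theorem~\ref{maintheorem} with $M=3$ then yields $|\arg(z)|>\pi/3$ for every root $z$ of $f$, which is strictly stronger than the claimed statement that $f$ has no zeros in the closed sector $|\arg(z)|\le\pi/3$. There is no real obstacle; the main step is just the bookkeeping in verifying that the algorithm does indeed run through without producing a zero remainder, and that the positivity conditions advertised in the corollary align with those required by the main theorem.
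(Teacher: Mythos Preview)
Your proposal is correct and follows essentially the same route as the paper: explicitly compute $f_0,\ldots,f_5$, read off $h_0,\ldots,h_5$, check that the hypotheses force them all to be positive, and invoke Theorem~\ref{maintheorem}. Your additional observation that the hypothesis cluster is redundant (since, given $a_0,a_1,a_2>0$, the chain~\eqref{6745} is \emph{equivalent} to $h_3,h_4>0$) is accurate and worth stating; the paper's proof leaves this implicit. One trivial quibble: the conclusion $|\arg z|>\pi/3$ for every root is not ``strictly stronger'' than having no zeros in the closed sector $|\arg z|\le\pi/3$---the two statements are equivalent.
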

\begin{proof} First we determine $f_0$, $f_1$, $f_2$:
\[
\begin{aligned}f_0(x)&=a_0x^5+a_3x^2;\\ f_1(x)&=a_1x^4+a_4x;\\f_2(x)&=a_2x^3+a_5.\end{aligned}
\]
Next,
\[
\begin{aligned}f_0(x)&=\frac{a_0}{a_1}x\cdot f_1(x)+\left(a_3-\frac{a_0a_4}{a_1}\right)x^2\\ f_1(x)&=\frac{a_1}{a_2}x\cdot f_2(x)+\left(a_4-\frac{a_1a_5}{a_2}\right)x\\f_2(x)&=\frac{a_1a_2}{a_1a_3-a_0a_4}xf_3+a_5\end{aligned}\quad\Longrightarrow\quad
\begin{aligned}f_3(x)&= \left(a_3-\frac{a_0a_4}{a_1}\right)x^2\\f_4(x)&=\left(a_4-\frac{a_1a_5}{a_2}\right)x\\f_5(x)&=a_5\end{aligned}
\]
Then, by \eqref{6745},
\[
h_0=a_0>0,\;h_1=a_1>0,\;h_2=a_2>0,\; h_5=a_5>0, h_3=a_0\left(\frac{a_3}{a_0}-\frac{a_4}{a_1}\right)>0,\;h_4=a_1\left(\frac{a_4}{a_1}-\frac{a_5}{a_2}\right)>0.
\]
\end{proof}

Notice that \eqref{6745} implies that $a_3$, $a_4$ are positive as soon as $a_0$, $a_1$, $a_2$, $a_5$ are positive.

 Using MATLAB or a mere scientific calculator, we can verify that the polynomial
\[
f(x)=x^5+x^4+x^3+1.001x^2+x+0.999,
\]
which satisfies the conditions of Corollary \ref{SVKCor1}, has the following roots:
\[
x_1=-1,\; x_{2,3}=-0.49975\pm 0.865592 i,\; x_{4,5}=0.49975\pm 0.86617 i.
\]

The roots $x_1$, $x_2$, and $x_3$ are located in the left half-plane. The slopes of the vectors $x_4$ and $x_5$ equal
\[
\pm \frac{0.86617}{0.49975}=\pm 1.73321.
\]
The slopes of the boundaries of the sector with $M=3$ equal
\[
\pm\tan \left(\frac{\pi}{3}\right)=\pm\sqrt{3}=\pm 1.73205.
\]
This illustrates numerically the conclusion of Corollary \ref{SVKCor1} that the polynomial $f(z)$ does not vanish in the sector plotted below:

 \begin{center}
\begin{tikzpicture}[xscale=1,yscale=0.8]
\draw[->] (-2,0) -- (2,0);
\draw[->] (0,-3.5) -- (0,3.5);

\path[fill=gray!30] (0,0) -- (2,3.4641) -- (2,-3.4641) -- (0,0);
\draw[black, ultra thick, domain=0:2] plot (\x, {-1.73205*\x});
\draw[->,black,thick] (0,0) -- (2.1,0);
\draw[black, ultra thick, domain=0:2] plot (\x, {1.73205*\x});
\draw[fill=black] (-1,0) circle [radius=0.05];
\draw[fill=black] (-0.49975,0.86559) circle [radius=0.05];
\draw[fill=black] (-0.49975,-0.86559) circle [radius=0.05];
\draw[fill=black] (0.49975,0.99617) circle [radius=0.05];
\draw[fill=black] (0.49975,-0.99617) circle [radius=0.05];

\foreach \x in {-1}
\draw (\x cm,1pt) -- (\x cm,1pt) node[anchor=north] {$\x$};
\node[anchor=north] at (-0.49975,-0.86559) {$\mathbf{x_3}$};
\node[anchor=south] at (-0.49975,0.86559) {$\mathbf{x_2}$};
\node[anchor=south] at (0.49975,1.2) {$\mathbf{x_4}$};
\node[anchor=north] at (0.49975,-1.2) {$\mathbf{x_5}$};

\node[anchor=west] at (2.5,0) {$\mathbf{|\arg(z)|\leq\frac{\pi}{3}}$};
\node[anchor=south] at (0,3.5) {$y$};
\node[anchor=west] at (2,0) {$x$};
\end{tikzpicture}
\end{center}

\noindent
Notice that inequalities \eqref{6745} are of course equivalent to inequalities for the following minors of $H_3$:
\[
\begin{vmatrix}a_1&a_4\\a_0&a_3\end{vmatrix}>0, \qquad \begin{vmatrix}a_2&a_5\\a_1&a_4\end{vmatrix}>0.
\]

\smallskip
\noindent
Theorem~\ref{coolthm} can be proved in various ways. The idea of the proof below was suggested  by Mikhail Tyaglov. \smallskip

\proofof{Theorem~\ref{coolthm}} The proof will proceed by deriving a contradiction from the assumptions $f(z)=0$, $\arg z\in (0,{\pi\over M})$. Since $f$ is a real polynomial, this will also establish by conjugation that the assumptions $f(z)=0$, $\arg z\in (-{\pi\over M}, 0)$ also lead to a contradiction. Finally, $f$ cannot have positive zeros since all its coefficients are nonnegative, which will thus show that $f$ has no zeros in the whole sector $|\arg(z)|< {\pi \over M}$.
Disclaimer: this result of course does not prevent $f$ from having zero at the origin!

Before we embark on the proof, let us make another reduction. Without loss of generality we can assume that $f_0(z)$ is a polynomial in $z^M$ by multiplying $f(z)$ by a suitable power of $z$.  Of course, this may create a (multiple) zero at the origin, but as we already remarked, we are not concerned with those. Note that, once $f_0(z)\bdeq p_0(z^M)$ is a polynomial in $z^M$, so are $zf_1(z) \bdeq p_1(z^M)$, $z^2f_2(z)\bdeq p_2(z^M)$, $\ldots$, $z^{M-1}f_{M-1}(z)\bdeq p_{M-1}(z^M)$. Some of them may be identically zero though.

Now, let $\arg z \in (0,{\pi\over M})$ and suppose
$$0=f(z)=f_0(z)+\cdots+f_{M-1}(z)=p_0(z^M)+{p_1(z^M)\over z}+\cdots + 
{p_{M-1}(z^M)\over z^{M-1}}.$$
Some of the $f_j$, $j>0$ may well be identically zero, those can be simply ignored.

The Hurwitz matrix for each pair $(p_i,p_j)$, $i<j$, is a submatrix of the generalized Hurwitz matrix $H_M(f)$ and is therefore totally nonnegative. Hence each resulting nonzero polynomial $p_j$ must have only nonpositive real roots by, e.g., \cite[Theorem~3.44(1)]{HT} and $p_j/p_0$ is an $R$-function of positive type, i.e., a function mapping the open upper half-plane $\C_+$ to itself. A small but crucial technicality: \cite[Theorem~3.44(1)]{HT} talks about $R$-functions of positive type and lists the coefficients of the pair in the order opposite to ours here.  Since $z^M$ does not lie on the negative real half-line for $\arg z \in (0,\pi/M)$, neither $p_j(z^M)$ nor $f_j(z)$ can vanish.
Furthermore,  $\arg z\in (0,\pi/M)$, so $z^M$ lies in $\C_+$, hence
$$ \arg f_j(z)= \arg \big(  {p_j(z^M)\over z^j p_0(z^M) } \big) = -j\arg z +\phi   \quad  \text{\rm for some}\quad \phi\in (0, \pi). $$
Hence $\arg f_j(z) \in (-j\pi/M,(M-j)\pi/M)$ and $f_j(z)$ cannot take on real negative values either.

As $p_0$ is certainly nonzero due to its leading term, we can divide everything through by it and get
\begin{equation}\label{cute}
 {p_1(z^M) \over z p_0(z^M)}+ {p_2(z^M)\over z^2 p_0(z^M)} +  \cdots + 
 {p_{M-1}(z^M)\over z^{M-1} p_0(z^M)}  = -1.   
\end{equation}

Among the nonzero terms, some lie in the half-open upper half-plane $\C^*_+\eqbd \{ w: \arg w \in [0,\pi)\} $ and some in the open lower half-plane $\C_-\eqbd \{ w: \arg w \in (-\pi,0)\} $. No terms lie on the negative real axis, as we already observed.

Moreover, the sum of all the terms in $\C^*_+$ is a vector in $\C^*_+$ as well that lies between the positive real half-line and the term with the \emph{largest} argument in the upper half-plane, say, $z^{M-i} p_i(z^M)/p_0(z^M)$. Likewise, the sum of all the terms in $\C_-$ is a vector in $\C_-$ that lies between the positive real half-line and the terms with the smallest (negative) argument, say, $z^{M-j} p_j(z^M)/p_0(z^M)$; $j\neq i$. This implies that 
\begin{equation}\label{conditionpi}
\arg \left(  {p_i(z^M)\over z^i p_0(z^M)} \right) -
\arg \left(  { p_j(z^M)\over z^j p_0(z^M) }\right) >\pi.
\end{equation}
 Indeed, if not, then \emph{all} vectors in~\eqref{cute} lie in the half-plane to the right of the vector
$z^{M-i} \, {p_i(z^M)\over p_0(z^M)} $, and hence their sum cannot equal $-1$.

Again, ${p_i / p_0}$, $p_j/p_0$ and $p_j/p_i$ (if $i<j$) are $R$-functions of positive type by~\cite[Theorem~3.44(1)]{HT}, so
\begin{eqnarray*} \arg \big(  {p_i(z^M)\over z^i p_0(z^M) } \big) = -i\arg z +\phi  & \quad & \text{\rm for some}\quad \phi\in (0, \pi). \\
\arg \big(  {p_j(z^M)\over z^j p_0(z^M)} \big) = -j\arg z +\psi & \quad & \text{\rm for some}\quad \psi\in (0,\pi),
\end{eqnarray*}
and $\arg {p_i(z^M)\over p_j(z^M)}=\phi - \psi$, $\arg {p_j(z^M)\over p_i(z^M)}=\psi - \phi$
since $\phi-\psi$ lies within the correct interval $(-\pi,\pi)$.
\begin{eqnarray*}
& {\rm If} \; i<j, \; {\rm then}  \; \phi-\psi \in (-\pi,0), \; {\rm hence} \; 
(j-i)\arg z +\phi - \psi \in \left(-\pi, (j-i){\pi\over M} \right).  \qquad \qquad \quad  \qquad \qquad \\
& {\rm If}\; j<i, \; {\rm then} \; \phi-\psi\in (0, \pi), \;  {\rm hence} \;  (j-i)\arg z +\phi - \psi \in \left(-(i-j){\pi\over M}, \pi-(i-j){\pi\over M} \right). \qquad\qquad\;\;
\end{eqnarray*}

In both cases $(j-i)\arg z +\phi - \psi$ cannot exceed $\pi$. But the inequality~\eqref{conditionpi} 
 means $$ (j-i)\arg z +\phi - \psi >\pi.$$ 
Contradiction! And Theorem~\ref{coolthm} is proven. \eop


\section{Factorization of the generalized Hurwitz matrix}\label{sec:factorization}

The ordinary Hurwitz matrix turns out to admit a particularly simple factorization~\cite{HOL}, with factors determined by the leading coefficients arising in the (ordinary) Euclidean algorithm applied to the even and odd parts of a polynomial. Our natural question now is whether an analogous factorization holds for generalized Hurwitz matrices. This turns out to be true!

Before we embark on a proof, we will switch from the matrix $H_M$ to its counterpart 
$$\widetilde{H}_M\eqbd \widetilde{H}_M(f) \eqbd 
\begin{pmatrix} a_0 & a_M & a_{2M} & \cdots \\
0 & a_{M-1} & a_{2M-1} & \cdots \\
0 & a_{M-2} & a_{2M-2} & \cdots \\
\vdots & \vdots & \vdots & \ddots \\
0 & a_0 & a_M & \cdots \\
0 & 0 & a_{M-1} & \cdots \\
\vdots & \vdots & \vdots & \ddots \\
\end{pmatrix}
$$
This is in keeping with the notation of~\cite{HOL}; it also makes for more elegant formulas. The reader intent on factoring the original matrix $H_M$ may do so by following the ideas below.  

\begin{theorem} \label{thm:factor} Given a polynomial $f(x) = a_0x^n + a_1x^{n-1} + \cdots + a_n$ and a positive integer $M \geq 2$, let all the leading coefficients $h_0, \ \ldots, \ h_n$ of the polynomials $f_0, \ \ldots, \ f_n$ obtained by applying the generalized Euclidean algorithm with step $M$ be nonzero. Then the  matrix $\widetilde{H}_M(f)$ factors as follows:
\begin{equation}\label{ku10}
\widetilde{H}_M(f) = J(c_1) \cdots J(c_n)\widetilde{H}_M(a_n),
\end{equation}
where $$J(c_i) = \{{\zeta}_{ij}\}_{i,j =1}^{\infty},$$
$$\zeta_{ij} = \left\{\begin{array}{lll}
c_i =\dfrac{h_{i-1}}{h_i} & \mbox{if} \qquad i = j = Mk+1, & k \geq 0  ;\\[1mm]
1& \mbox{if} \qquad j = i+1, & k \geq 0 ;\\[1mm]
0 & \mbox{otherwise.}\end{array}\right. $$
\end{theorem}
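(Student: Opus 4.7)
The plan is to deduce the factorization from a single ``rank-one reduction'' lemma, applied iteratively. For a polynomial $g$ of degree $d\geq 1$ whose residue-one arithmetic component $g_1$ is nonzero (with leading coefficients of $g$ and $g_1$ denoted by $\ell_g$ and $\ell_{g_1}$), set $c\eqbd \ell_g/\ell_{g_1}$ and $\hat g\eqbd g-c\,x\,g_1$, which is a polynomial of degree $d-1$. The key lemma I would establish is
\[ \widetilde{H}_M(g)=J(c)\,\widetilde{H}_M(\hat g). \]

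Assuming this, the theorem follows by iterating. Starting from $g^{(0)}\eqbd f$ and setting $g^{(i+1)}\eqbd\widehat{g^{(i)}}$, an easy induction using Corollary~\ref{corGEA} shows that, in the non-degenerate case guaranteed by the hypothesis, $g^{(i)}=f_i+f_{i+1}+\cdots+f_{i+M-1}$ (with $f_j\eqbd 0$ for $j>n$). In particular the leading coefficient of $g^{(i)}$ is $h_i$ and that of $(g^{(i)})_1$ is $h_{i+1}$, so the ratio used at stage $i+1$ is exactly $h_i/h_{i+1}=c_{i+1}$. After $n$ stages one reaches the constant $g^{(n)}=a_n$, and composing the key lemma $n$ times yields
\[ \widetilde{H}_M(f)=J(c_1)J(c_2)\cdots J(c_n)\widetilde{H}_M(a_n). \]

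The remaining work is the key lemma itself, which I would verify by a direct column-by-column computation. Reading off the structure of $J(c)$, left-multiplication shifts rows $Mk+r$ ($2\le r\le M$, $k\ge 0$) up by one, while each row $Mk+1$ of the product is $c$ times row $Mk+1$ of $\widetilde{H}_M(\hat g)$ plus row $Mk+2$. Calling the coefficients of $g$ and $\hat g$ by $a_m$ and $\hat a_m$, the identity $\hat g=g-c x g_1$ is equivalent, term by term, to the two coefficient relations $\hat a_m=a_{m+1}$ for $m\not\equiv -1\pmod M$ and $\hat a_{\ell M-1}=a_{\ell M}-c\,a_{\ell M+1}$ for $\ell\ge 1$. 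Feeding these into the row description of $J(c)\widetilde H_M(\hat g)$ produces exactly the entries of $\widetilde H_M(g)$.

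The main obstacle is the careful bookkeeping at the block boundaries of $\widetilde{H}_M$. Each block of $M$ rows is shifted one column to the right relative to the previous block, while $J(c)$ places its non-trivial diagonal entries only at the top row of each such block. Checking that the $J(c)$-action realigns the shifted coefficients of $\hat g$ with the coefficients of $g$ at every position---especially when the ``shift up by one row'' of $J(c)$ crosses a block boundary (from row $M(k+1)$ to row $M(k+1)+1$)---requires disciplined index chases, but it presents no conceptual difficulty once the two coefficient relations are in hand. The ``special'' coefficient relation at $m=\ell M-1$ is precisely what encodes the first step $f_M=f_0-c\,x f_1$ of the generalized Euclidean algorithm in matrix form.
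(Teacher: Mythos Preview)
Your proposal is correct and follows essentially the same route as the paper's own proof: your intermediate polynomials $g^{(i)}=f_i+f_{i+1}+\cdots+f_{i+M-1}$ are precisely the paper's $F_i$, and your key lemma $\widetilde{H}_M(g)=J(c)\,\widetilde{H}_M(\hat g)$ is exactly the reduction $\widetilde{H}_M(F_0)=J(c_1)\,\widetilde{H}_M(F_1)$ that the paper verifies directly and then feeds into an induction on degree. If anything, your coefficient relations $\hat a_m=a_{m+1}$ for $m\not\equiv -1\pmod M$ and $\hat a_{\ell M-1}=a_{\ell M}-c\,a_{\ell M+1}$ make the ``verify directly'' step more explicit than the paper does.
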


\begin{proof} We will prove formula \eqref{ku10} by induction on $n \eqbd \deg(f)$. For $\deg(f) = 1$, we have $f = a_0(x)+a_1$, $h_0 = a_0$, $h_1 = a_1$, and we obtain
 $$\begin{pmatrix}a_0 & 0 & 0 & \cdots \\
0 & 0 & 0 & \cdots \\
\vdots & \vdots & \vdots & \ddots \\
0 & a_1 & 0 & \cdots \\
0 & a_0 & 0& \cdots \\
\vdots & \vdots & \vdots & \ddots \\
 \end{pmatrix} = \begin{pmatrix} \frac{a_0}{a_1} & 1 & 0 &  \cdots & 0 & 0 & \cdots \\
0 & 0 & 1 & \cdots & 0 & 0 & \cdots \\
\vdots & \vdots & \vdots & \vdots & \vdots & \vdots & \ddots \\
0 & 0 & 0 & \cdots & 1 & 0 & \cdots \\
0 & 0 & 0 & \cdots & \frac{a_0}{a_1} & 1 & \cdots \\
\vdots & \vdots & \vdots & \vdots & \vdots & \vdots & \ddots \\ 
\end{pmatrix}\begin{pmatrix}a_1 & 0 & 0 & \cdots \\
0 & 0 & 0 & \cdots \\
\vdots & \vdots & \vdots & \ddots \\
0 & 0 & 0 & \cdots \\
0 & a_1 & 0 & \cdots \\
\vdots & \vdots & \vdots & \ddots \\
 \end{pmatrix},$$
 i.e., $$ \widetilde{H}_M(f) = J(c_1)\widetilde{H}_M(a_1),$$
where $c_1 = \dfrac{a_0}{a_1} = \dfrac{h_1}{h_0}$. Hence formula \eqref{ku10} holds for $n=1$.

Let us assume that the induction hypothesis holds for $n-1$.  Represent the polynomial $f(x)$ in the usual form    $f(x) = f_0(x) + f_1(x) + \cdots + f_{M-1}(x),$ where each $f_i$ picks up the terms 
with coefficients that are $i$ mod $M$. Run the 
generalized Euclidean algorithm with step $M$
to generate a sequence $f_M$, $\ldots$, $f_n$. The last polynomial $f_n$ has degree zero.
Now define polynomials $F_0, \ \ldots, \ F_n$ as follows:
\begin{align*} F_0(x)  & = \ f_0(x) + f_1(x) + \cdots + f_{M-1}(x) (= f(x)); \\
F_1(x) & = \ f_1(x) + f_2(x) + \cdots + f_M(x);  \\
\cdots \cdots  &  \cdots \cdots   \cdots \cdots \cdots \cdots \cdots \cdots \\
F_{n-(M-1)}(x) & = \ f_{n-(M-1)}(x) + f_{n-(M-2)}(x) + \cdots + f_{n}(x);  \\
F_{n-(M-2)}(x) & =  \ f_{n-(M-2)}(x) + f_{n-(M-3)}(x) + \cdots + f_{n}(x); \\
\cdots \cdots  &   \cdots \cdots \cdots \cdots \cdots \cdots  \cdots \cdots \\
F_n(x) & = \ f_n(x). \end{align*}
Since all the leading coefficients $h_0, \ \ldots, \ h_n$ of the polynomials $f_0, \ \ldots, \ f_n$ are nonzero, we have $\deg F_{i} = n-i$, $i = 0, \ \ldots, \ n$.
Since  $f_M(x) = f_0(x) - \dfrac{h_0}{h_1}xf_1(x) $,  note that 
 $$F_1(x) = f_1(x) + f_2(x) + \cdots + (f_0(x) - \frac{h_0}{h_1}xf_1(x)).$$
We now verify directly that
$\widetilde{H}_M(F_0) = J(c_1)\widetilde{H}_M(F_1),$ where $c_1 = \dfrac{a_0}{a_1} = \dfrac{h_0}{h_1}$. Since $\deg(F_1) = n-1$, $F_1$ satisfies the induction hypothesis, so the matrix $\widetilde{H}_M(F_1)$ factors further as claimed. This proves formula~\eqref{ku10} and the theorem. \hfill \qed \end{proof} \medskip

\noindent
{\bf Remark.} Applying Corollary~\ref{cor31}, we obtain that, for $i = 1,  \ldots,  n$,
$$h_i =\dfrac{H_M(k+1,r)}{H_M(k+1,r-1)}, \quad {\rm where}\quad 
r = \left\lceil \dfrac{i}{M-1}\right\rceil , \qquad k = r(M-1) - i.$$
Thus, if $\left\lceil \dfrac{i}{M-1}\right\rceil = \left\lceil \dfrac{i-1}{M-1}\right\rceil$ then
$c_i = \dfrac{h_{i-1}}{h_i} = \dfrac{H_M(k+2,r)H_M(k+1,r-1)}{H_M(k+2,r-1)H_M(k+1,r)}$. The case $\left\lceil \dfrac{i}{M-1}\right\rceil \neq \left\lceil \dfrac{i-1}{M-1}\right\rceil$ is possible if and only if $i = l(M-1)+1$, where $l$ is some nonnegative integer. Then $\left\lceil \dfrac{i-1}{M-1}\right\rceil = \left\lceil \dfrac{i}{M-1}\right\rceil - 1$, and 
$$h_{i-1} =\dfrac{H_M(k - M + 3,r-1)}{H_M(k-M+3,r-2)},\quad
c_i = \dfrac{h_{i-1}}{h_i} = \dfrac{H_M(k-M+3,r-1)H_M(k+1,r-1)}{H_M(k-M+3,r-2)H_M(k+1,r)}.$$ 

Just in case of the ordinary Hurwitz matrix, Theorem~\ref{thm:factor} yields yet another proof of Theorem~\ref{EuHur}. We restate this theorem slightly to include the matrix $\widetilde{H}_M(f)$ as well.

\begin{theorem}
Given a polynomial $f(x) = a_0x^n + a_1x^{n-1} + \cdots + a_n$ and a positive integer $M \geq 2$, let all the leading coefficients $h_0, \ \ldots, \ h_n$ of the polynomials $f_0, \ \ldots, \ f_n$ obtained by applying the generalized Euclidean algorithm with step $M$ be positive. Then both matrices $H_M(f)$ and $\widetilde{H}_M(f)$ are totally nonnegative.
\end{theorem}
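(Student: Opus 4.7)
My plan is to prove the two claims in sequence, using tools already established in the paper.

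The total nonnegativity of $H_M(f)$ is essentially immediate from Theorem~\ref{EuHur}: under the hypothesis that all $h_0,\ldots,h_n$ are positive, Lemma~\ref{lem31} expresses every special minor $H_M(k,r)$ as a product of positive numbers and hence positive. Theorem~\ref{Theorem2.2}(a) then yields total nonnegativity of $H_M(f)$ directly, so no additional work is needed for this half.

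For $\widetilde{H}_M(f)$, I would invoke the factorization established in Theorem~\ref{thm:factor}:
\[
\widetilde{H}_M(f) = J(c_1) J(c_2) \cdots J(c_n)\, \widetilde{H}_M(a_n),
\]
where each $c_i = h_{i-1}/h_i > 0$ under our hypothesis. The rightmost factor $\widetilde{H}_M(a_n)$ has a single positive entry $a_n = h_n$ at position $(1,1)$ and is trivially totally nonnegative. It thus suffices to verify that every $J(c_i)$ is totally nonnegative, since total nonnegativity is preserved under matrix products via the Cauchy-Binet formula. Cauchy-Binet applies in our infinite setting because each row and column of each factor has only finitely many nonzero entries, so every fixed minor of the product expands as a finite sum.

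The key step is therefore checking that each $J(c_i)$ is totally nonnegative. Each $J(c_i)$ is upper bidiagonal with nonnegative entries: its only nonzero entries are $c_i > 0$ at diagonal positions $(Mk+1, Mk+1)$ for $k\geq 0$, together with $1$ on the superdiagonal. This is a classical fact: every upper bidiagonal matrix with nonnegative entries is totally nonnegative (see, e.g., Pinkus, \emph{Totally Nonnegative Matrices}, Section~2.2). Combined with the closure of the totally nonnegative class under products, this completes the proof for $\widetilde{H}_M(f)$.

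I do not anticipate a serious obstacle: the nontrivial factorization is already in hand, and the total nonnegativity of bidiagonal matrices with nonnegative entries is standard. The only bookkeeping to keep in mind is tracking which rows of $J(c_i)$ carry a diagonal $c_i$ entry versus only a superdiagonal $1$, but this is handled uniformly by the bidiagonal structure; if one prefers an explicit verification, each $J(c_i)$ further decomposes into a finite product of elementary bidiagonal matrices (each differing from the identity in one $2\times 2$ block with nonnegative entries), each of which is manifestly totally nonnegative.
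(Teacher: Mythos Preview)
Your argument for $\widetilde{H}_M(f)$ is exactly the paper's: invoke the factorization of Theorem~\ref{thm:factor}, observe that each factor is totally nonnegative, and conclude via Cauchy--Binet. One small slip: $\widetilde{H}_M(a_n)$ is not a matrix with a single nonzero entry; by the definition of $\widetilde{H}_M$ applied to the constant polynomial $a_n$, it carries the entry $a_n$ at every position $(Mk+1,\,k+1)$, $k\ge 0$ (compare the right-hand factor in the base case of the proof of Theorem~\ref{thm:factor}). This does not affect your conclusion, since such a matrix is still trivially totally nonnegative.

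For $H_M(f)$ the paper takes a slightly different route. Rather than appealing to Theorem~\ref{EuHur} (which was proved earlier via Lemma~\ref{lem31} and Theorem~\ref{Theorem2.2}), the paper observes that $H_M$ is a submatrix of $\widetilde{H}_M$ and therefore inherits total nonnegativity from it. This is the whole point of placing the restated theorem after Theorem~\ref{thm:factor}: the factorization is meant to yield \emph{another} proof of Theorem~\ref{EuHur}, not to rely on it. Your route is of course valid, but it misses the intended payoff that the factorization alone suffices for both matrices.
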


\begin{proof} If all coefficients $h_i$ are positive, then formula~\eqref{ku10} holds, and all factors in that formula are totally nonnegative by inspection. A product of totally nonnegative matrices is totally nonnegative by the Cauchy-Binet formula, so the product $\widetilde{H}_M$ is totally nonnegative. Since $H_M$ is a submatrix of $\widetilde{H}_M$ (and, beautifully, vice versa), we conclude that $H_M$ is totally nonnegative as well. \hfill \qed
\end{proof}


\section{Totally nonnegative submatrices of generalized Hurwitz matrices and zero localization}\label{sec:localization}

We now ask another 'natural' question: is the generalized Hurwitz matrix $H_M$ necessarily totally nonnegative whenever the corresponding (ordinary) Hurwitz matrices $H_M^{(ij)}$ are totally nonnegative  for all pairs $(i,j)$, $0 \leq i < j \leq M-1$?  The answer to this question happens to be negative.

We first recall that the total nonnegativity of the ordinary Hurwitz matrix $H_2(f)$ follows from the positivity of its leading principal minors up to order $n=\deg(f)$ by the classical Hurwitz theorem. 
We will see now that even this (slightly) stronger condition, namely, the positivity of the relevant principal minors of all matrices $H_M^{(ij)}$, $0 \leq i < j \leq M-1$, does not imply that $H_M(f)$ is totally nonnegative if $M\neq 2$. Here is an example.

 \begin{example}Let $n=6$, $M=3$ and $f(x) = x^6 + 3x^5 + 9x^4+ \frac{3}{2}x^3+2x^2 +x +\frac{1}{9}$. Then \begin{equation*}f_0(x) = x^6 + \frac{3}{2} x^3+ \frac{1}{9},\quad f_1(x) = 3x^5 + 2x^2,\quad f_2(x) = 9x^4 + x.\end{equation*}
The leading principal minors of the matrix 
\begin{equation*}
H_3^{(01)} = \begin{pmatrix} 3 & 2 & 0 & 0 & \cdots \\
1 & \frac{3}{2} & \frac{1}{9} & 0  & \cdots \\
0 & 3 & 2 & 0 & \cdots\\
 0 & 1 & \frac{3}{2} & \frac{1}{9} & \cdots \\
\vdots & \vdots & \vdots & \vdots & \ddots \\
\end{pmatrix}
 \end{equation*} up to order $4$ are $3$, $5/2$, $4$, and $4/9$. 
The leading principal minors of the matrix 
\begin{equation*}H_3^{(02)} = \begin{pmatrix} 9 & 1 & 0 & 0 & \cdots \\
1 & \frac{3}{2} & \frac{1}{9} & 0 & \cdots \\
0 & 9 & 1 & 0 & \cdots \\
 0 & 1 & \frac{3}{2} & \frac{1}{9} & \cdots \\
\vdots & \vdots & \vdots & \vdots & \ddots 
\end{pmatrix}
 \end{equation*} are $9$, $25/2$, $7/2$, and $7/18$. 
Finally, the leading principal minors of 
\begin{equation*}H_3^{(12)} = \begin{pmatrix} 9 & 1 & 0 & \cdots \\
3 & 2 & 0 & \cdots \\
 0 & 9 & 1 & \cdots  \\
\vdots & \vdots & \vdots &  \ddots
\end{pmatrix}
 \end{equation*} up to order $3$ are $9$, $15$, and $15$.

However, 
$$H_3 \begin{pmatrix} 2 & 3 & 4 \\
1 & 2 & 3 \end{pmatrix} = \begin{vmatrix}3 & 2 & 0 \\
1 & \frac{3}{2} & \frac{1}{9} \\
0 & 9 & 1  \end{vmatrix} = - \dfrac{1}{2} < 0.$$
\end{example}

Nevertheless,  is all matrices $H_M^{(ij)}$ have positive leading principal minors for 
all pairs $(i,j)$, $0 \leq i < j \leq M-1$, then all such matrices are totally nonnegative, and
this was the only condition (rather than the total nonnegativity of the whole matrix)
that we used while proving Theorem~\ref{coolthm}. So, this theorem can also be stated as follows:

\begin{theorem}\label{variationthm} Let $f(x) = a_0x^n + a_1x^{n-1} + \cdots + a_n$ be a polynomial of degree $n$ and $M$ be a positive integer satisfying $2 \leq M \leq \lfloor\frac{n}{2}\rfloor+1$. If all infinite matrices $H_M^{(ij)}$ for $i,j = 0, \ \ldots, \ M-1$, $i < j$, have positive leading principal minors, then $f(z)$ cannot have zeros in the cone $|\arg(z)|\leq \frac{\pi}{M}$.
\end{theorem}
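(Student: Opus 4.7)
The plan is to observe that Theorem~\ref{variationthm} is essentially a repackaging of Theorem~\ref{coolthm}, obtained by re-reading which hypotheses that proof actually uses. In the proof of Theorem~\ref{coolthm}, the total nonnegativity of the full generalized Hurwitz matrix $H_M(f)$ enters only through the single observation ``The Hurwitz matrix for each pair $(p_i,p_j)$, $i<j$, is a submatrix of the generalized Hurwitz matrix $H_M(f)$ and is therefore totally nonnegative.'' All subsequent steps — invoking \cite[Theorem~3.44(1)]{HT} to conclude that each nonzero $p_j$ has only nonpositive real roots and that $p_j/p_0$ is an $R$-function of positive type, translating this into argument bounds on the $f_j(z)$, and deriving a contradiction from inequality~\eqref{conditionpi} — depend only on the total nonnegativity of each individual $H_M^{(ij)}$, and not at all on minors of $H_M(f)$ that lie outside these submatrices.

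So the only thing to establish is that, under the hypothesis of Theorem~\ref{variationthm}, each $H_M^{(ij)}$ is totally nonnegative. This is classical: by construction $H_M^{(ij)}$ is the ordinary Hurwitz matrix of the polynomial $P^{(ij)}$ introduced in Section~\ref{sec:submatrices}, whose degree $m$ is computed in Lemma~\ref{degreeformula}. Positivity of the leading principal minors of $H_M^{(ij)}$ up to order $m$ is exactly the Routh-Hurwitz condition (Theorem~\ref{HurwitzTheorem}) for strict stability of $P^{(ij)}$. Strict stability implies, \emph{a fortiori}, that $P^{(ij)}$ has no zeros in the open right half-plane, which by Theorem~\ref{HurwitzTheoremTN} is equivalent to the total nonnegativity of $H_M^{(ij)}$. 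Once this has been verified for every admissible pair $(i,j)$, the remainder of the proof of Theorem~\ref{coolthm} applies verbatim and yields the desired conclusion.

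The main obstacle, which I expect to be minor, is bookkeeping: one must confirm that the positivity assumption on the leading principal minors of each $H_M^{(ij)}$ supplies enough minors to invoke Routh-Hurwitz up to the true degree $m = \deg P^{(ij)}$ — that is, that the positive minors we have at our disposal include all leading principal minors of $H_M^{(ij)}$ up to that order — and relatedly, that the degree restriction $M \leq \lfloor n/2 \rfloor + 1$ is exactly what is needed to guarantee the pairs $(i,j)$ produce nondegenerate $P^{(ij)}$, so that ``leading principal minors up to order $m$'' is a meaningful notion uniformly over $i<j$. Since Lemma~\ref{degreeformula} gives an explicit closed form for $m$ in terms of $n, i, j, M$, this verification amounts to case-checking against that formula.
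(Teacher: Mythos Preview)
Your proposal is correct and matches the paper's own argument essentially verbatim: the paper states just before Theorem~\ref{variationthm} that positivity of the leading principal minors of every $H_M^{(ij)}$ forces each such matrix to be totally nonnegative, and that this was the only way total nonnegativity of $H_M(f)$ entered the proof of Theorem~\ref{coolthm}. One small caveat on your bookkeeping remark: Lemma~\ref{degreeformula} is stated under the hypothesis that all $h_0,\ldots,h_n$ from the generalized Euclidean algorithm are positive, which is \emph{not} assumed in Theorem~\ref{variationthm}; but you do not actually need that lemma---the identification of $H_M^{(ij)}$ with the ordinary Hurwitz matrix of $P^{(ij)}$ is purely definitional, and the degree of $P^{(ij)}$ can be read off directly from the coefficients $a_i,a_j,a_{M+i},\ldots$, so the Routh--Hurwitz step goes through unconditionally.
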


We note that Theorem~\ref{coolthm}, or Theorem~\ref{variationthm}, also generalize this result  {\cite[Theorem 4.1]{COT}} for $M=n$:
\begin{theorem}[Cowling, Thron] Let $f(x) = a_0x^n + a_1x^{n-1} + \cdots + a_n$ be a polynomial of degree $n$ with all coefficients $a_0, \ldots, a_n$ positive. Then $f(z) = 0$
 implies $|\arg(z)| > \frac{\pi}{n}$.
\end{theorem}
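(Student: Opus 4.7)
The plan is to invoke Theorem~\ref{maintheorem} with $M=n$; since the paper's main result already handles arbitrary $M\leq n$, the Cowling-Thron statement should come out as a direct corollary once we verify the hypothesis in this extreme case. So the entire task reduces to checking that the generalized Euclidean algorithm with step $M=n$, applied to a polynomial with all coefficients positive, produces only positive leading coefficients.

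Unpacking the arithmetic decomposition with $M=n$: the residues $l \bmod n$ for $l=0,1,\ldots,n$ place $l=0$ and $l=n$ in the same class $0$, while each of $l=1,\ldots,n-1$ occupies its own class. Hence the initial polynomials are
\[
f_0(x)=a_0x^n+a_n, \qquad f_j(x)=a_jx^{n-j}\quad(j=1,\ldots,n-1).
\]
Running the single relevant step of the algorithm for $i=0$ gives $f_0=\frac{a_0}{a_1}x\cdot f_1+f_n$, so $f_n=a_n$; no further iterations occur because the algorithm halts at $f_n$. Therefore the full list of leading coefficients is $h_j=a_j$ for $j=0,1,\ldots,n$, all strictly positive by hypothesis.

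With all $h_j>0$, Theorem~\ref{maintheorem} with $M=n$ immediately yields $f(z)=0 \Longrightarrow |\arg z|>\pi/n$, which is exactly Cowling-Thron. (As an alternative route, one can first invoke Theorem~\ref{EuHur} to conclude that $H_n(f)$ is totally nonnegative, and then apply Theorem~\ref{coolthm}; both paths produce the same conclusion.)

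There is essentially no real obstacle here, only bookkeeping: the generalized Euclidean algorithm degenerates to a single trivial step when $M=n$, so the positivity hypothesis on the $h_j$ collapses to the positivity of the $a_j$, which is given. The Cowling-Thron theorem thus appears as a boundary case of the general framework, corresponding to the finest possible arithmetic splitting of $f$ into $n$ (nearly all monomial) parts.
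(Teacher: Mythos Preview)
Your proof is correct and matches the paper's own treatment: the paper does not give a separate proof of Cowling--Thron but simply records that it is the case $M=n$ of the main results (Theorems~\ref{coolthm} and~\ref{variationthm}). Your explicit verification that, for $M=n$, the generalized Euclidean algorithm degenerates to the single step $f_0=\tfrac{a_0}{a_1}x\,f_1+a_n$ and hence $h_j=a_j>0$ is exactly the bookkeeping needed to invoke Theorem~\ref{maintheorem}, which moreover delivers the strict inequality $|\arg z|>\pi/n$ directly.
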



\section{Sufficient conditions for the total positivity of generalized Hurwitz matrices}\label{sec:obstacles}

One last 'natural' question to address is whether we can prove some kind of a converse to our Main Theorem~\ref{maintheorem}. In other words, whether a real polynomial whose zeros lie outside the cone $|\arg(z)|\leq \frac{\pi}{M}$ must have a totally nonnegative generalized Hurwitz matrix $H_M$.
Unfortunately, this happen to be false with a vengeance: even the stronger condition that the polynomial $f$ be stable does not imply that the generalized Hurwitz matrix $H_M$ must be totally nonnegative for $M\neq 2$. Here is an example.

\begin{example} Consider the polynomial $f(x) = x^5 + x^4 + 5x^3 + 2 x^2 + 4x + \frac{1}{2}$.
In this case we have
$$H_2(f) = \begin{pmatrix}1 & 2 & \frac{1}{2} & 0 & 0 & \cdots \\
1 & 5 & 4 & 0 & 0 & \cdots \\
0 & 1 & 2 & \frac{1}{2} & 0 & \cdots \\
0 & 1 & 5 & 4 & 0 & \cdots \\
0 & 0& 1 & 2 & \frac{1}{2} & \cdots\\
\vdots & \vdots & \vdots & \vdots & \vdots & \ddots \\
\end{pmatrix}.$$
The relevant leading principal minors of $H_2(f)$ (i.e., the special minors~\eqref{GoSo1} for $M=2$) are equal to
$3$, $5/2$, $17/4$, and $17/8$.
Thus,  the polynomial $f(x)$ is stable by the Routh-Hurwitz Theorem~\ref{HurwitzTheorem}.

However,
$$H_3\begin{pmatrix} 1 & 2 \\ 2 & 3 \\ \end{pmatrix} =
\begin{vmatrix} 1 & 4 \\ 1 & 2 \end{vmatrix} = -2 < 0.$$
Thus $H_3(f)$ is not totally nonnegative, and some of the coefficients
$\{h_0, \ \ldots, \ h_n\}$ in the generalized Euclidean algorithm with $M=3$ are negative.\end{example}

But in case $M$ is even and $f$ is stable,  the total positivity of $H_M(f)$ does hold!

\begin{theorem} Let a polynomial $f(x) = a_0x^n + a_1x^{n-1}+ \cdots + a_n$ $(a_0, \ a_1,  \ldots, \ a_n \in {\mathbb R}; \ a_0 > 0)$
be stable. Then, for any $M = 2k$ $(k = 1,  \ldots,  \left\lfloor \dfrac{n}{2}\right\rfloor)$,
its generalized Hurwitz matrix $H_M(f)$ is totally nonnegative.
\end{theorem}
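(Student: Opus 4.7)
The plan is to exhibit $H_{2k}(f)$ as a literal submatrix of $H_2(f)$ and then apply the Asner--Kemperman direction of Theorem~\ref{HurwitzTheoremTN} together with the fact that total nonnegativity is inherited by submatrices.

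First I would unpack the defining formula $H_M(f) = (a_{Mj - i})_{i,j \ge 1}$ with the convention $a_l = 0$ for $l<0$ or $l>n$. With $M = 2k$, the $(i,j)$ entry of $H_{2k}(f)$ equals $a_{2kj - i}$, while the $(i, kj)$ entry of $H_2(f)$ equals $a_{2(kj) - i} = a_{2kj - i}$. Thus column $j$ of $H_{2k}(f)$ coincides \emph{exactly} with column $kj$ of $H_2(f)$, and $H_{2k}(f)$ is obtained from $H_2(f)$ by keeping every row and selecting the columns indexed by positive multiples of $k$. This submatrix identification is the crux of the argument and uses the parity hypothesis $M = 2k$ in an essential way: the column shift by $M$ positions in $H_M$ can only be realized as an integer column shift in $H_2$ when $M$ is even.

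Next, since $f$ is stable, it has no zeros in the open right half-plane, so by Theorem~\ref{HurwitzTheoremTN} the ordinary Hurwitz matrix $H_2(f)$ is totally nonnegative. Every minor of $H_{2k}(f)$ is, via the identification above, also a minor of $H_2(f)$ (with the column indices in $H_2$ being the image of those of $H_{2k}$ under the injection $j \mapsto kj$), hence nonnegative. Therefore $H_{2k}(f)$ is totally nonnegative, as desired.

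There is essentially no serious obstacle here beyond the recognition of this submatrix structure; once the column indexing is written down cleanly, the whole theorem reduces to a bookkeeping observation about the block pattern of $H_M$ combined with the already-established $M = 2$ case from \cite{HT}. One way to appreciate why the parity restriction is essential is to note that for odd $M$ the embedding fails, and indeed the explicit example with $f(x)=x^5+x^4+5x^3+2x^2+4x+\tfrac12$ given above shows that the analogous statement is simply false for $M=3$.
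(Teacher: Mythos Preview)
Your proof is correct and in fact considerably cleaner than the paper's own argument. The key observation that column $j$ of $H_{2k}(f)$ coincides with column $kj$ of $H_2(f)$ (since $a_{(2k)j-i}=a_{2(kj)-i}$) immediately exhibits $H_{2k}(f)$ as the submatrix of $H_2(f)$ obtained by keeping all rows and the columns indexed by multiples of $k$; total nonnegativity of $H_2(f)$ (Theorem~\ref{HurwitzTheoremTN}) then descends to $H_{2k}(f)$ at once.

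The paper proceeds differently: rather than recognizing the global column-submatrix structure, it works at the level of the \emph{special} minors $H_M(j,r)$, writes each of them explicitly as a particular minor of $H_2(f)$ via formula~\eqref{ku14}, and then invokes the strict-positivity criterion~\eqref{ku15} (equivalently Theorem~\ref{Theorem2.2}(c)) to verify each such minor is positive, finally appealing to the Goldman--Sun Theorem~\ref{Theorem2.2} to conclude total nonnegativity. Your column identification $j\mapsto kj$ is in fact the underlying reason \eqref{ku14} holds (one recovers the paper's row/column indices after exploiting the two-step shift invariance $H_2(i+2,j+1)=H_2(i,j)$), so the two arguments are not unrelated; but your formulation bypasses the special-minor machinery and the case analysis on the parity of $j$ entirely. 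What the paper's route buys is slightly more: it shows the special minors of $H_M(f)$ are \emph{strictly} positive, not merely nonnegative, though this extra information is not needed for the stated conclusion.
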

\begin{proof} Let $M = 2k$. Consider all special minors~\eqref{GoSo1}:
\[H_M(j,r)= H_M\begin{pmatrix} j& j+1 & \ldots & j+r-1\\
1& 2 & \ldots & r \end{pmatrix}.\]

One checks directly that
\begin{equation}\label{ku14}
 H_M\begin{pmatrix} j & \ldots & j+r-1\\
1 & \ldots & r \end{pmatrix}  =\left\{\begin{array}{cc}
H_2\begin{pmatrix} 1 & 2 & \ldots & r \\
k-\dfrac{j-1}{2} & 2k - \dfrac{j-1}{2} & \ldots & rk - \dfrac{j-1}{2}\end{pmatrix}
& \mbox{if $j$ is odd} ;\\[10pt]
H_2\begin{pmatrix} 2 & 3 & \ldots & r+1 \\
k-\dfrac{j-2}{2} & 2k - \dfrac{j-2}{2} & \ldots & rk - \dfrac{j-2}{2}\end{pmatrix} & \mbox{if $j$ is even.} \end{array}\right.
\end{equation}
Here $k = \dfrac{M}{2}$, $j = 1, \ldots, M-1$ if $r = 1, 2, \ldots, \left\lceil\dfrac{n}{M-1}\right\rceil - 1$, $j = M-p, \ldots, M-1$ if $r = \left\lceil\dfrac{n}{M-1}\right\rceil$, and $p$ is the remainder after the division of $n$ by $M-1$.

Let us prove that all the minors of $H_2(f)$ defined by Formula \eqref{ku14} are positive. Since $f(x)$ is stable we have that $H_2(f)$ satisfies the conditions of the Routh-Hurwitz Theorem~\ref{HurwitzTheorem} and all its leading principal minors are positive.   Applying Theorem 3.1 of \cite{PINK} or, alternatively, Theorem \ref{Theorem2.2} (c), we get
\begin{equation}\label{ku15}
H_2\begin{pmatrix}i_1 & \ldots & i_r\\
j_1 & \ldots & j_r \end{pmatrix} > 0
\quad \hbox{\rm if and only if} \quad  0 \leq 2j_l - i_l \leq n,  \quad l = 1,  \ldots,  r.
\end{equation}

Now note that
$\min_{1 \leq l \leq r}(2(lk-\dfrac{j-1}{2})-l) =\min_{1 \leq l \leq r}(l(2k-1)- (j-1)) = M-j$
if $j$ is odd.
Likewise, $\min_{1 \leq l \leq r}(2(lk-\dfrac{j-2}{2})-(l+1)) =\min_{1 \leq l \leq r}(l(2k-1)- (j-1)) = M-j$
if $j$ is even.
Next, $\max_{1 \leq l \leq r}(2(lk-\dfrac{j-1}{2})-l) =\max_{1 \leq l \leq r}(l(2k-1)- (j-1)) = r(M-1)-(j-1)$
if $j$ is odd.
Finally, $\max_{1 \leq l \leq r}(2(lk-\dfrac{j-2}{2})-(l+1)) =\max_{1 \leq l \leq r}(l(2k-1)- (j-1)) = r(M-1) - (j-1)$
if $j$ is even.
Since $j = 0, \ \ldots, \ M-1$, and $M-1$ is odd, we have $\min\limits_{1 \leq j \leq M-1}(M-j) = 1$ and $\min\limits_{0 \leq j \leq M-2}(M-j) = 2$.

Since $$\max_{1 \leq r \leq \left\lceil\frac{n}{M-1}\right\rceil}(r(M-1) - (j-1)) = \left\lceil\dfrac{n}{M-1}\right\rceil(M-1) - (j-1),$$
we see that
$$\max_j(\left\lceil\frac{n}{M-1}\right\rceil(M-1) - (j-1)) =\left\lceil\frac{n}{M-1}\right\rceil(M-1) - (M-p-1)= (M-1)(\left\lceil\frac{n}{M-1}\right\rceil - 1) + p = n.$$
Thus all the minors of  the form \eqref{ku14} are mentioned in formulas~\eqref{ku15} among the principal minors of some Hurwitz submatrices above and are therefore guaranteed to be positive. Therefore all special minors of $H_M(f)$ are positive, and hence the matrix $H_M(f)$ is totally nonnegative.
\hfill \qed
\end{proof}

\section*{Acknowledgments} We are grateful to Mikhail Tyaglov for helpful discussions, in particular, for suggesting the idea of the proof of Theorem~\ref{coolthm}, and to Alan Sokal for pointing out its connection with the Aissen-Edrei-Schoenberg-Whitney Theorem. The research leading to these results 
 has received funding from the the European Research Council under the European Union's Seventh
Framework Programme (FP7/2007-2013) / ERC grant agreement ${\rm n}^\circ$ 259173 and from the National Science Foundation under agreement No. DMS-1128155. Any opinions, findings and conclusions or recommendations expressed in this material are those of the authors and do not necessarily reflect the views of the National Science Foundation.

\end{document}